\numberwithin{equation}{section}
\newtheorem{theorem}{Theorem}[section]
\newtheorem{proposition}[theorem]{Proposition}
\newtheorem{lemma}[theorem]{Lemma}
\newtheorem{corollary}[theorem]{Corollary}
\theoremstyle{definition}
\theoremstyle{remark}
\newcommand{\R}{\mathbb{R}}
\newcommand{\Rd}{\mathbb{R}^d}
\newcommand{\N}{\mathbb{N}}
\newcommand{\Z}{\mathbb{Z}}
\DeclareRobustCommand{\rchi}{{\mathpalette\irchi\relax}}
\newcommand{\irchi}[2]{\raisebox{\depth}{$#1\chi$}} 
\renewcommand{\hat}{\widehat}
\newcommand{\eps}{\varepsilon}
\newcommand{\scriptA}{\mathcal{A}}
\newcommand{\scriptE}{\mathcal{E}}
\newcommand{\scriptI}{\mathcal{I}}
\newcommand{\scriptJ}{\mathcal{J}}
\newcommand{\scriptR}{\mathcal{R}}
\newcommand{\scriptS}{\mathcal{S}}
\newcommand{\jp}[1]{\langle{#1}\rangle}
\newcommand{\qtq}[1]{\quad\text{#1}\quad}
\DeclareMathOperator*{\supp}{supp}
\DeclareMathOperator*{\dist}{dist}
\DeclareMathOperator*{\wklim}{wk-lim}
\begin{document}
\title{Sharp Fourier restriction to monomial curves}

\author{Chandan Biswas and Betsy Stovall}
\address{Department of Mathematics, Indian Institute of Technology Bombay, Mumbai 400076}
\email{cbiswas@iitb.ac.in}
\address{Department of Mathematics, University of Wisconsin, Madison, WI 53706}
\email{stovall@math.wisc.edu}

\subjclass[2020]{42B10, 42A38}
\keywords{Sharp Fourier restriction theory, profile decomposition, extremizer}
\date\today
\maketitle
\begin{abstract}
We establish  lower bounds for the operator norms of the Fourier restriction/extension operators associated to monomial curves with affine arclength measure. Furthermore, we prove that the set of all extremizing sequences of such an operator is precompact modulo the operator's symmetry group if and only if the operator norm is strictly larger than this threshold. For the proof, we introduce a number of new ingredients, some of which may be applicable to analogous questions on more general manifolds.
\end{abstract}



\section{Introduction}

In this article, we take a step toward the development of  profile decomposition techniques for $L^p \to L^q$ Fourier extension operators associated to general polynomial curves in $\R^d$.  We treat the case of monomial curves,  extending techniques that had previously only been applied to specific manifolds to a larger class.  We focus here on the case of monomial curves, but we develop several new techniques, which will be applied to the general polynomial case in a sequel.

We begin with a brief historical review of the Fourier restriction problem for polynomial curves.  Let $\gamma :\R \to \Rd$ be a $d$-times continuously differentiable parametrization of a curve. The affine arclength measure along the image of $\gamma$ equals the pushforward by $\gamma$ of the measure $\lambda_\gamma\, dt:= |L_\gamma |^{\frac{2}{d^2 + d}}\, dt$, where $L_\gamma:= \text{det} (\gamma' , \ldots, \gamma^{(d)} )$, referred to here (incorrectly, but conveniently) as the torsion.  There is by now a large body of literature establishing $L^p(\R;\lambda_\gamma\,dt) \to L^q(\R^d)$  bounds for the Fourier extension operator 
$$
\scriptE_\gamma f(x) := \int_\R e^{i x \cdot \gamma (t)}f(t) \lambda_\gamma(t)\, dt, \quad x \in \Rd,
$$
initially defined on (say) smooth, compactly supported $f$, that are uniform over all curves in certain large classes.  For example, the following result is known for the polynomial class.  

\begin{theorem}[\cite{Dendrinos_Wright_2010,Sjolin_74, Stovall16}]\label{T:extn polynomial} 
Let $d \geq 2$, $1 \leq p < \tfrac{d^2+d+2}2$, and set $q:=\tfrac{d^2+d}2 p'$. For each $N$, there exists a constant $C_{d,N,p}$ such that for every polynomial $\gamma:\R \to \R^d$ of degree at most $N$, the corresponding extension operator $\scriptE_\gamma$ extends as a bounded linear operator from $L^p(\R;\lambda_\gamma\,dt)$ to $L^q(\R^d)$, obeying
$$
\|\scriptE_\gamma f\|_{L^q(\R^d)} \leq C_{d,N,p} \|f\|_{L^p(\R;\lambda_\gamma\,dt)}.
$$
\end{theorem}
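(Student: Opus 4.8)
The plan is to prove the uniform extension estimate by reduction to the non-uniform case via a rescaling/decomposition argument that exploits the affine invariance of $\lambda_\gamma$. First I would recall that the affine arclength density $\lambda_\gamma\,dt$ is invariant, as a measure on $\R^d$, under the pushforward by affine maps $x\mapsto Ax+b$ acting on the image of $\gamma$; more precisely, if $A\in GL_d(\R)$ then $\lambda_{A\gamma} = |\det A|^{-2/(d^2+d)}\lambda_\gamma$ pointwise, which exactly cancels the Jacobian factor $|\det A|$ appearing when one changes variables in $\scriptE_{A\gamma}f(x) = \scriptE_\gamma f(A^{\mathsf T}x)$. Hence the operator norm of $\scriptE_\gamma\colon L^p(\lambda_\gamma) \to L^q(\R^d)$ is unchanged under replacing $\gamma$ by $A\gamma + b$, and also (by a scaling computation using $q = \frac{d^2+d}{2}p'$, which is the unique exponent making the two sides transform the same way) under $t\mapsto at+b$ in the domain. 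So it suffices to bound $\|\scriptE_\gamma\|$ for polynomials $\gamma$ normalized to lie in a compact family.

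Next I would decompose $\R$ into a bounded number (depending only on $d$ and $N$) of intervals on each of which the torsion $L_\gamma(t) = \det(\gamma',\dots,\gamma^{(d)})$ — itself a polynomial of degree at most $N' = N'(d,N)$ — is comparable to a single monomial $c|t-t_0|^k$ (or to $c|t|^{-k}$ near infinity), and on which $\gamma$ is "well-curved" in a quantitative sense. This is the classical Dendrinos–Wright type decomposition: one uses that a degree-$N'$ polynomial has at most $N'$ real zeros, so $\R$ splits into $O_{d,N}(1)$ intervals on which $L_\gamma$ has constant sign and, after an affine normalization of that interval to $[0,1]$ or $[1,\infty)$, is essentially $|t|^k$ up to constants uniform in $\gamma$. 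On each such piece one rescales so that $\gamma$ becomes a perturbation, uniform over the class, of a nondegenerate curve, and applies a single (non-uniform) extension estimate for nondegenerate curves — which follows from the classical work of Sjölin and Drury-type arguments, or can be cited as the base case. Summing the $O_{d,N}(1)$ pieces via the triangle inequality in $L^q$ (using $q \geq 1$) and Hölder/embedding in $L^p$ on the domain side gives the claim; one must be slightly careful at the endpoint $p=1$ and at the non-compact pieces $[1,\infty)$, where the affine arclength weight decays and makes the integral converge.

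The main obstacle is the uniformity of the decomposition and of the base estimate: one must show the constants in the "torsion $\sim$ monomial" splitting and in the nondegenerate extension estimate depend only on $(d,N,p)$ and not on the coefficients of $\gamma$, including degenerate limits where leading coefficients vanish or where zeros of $L_\gamma$ collide. The standard way around this is a compactness argument in the space of coefficient vectors together with the observation that the number of intervals is semicontinuous, so degenerate configurations are handled as limits; alternatively one invokes the quantitative polynomial-geometry lemmas (uniform over degree-$N'$ polynomials) from Dendrinos–Wright and Stovall. Gluing the pieces back together also requires controlling the overlap in the $L^q(\R^d)$ norm — here one does not have orthogonality, so one simply pays the constant $O_{d,N}(1)$ from the triangle inequality, which is harmless. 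I would organize the writeup as: (i) affine invariance lemma; (ii) polynomial decomposition lemma with uniform constants; (iii) the base case for nondegenerate curves; (iv) assembly.
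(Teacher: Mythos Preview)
The paper does not prove this theorem; it is quoted from the cited references and used as a black box, so there is no proof here to compare against directly. That said, pieces of the argument from those references reappear in the paper's Section~\ref{S:chips} (in the proof of Proposition~\ref{P:positive bound extn} and Lemma~\ref{L:reduce dyadic}), and your outline correctly identifies the broad architecture: affine invariance, the Dendrinos--Wright decomposition of $\R$ into $O_{d,N}(1)$ intervals on which the geometric inequality \eqref{E:geom ineq} holds, and a normalization step.

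There is, however, a real gap in your steps (iii)--(iv). On each Dendrinos--Wright interval the torsion is only comparable to a monomial $c|t-t_0|^k$, which \emph{vanishes} at $t_0$; you cannot rescale the whole interval to make $\gamma$ a uniform perturbation of a fixed nondegenerate curve, because the curve genuinely degenerates near $t_0$. What is actually done (visible in the paper around Lemma~\ref{L:reduce dyadic} and the reduction to \eqref{E:moment plus err}) is a further decomposition of each such interval into \emph{infinitely many} dyadic pieces $A_{j,k}=\{|L_\gamma|\sim 2^k\}$; each of those can be rescaled and affinely normalized to a small perturbation of the moment curve, and then one must reassemble across $k\in\Z$. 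That reassembly over infinitely many scales cannot be done by the triangle inequality alone --- one needs quantitative almost-orthogonality between well-separated dyadic scales, which in \cite{Stovall16} (and in Lemma~\ref{L:reduce dyadic} here) comes from a multilinear estimate with exponential gain in the scale gap. Your ``pay the $O_{d,N}(1)$ constant from the triangle inequality'' handles the outer finite decomposition but not this inner infinite one, and the compactness-in-coefficient-space alternative you float does not supply the missing orthogonality either.
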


Though the affine arclength measure is parametrization independent, the current proofs establish bounds that depend on (say) the polynomial degree of the parametrization ($C_{d, N, p} \to \infty$ in Theorem~\ref{T:extn polynomial} as $N \to \infty$). It is thus of interest to identify the geometric features that may drive up the operator norm as one expands the class of curves under consideration (say, by allowing the polynomial degree to increase), and additionally, to determine whether maximizers exist. We address these questions by examining the behaviour of extremizing sequences (those that saturate the operator norm), obtaining lower bounds for the $L^p \to L^q$ operator norms of the Fourier restriction operator for all values of $p$, and implications of such bounds for the question of existence of extremizers and convergence of extremizing sequences. These lower bounds depend only on the parity of the monomial degrees, and not on their magnitude.

A first step toward such a result was taken in \cite{BiswasStovall20}, in which the authors developed concentration compactness methods for $L^p \to L^q$ Fourier restriction/extension associated to the moment curve. In this article, we take the natural next step by extending those results to monomial curves, for which it was proved by Drury--Marshall \cite{drury1987fourier} and Bak--Oberlin--Seeger \cite{BOS2009-AJM} that $\scriptE_\gamma$ extends as a bounded linear operator from $L^p(\R;\lambda_\gamma\,dt) \to L^q(\R^d)$ for $(p,q)$ in the same range as for the moment curve, that is, for $q = \tfrac{d^2 + d}2 p' > \tfrac{d^2 + d + 2}2$.  Moreover, Bak--Oberlin--Seeger \cite{BOS2009-AJM, BOS2013-Crelle} established degree-independent upper bounds for the operator norms. 

To obtain our result for monomial curves, we introduce a number of new ingredients. Most notably, we introduce a framework for analyzing a sequence that blows up simultaneously at two ``antipodal'' points on a general monomial curve. By contrast, earlier results are mostly limited to specific model hypersurfaces, and largely address the $p=2$ case.  (Examples include the cone \cite{BahouriGerard99, KenigMerle08, Negro22}, the paraboloid and higher-order variants  \cite{BegoutVargas07, BrocchiSilvaRene20, CarlesKeraani07, Foschi15, HundertmarkZharnitsk06,  KenigMerle06, KillipTaoVisan08, KillipVisanZhang07, Kunze03, MerleVega98, QuilodranSilva16, JiangPausaderShao10, JiangShaoStovall14}, the cubic/KdV curve in $\R^2$ \cite{FarahVersieux18, FrankSabin18, KenigPonceVega2000,  Pigott16, Shao09}, elliptic hyperboloid \cite{KillipStovallVisan12}, and sphere~\cite{CarneiroFoschiSilvaThiele17, FlockStovall22, Quilodran22, Shao16}.  See   \cite{FoschiSilva17} or \cite{NegroSilvaThielesurvey2022} for a more comprehensive discussion of the recent literature on hypersurfaces.) In developing these results, one challenge posed by monomials and more general polynomial curves, as opposed to hypersurfaces, is the anisotropic decay of the Fourier transform of the associated measures. Thus, in the case of curves and other higher co-dimension manifolds, many arguments that are robust for (say) elliptic hypersurfaces are sensitive to perturbations of the underlying manifold. For example, a rescaled concentrating sequence may not have a fixed dominating function that lies in the $L^q$ space of interest.

For concreteness, we consider $\gamma$ of the form
\begin{equation}\label{assumptions on gamma}
    \begin{gathered}
\gamma(t) = \bigl(\tfrac{t^{l_1}}{l_1!}, \ldots, \tfrac{t^{l_d}}{l_d!}\bigr), \qquad 1 \leq l_1 < \cdots < l_d, 
\end{gathered}
\end{equation}
where $\vec l_\gamma := (l_1, \ldots, l_d) \in \N^d$ and $|\vec l_\gamma| :=\sum_{i = 1}^d l_i > \tfrac{d^2 + d}2$.  For $(p,q)$ in the range of boundedness, $q=\tfrac{d(d+1)}2 p' > p$, we denote the corresponding operator norm for the restriction/extension operators by
\begin{equation}\label{define B_gamma, p}
B_{\gamma, p} := \|\scriptE_\gamma\|_{L^p(\lambda_\gamma) \to L^q}.
\end{equation}
We recall that $B_{\gamma,p} = B_{A\gamma,p}$ for any invertible affine mapping $A$ of $\R^d$. We are interested in the question of when there exist nonzero functions $f$ such that $\|\scriptE_\gamma f \|_q= B_{\gamma, p}  \|f\|_{L^p (\lambda_\gamma)}$. We call such functions extremizers of $\scriptE_\gamma : L^p(\lambda_\gamma) \to L^q$. Additionally, we are interested in the behavior of normalized ($\|f_n\|_{L^p(\lambda_\gamma)} \equiv 1$) sequences $\{f_n\}$ that are extremizing in the sense that $\lim_n \|\scriptE_\gamma f_n\|_q  = B_{\gamma, p}$.

One obstruction to precompactness of such sequences is the existence of a noncompact symmetry group for $\scriptE_\gamma$.  By a symmetry of $\scriptE_\gamma:L^p(\lambda_\gamma) \to L^q$, we mean an isometry $S$ of $L^p(\lambda_\gamma)$ for which there exists an isometry $T_S$ of $L^q$ such that $\scriptE_\gamma \circ S = T_S \circ \scriptE_\gamma$.  Two symmetries in particular, scaling:
\begin{equation}\label{E : def scaling}
\scriptE_\gamma (|\delta|^{\frac{-2 |\vec l|}{(d^2 + d) p}} f(\tfrac{\cdot}{\delta})) (x) = |\delta|^{ \frac {|\vec l|}q} \scriptE_\gamma f(D_\delta^{\vec l} x), \quad D_\delta^{\vec l} x :=(\delta^{l_j}x_j)_{j=1}^d, 
\end{equation}
for $x \in \R^d$ and $\delta \neq 0$, where $\vec l:=\vec l_\gamma$, and modulation, 
$
\scriptE_\gamma(e^{i x_0 \cdot \gamma} f) (\cdot)= \scriptE_\gamma f (\cdot + x_0),
$
as well as the group they generate, are central to our analysis.  (We note that the dilations include time reversal, corresponding to dilation by $\delta = -1$.)  

In contrast with the moment curve,
$$
\gamma_0 (t) := (t, \tfrac{t^2}{2!}, \ldots, \tfrac{t^d}{d!}), \quad  t \in \R, \qquad \vec l_0:= (1,\ldots,d),
$$
higher order monomial curves lack a translation symmetry, and correspondingly lack a true dilation symmetry about points other than zero. 
However, sufficiently near $t_0 \neq 0$, we may approximate $\gamma$ by an affine copy of $\gamma_0$, leading to an approximate scaling symmetry:  
\begin{gather*}
\scriptE_\gamma(\delta^{\frac{-1}p}f(\tfrac{\cdot-t_0}\delta))(x)
=
\delta^{\frac1{p'}} e^{ix\cdot\gamma(t_0)} \scriptE_{\gamma_0+o(1)}((\lambda_\gamma(t_0)+o(1))f)(D_\delta \, T_\gamma (t_0)^t\, x),
\end{gather*}
where $T_\gamma (t_0)$ denotes the torsion matrix,
\begin{equation} \label{E:def T}
T_\gamma (t_0) := (\gamma'(t_0), \ldots, \gamma^{(d)}(t_0)), \qquad D_\delta : = D^{\vec{l_0}}_\delta,
\end{equation}
and the $o(1)$ terms tend to 0 locally uniformly as $\delta \searrow 0$.

By analyzing these approximate symmetries, we will deduce the lower bound for $B_{\gamma,p}$ given in the following theorem.  However, we first need a bit of additional notation.  For $1 < p, q < \infty$, let $\psi_{p, q} : [0, 1] \to \R$ denote the function
\begin{equation}\label{E : define Psi_p,q}
\psi_{p, q}(t) :=  {\frac1{2\pi} \int_0^{2\pi}\bigl|1+e^{i\theta}t\bigr|^q d\theta}{ \bigl(1+t^p)^{-q/p}}, \qquad \Psi_{p,q}:=\sup_{t \in [0,1]}\psi_{p,q}(t).
\end{equation}
By the dominated convergence theorem, $\psi_{p, q}$ is continuous, so $\Psi_{p,q}$ is a maximum.

\begin{theorem}[\textbf{Lower bound}]\label{T : conc energy lower bound}
Let $q = \frac {d^2 + d}2 p'> \frac {d^2 + d + 2}2$ and $\gamma : \R \to \Rd$ be a monomial. Then
\begin{equation} \label{E:Bconc}
B_{\gamma, p} 
\geq B_{\gamma,p}^{\rm{conc}}
:= B_{\gamma_0,p} \times 
\begin{cases}
\Psi_{p,q}^{1/q}, & \text{if each} \,\, l_i \,\,\text{is odd} 
\\
2^{1/p'}, & \text{if each $l_i$ is even}
\\
1, & \text{otherwise}.
\end{cases}
\end{equation}
Additionally, if $B_{\gamma,p} = B_{\gamma,p}^{\rm{conc}}$, and $f,g$ are extremizers for $\scriptE_{\gamma_0}:L^p \to L^q$ with $|\scriptE_{\gamma_0}f| = |\scriptE_{\gamma_0}g|$, a.e., then the following family of functions is extremizing for $\scriptE_\gamma:L^p(\lambda_\gamma) \to L^q$, as $\delta \searrow 0$\emph{:}
\begin{equation} \label{E:fdelta}
f_\delta(t):=
\begin{cases}
[\delta^{- \frac1p}f(\tfrac{t-1}\delta) + \alpha \delta^{-\frac1p}\overline{g(\tfrac{-(t+1)}\delta)}]\rchi_{\{|t| \leq 2\}}, \qquad & \text{if each} \,\, l_i \,\,\text{is odd,} 
\\
[\delta^{-\frac1p}f(\tfrac{t-1}\delta) + \delta^{-\frac1p} f(\tfrac{-(t+1)}\delta)] \rchi_{\{|t| \leq 2\}}, & \text{if each} \,\, l_i \,\,\text{is even,}\\
\delta^{- \frac1p}f(\tfrac{t-1}\delta) \rchi_{\{|t| \leq 2\}}, & \text{otherwise}.
\end{cases}
\end{equation}
Here $\alpha \in [0,1]$ is a maximum point for $\psi_{p,q}$.  
\end{theorem}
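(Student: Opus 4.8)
The plan is to establish both assertions by analyzing the family $\{f_\delta\}$ from \eqref{E:fdelta}, which places one or two bumps of width $\sim\delta$ near $t=1$ and $t=-1$. The lower bound \eqref{E:Bconc} follows by computing $\lim_{\delta\searrow0}\|\scriptE_\gamma f_\delta\|_q/\|f_\delta\|_{L^p(\lambda_\gamma)}$ when the input comes from a near-extremizing sequence for $\scriptE_{\gamma_0}$ (with $g=f$ in the odd case), and the ``additionally'' clause follows by running the same computation with genuine extremizers and invoking $B_{\gamma,p}=B_{\gamma,p}^{\rm conc}$. First I would record the elementary facts used throughout: $L_\gamma(t)=c_{\vec l}\,t^{|\vec l|-\binom{d+1}2}$ with $c_{\vec l}\neq0$, so the torsion matrices $T_\gamma(\pm1)$ are invertible and $\lambda_\gamma(1)=\lambda_\gamma(-1)=:\lambda_0>0$ with $|c_{\vec l}|=\lambda_0^{\binom{d+1}2}$; $\lambda_{\gamma_0}\equiv1$; and $\gamma(-u)=-\gamma(u)$, $\gamma(u)$, or neither according to whether all $l_i$ are odd, all even, or mixed, with the companion identities $T_\gamma(-1)=-T_\gamma(1)D_{-1}^{\vec l_0}$ (all $l_i$ odd) and $T_\gamma(-1)=T_\gamma(1)D_{-1}^{\vec l_0}$ (all $l_i$ even). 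The $L^p(\lambda_\gamma)$ side is then immediate: for small $\delta$ the bumps have disjoint supports inside $\{|t|\leq2\}$, and dominated convergence gives $\|f_\delta\|_{L^p(\lambda_\gamma)}^p\to\lambda_0(\|f\|_p^p+\alpha^p\|g\|_p^p)$, $2\lambda_0\|f\|_p^p$, or $\lambda_0\|f\|_p^p$ in the three cases.

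For the $L^q$ side I would use the approximate scaling symmetry from the introduction to zoom in at $t_0=1$, substituting $x=(T_\gamma(1)^t)^{-1}D_\delta^{-1}\eta$; the Jacobian is $|c_{\vec l}|^{-1}\delta^{-\binom{d+1}2}$, and since $q/p'=\binom{d+1}2$ all powers of $\delta$ cancel in the end. After Taylor expanding $\gamma$ at $1$ to order $d$ (the higher-order terms produce phase errors that are $o(1)$ locally uniformly), the bump at $1$ contributes $\delta^{1/p'}e^{i\eta\cdot u_\delta}\bigl(\lambda_0\,\scriptE_{\gamma_0}f(\eta)+o(1)\bigr)$, where $u_\delta:=D_\delta^{-1}\bigl(T_\gamma(1)^{-1}\gamma(1)\bigr)$. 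In the even case $\gamma(-1)=\gamma(1)$, so unwinding the reflection shows the second bump contributes the \emph{same} expression; the pieces add coherently, $\|\scriptE_\gamma f_\delta\|_q^q\to2^q\lambda_0^{q-\binom{d+1}2}\|\scriptE_{\gamma_0}f\|_q^q$, and dividing by the $L^p$ side — where $\beta=1$ is the optimal amplitude ratio by H\"older, which is why no parameter appears — yields $\lim=2^{1/p'}\|\scriptE_{\gamma_0}f\|_q/\|f\|_p$. In the mixed case a single bump already gives $\lim=\|\scriptE_{\gamma_0}f\|_q/\|f\|_p$, and a second bump would only inflate the $L^p$ cost, since after the same rescaling its phase oscillates in the integration variable and its $L^q$ contribution vanishes by Riemann--Lebesgue; this is why only one bump is used there.

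The odd case carries the new phenomenon. Here $\gamma(-1)=-\gamma(1)$, and unwinding the conjugate-and-reflect in \eqref{E:fdelta} shows the second bump contributes $\alpha\,\delta^{1/p'}e^{-i\eta\cdot u_\delta}\bigl(\lambda_0\,\overline{\scriptE_{\gamma_0}g(\eta)}+o(1)\bigr)$, so $\|\scriptE_\gamma f_\delta\|_q^q$ tends to $\lambda_0^{q-\binom{d+1}2}\lim_\delta\int_{\Rd}\bigl|\scriptE_{\gamma_0}f(\eta)+\alpha\,e^{-2i\eta\cdot u_\delta}\,\overline{\scriptE_{\gamma_0}g(\eta)}\bigr|^q\,d\eta$. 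Since $\gamma(1)\neq0$ we have $T_\gamma(1)^{-1}\gamma(1)\neq0$, hence $|u_\delta|\to\infty$; an equidistribution argument — the linear functional $\eta\mapsto\eta\cdot u_\delta$ equidistributes mod $2\pi$ as $\delta\searrow0$ because the pushforward of Lebesgue measure by a nonzero linear functional has no atoms — then replaces the inner limit by $\frac1{2\pi}\int_0^{2\pi}\int_{\Rd}|\scriptE_{\gamma_0}f+e^{i\theta}\alpha\,\overline{\scriptE_{\gamma_0}g}|^q\,d\eta\,d\theta$. Writing $\scriptE_{\gamma_0}f=|\scriptE_{\gamma_0}f|\,e^{i\phi}$, using $|\scriptE_{\gamma_0}g|=|\scriptE_{\gamma_0}f|$ a.e., and using the $\theta$-translation invariance of $\int_0^{2\pi}|1+\alpha e^{i\theta}|^q\,d\theta=\psi_{p,q}(\alpha)(1+\alpha^p)^{q/p}$, this becomes $\lambda_0^{q-\binom{d+1}2}\psi_{p,q}(\alpha)(1+\alpha^p)^{q/p}\|\scriptE_{\gamma_0}f\|_q^q$; dividing by the $L^p$ side cancels $\lambda_0$ and the factor $1+\alpha^p$, leaving $\lim=\psi_{p,q}(\alpha)^{1/q}\|\scriptE_{\gamma_0}f\|_q/\|f\|_p$. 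Taking $\alpha$ a maximizer of $\psi_{p,q}$ on $[0,1]$ (the identity $\psi_{p,q}(t)=\psi_{p,q}(1/t)$ shows this is also its maximum over $(0,\infty)$, with value $\Psi_{p,q}$) and $f$ near-extremizing with $g=f$ gives $B_{\gamma,p}\geq\Psi_{p,q}^{1/q}B_{\gamma_0,p}$, proving \eqref{E:Bconc}. For the final claim, with $f,g$ genuine extremizers satisfying $|\scriptE_{\gamma_0}f|=|\scriptE_{\gamma_0}g|$ all three limits become equalities with $\|\scriptE_{\gamma_0}f\|_q/\|f\|_p=B_{\gamma_0,p}$, so $\|\scriptE_\gamma f_\delta\|_q/\|f_\delta\|_{L^p(\lambda_\gamma)}\to B_{\gamma,p}^{\rm conc}=B_{\gamma,p}$, i.e.\ $\{f_\delta\}$ is extremizing.

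The main obstacle is upgrading the pointwise and locally-uniform statements above to honest $L^q(\Rd)$ convergence. Two points need care: controlling the Taylor remainder of the phase and the $o(1)$ terms of the approximate symmetry uniformly enough over the non-compact frequency variable $\eta$ — here one uses $\delta$-uniform extension estimates (in the spirit of Theorem~\ref{T:extn polynomial}) for the rescaled polynomial curves, together with a Brezis--Lieb / uniform-integrability argument, or a reduction to Schwartz $f,g$ by density, to pass from a.e.\ convergence with uniform $L^q$ bounds to convergence of norms; and making rigorous the equidistribution of $e^{-2i\eta\cdot u_\delta}$ against the merely $L^q$ profile $|\scriptE_{\gamma_0}f+e^{i\theta}\alpha\,\overline{\scriptE_{\gamma_0}g}|^q$, which one does by truncating to a large ball, approximating the profile by continuous functions, and applying Riemann--Lebesgue along the (absolutely continuous) projection of Lebesgue measure onto the direction $u_\delta/|u_\delta|$.
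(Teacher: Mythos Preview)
Your approach is correct and essentially the same as the paper's: concentrate an $\scriptE_{\gamma_0}$-extremizer (or near-extremizer) at $\pm1$, rescale via the approximate dilation \eqref{E : scriptE(a, delta) preserves L^q}, and compute the limiting ratio, with the odd case handled by an equidistribution argument for the relative phase. The two obstacles you flag in your final paragraph are exactly what the paper isolates as standalone lemmas---Lemma~\ref{L:op convergence} (for bounded compactly supported $f$, $\scriptE_{\gamma_n}f\to\scriptE_{\gamma}f$ in $L^q$, proved by first getting uniform convergence via stationary phase and then interpolating against the uniform $L^{q_0}$ bound from Theorem~\ref{T:extn polynomial} for some $q_0<q$) and Lemma~\ref{L : limiting constant} (the oscillatory limit, obtained by invoking a two-scale convergence lemma of Allaire to produce the $\theta$-average you wrote down, followed by the sharp H\"older/Minkowski analysis that yields $\Psi_{p,q}$ and its equality case).
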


We recall from~\cite{BiswasStovall20} that there exists an extremizer of $\scriptE_{\gamma_0} : L^p \to L^q$ and note that sequences of the form $\{f_\delta\}_{\delta \searrow 0}$ are not precompact modulo symmetries of $\scriptE_\gamma$. Conversely, we will show that, after application of suitable symmetries, any extremizing sequence must contain a subsequence that either converges in norm or that concentrates at $1$ or $2$ points (depending on the parity of the entries of $\vec l_\gamma$). When $B_{\gamma,p} > B_{\gamma,p}^{\rm{conc}}$, we show that the latter case is not possible, leading to the following result.

\begin{theorem}[\textbf{Main result}]\label{T : existence}
Let $q = \frac {d^2 + d}2 p'> \frac {d^2 + d + 2}2$ and let $\gamma : \R \to \Rd$ be a monomial. Then for each $L^p(\lambda_\gamma)$-normalized extremizing sequence $\{f_n\}$ of $\scriptE_\gamma:L^p(\lambda_\gamma) \to L^q$, there exists a sequence of symmetries $\{S_n\}$ of $\scriptE_\gamma$ such that along a subsequence, either $\{S_n f_n\}$ converges in $L^p(\lambda_\gamma)$ to an extremizer, or there exists a sequence $\delta_n \searrow 0$ such that $\|S_n f_n - f_{\delta_n}\|_{L^p(\lambda_\gamma)} \to 0$, with $f_{\delta_n}$ as in \eqref{E:fdelta}, where $f,g$ are $\scriptE_{\gamma_0}$ extremizers with $|\scriptE_{\gamma_0}f| = |\scriptE_{\gamma_0}g|$, a.e on $\Rd$. The latter case can occur if and only if $B_{\gamma,p} = B_{\gamma,p}^{\rm{conc}}$.  
\end{theorem}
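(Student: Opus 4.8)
The plan is to reduce Theorem~\ref{T : existence} to a profile decomposition for $L^p(\lambda_\gamma)$-bounded sequences adapted to the (approximate) symmetries of $\scriptE_\gamma$, combined with an $L^q$-decoupling of the corresponding extensions. After passing to a subsequence, I would write a normalized extremizing sequence $\{f_n\}$ as $f_n=\sum_{j=1}^J S_n^j\phi^j+r_n^J$, where each $\{S_n^j\}$ is a sequence of symmetries or \emph{approximate} symmetries of $\scriptE_\gamma$ -- built from the scalings \eqref{E : def scaling}, the modulations, and the approximate scalings about a point $t_0\neq 0$ from the introduction, in the last case possibly composed with the reflection $t\mapsto -t$ at the antipodal point $-t_0$; the frames $\{S_n^j\}$ and $\{S_n^k\}$ are pairwise asymptotically orthogonal for $j\neq k$; the profiles $\phi^j$ arise as weak limits of $(S_n^j)^{-1}f_n$ in the relevant space ($L^p(\lambda_\gamma)$ if the frame returns to $\gamma$, including the case where an origin concentration is undone by \eqref{E : def scaling}; Lebesgue $L^p(\R)$ if it converges to the moment-curve model $\scriptE_{\gamma_0}$); and $\limsup_n\|\scriptE_\gamma r_n^J\|_q\to 0$ as $J\to\infty$. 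Two points demand care and reflect the difficulties flagged in the introduction: the anisotropic decay of the Fourier transform of affine arclength measure prevents a single $L^q$-dominating function for rescaled concentrating pieces, so the extraction of profiles and the control of the remainder must be carried out directly; and two bubbles concentrating at antipodal points $\pm t_0$ at a common scale fail to be asymptotically orthogonal at the level of the extensions, so such a pair must be carried as a single profile object, with its interaction tracked explicitly.

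With the decomposition in hand, I would establish the two orthogonality relations. On the domain side, since the frames are isometries of $L^p(\lambda_\gamma)$ and asymptotically orthogonal, $1=\|f_n\|_{L^p(\lambda_\gamma)}^p=\sum_g m_g+\liminf_n\|r_n^J\|_{L^p(\lambda_\gamma)}^p+o_n(1)$, where $g$ runs over profiles (an antipodal pair contributing the sum $m_g$ of its two masses). On the extension side one proves the $L^q$-decoupling $B_{\gamma,p}^q=\lim_n\|\scriptE_\gamma f_n\|_q^q=\sum_g I_g+o_J(1)$, the cross terms between distinct groups vanishing because their extensions separate in physical space (via the modulation parameters), in scale, or in concentration location. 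A single bubble at $t_0\neq 0$ has $I_g=\|\scriptE_{\gamma_0}\phi^g\|_q^q\le B_{\gamma_0,p}^q m_g^{q/p}$; a group whose frame returns to $\scriptE_\gamma$ has $I_g\le B_{\gamma,p}^q m_g^{q/p}$ with equality iff $\phi^g$ extremizes $\scriptE_\gamma$; and for an antipodal pair, the phase-averaging computation underlying Theorem~\ref{T : conc energy lower bound} -- using $\tfrac1{2\pi}\int_0^{2\pi}|a+e^{i\theta}b|^q\,d\theta\le\Psi_{p,q}\bigl(|a|^p+|b|^p\bigr)^{q/p}$ (immediate from the definition of $\Psi_{p,q}$), Minkowski in $L^{q/p}$, and the bound for $\scriptE_{\gamma_0}$, with the two extensions coinciding exactly (no phase average, bound via the triangle inequality and H\"older) in the all-even case -- gives $I_g\le (B_{\gamma,p}^{\rm{conc}})^q m_g^{q/p}$. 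Since $B_{\gamma_0,p}\le B_{\gamma,p}^{\rm{conc}}\le B_{\gamma,p}$, the second inequality by Theorem~\ref{T : conc energy lower bound}, every group obeys $I_g\le B_{\gamma,p}^q m_g^{q/p}$ with $\sum_g m_g\le 1$. As $q>p$ in our range, $\sum_g m_g^{q/p}\le(\sum_g m_g)^{q/p}\le 1$, so all of these inequalities are forced to be equalities.

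The rigidity extracted from the equality cases finishes the proof. Equality in $\sum_g m_g^{q/p}\le(\sum_g m_g)^{q/p}$ forces a single nontrivial group $g_*$ with $m_{g_*}=1$ and $r_n^J\to 0$ (along a diagonal subsequence), and equality in its own bound forces its constant to equal $B_{\gamma,p}$. If the frame of $g_*$ returns to $\scriptE_\gamma$, then $\phi^{g_*}$ is an extremizer of $\scriptE_\gamma$, and with $S_n:=(S_n^{g_*})^{-1}$ we get $S_nf_n\to\phi^{g_*}$ in $L^p(\lambda_\gamma)$ -- the first alternative. If $g_*$ concentrates, its constant is at most $B_{\gamma,p}^{\rm{conc}}$, so the sandwich forces $B_{\gamma,p}=B_{\gamma,p}^{\rm{conc}}$; moreover the equality cases in H\"older, in Minkowski, and in the estimate for $\psi_{p,q}$ identify $\phi^{g_*}$. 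A single concentrating bubble must, after an origin scaling \eqref{E : def scaling}, be an $\scriptE_{\gamma_0}$-extremizer -- forcing $B_{\gamma_0,p}=B_{\gamma,p}^{\rm{conc}}$, which rules out the all-even case ($2^{1/p'}>1$) and, in the all-odd case, makes $0$ a maximum point of $\psi_{p,q}$; an antipodal pair must be assembled from $\scriptE_{\gamma_0}$-extremizers $f$ and $\alpha g$ with $|\scriptE_{\gamma_0}f|=|\scriptE_{\gamma_0}g|$ a.e.\ and $\alpha$ a maximum point of $\psi_{p,q}$ (respectively $g=f$, $\alpha=1$ in the all-even case), the two bubbles sharing a common scale and a common $|t_0|>0$. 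In each concentrating subcase, composing $S_n^{g_*}$ with a modulation and a scaling from \eqref{E : def scaling} (with $\delta<0$, i.e.\ time reversal, when $t_0<0$) that moves the center to $1$ and, where relevant, the antipodal center to $-1$ yields $\|S_nf_n-f_{\delta_n}\|_{L^p(\lambda_\gamma)}\to 0$ for a suitable $\delta_n\searrow 0$ with $f_{\delta_n}$ as in \eqref{E:fdelta} -- the second alternative. For the stated equivalence: the argument above shows that if the first alternative fails (in particular, if some extremizing sequence is not precompact modulo symmetries) then $B_{\gamma,p}=B_{\gamma,p}^{\rm{conc}}$; conversely, if $B_{\gamma,p}=B_{\gamma,p}^{\rm{conc}}$ then, by Theorem~\ref{T : conc energy lower bound}, $\{f_\delta\}_{\delta\searrow 0}$ is extremizing and genuinely concentrates at $1$ (or at $1$ and $-1$), hence admits no sequence of symmetries of $\scriptE_\gamma$ along which it converges to an extremizer, so the second alternative is realized.

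The main obstacle, I expect, is the profile decomposition together with the within-group bound: producing a decomposition that simultaneously copes with the absence of an $L^q$-dominating function for rescaled concentrating sequences, merges antipodal bubbles into a single profile object, and yields the sharp interaction inequality $I_g\le(B_{\gamma,p}^{\rm{conc}})^q m_g^{q/p}$ with a transparent equality characterization. Everything downstream -- the two orthogonality identities, the optimization over masses, and the rigidity -- is then bookkeeping with H\"older, Minkowski, the properties of $\psi_{p,q}$ recorded before Theorem~\ref{T : conc energy lower bound}, and the existence and structure of $\scriptE_{\gamma_0}$-extremizers from \cite{BiswasStovall20}.
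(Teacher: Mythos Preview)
Your overall strategy---profile decomposition, $L^q$ decoupling of extensions, a mass constraint, convexity to force a single profile, then equality-case rigidity---matches the paper's, and your treatment of antipodal pairs and the interaction bound via $\psi_{p,q}$ lines up with Section~\ref{S:Lp concentration} and Lemma~\ref{L : limiting constant}.

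There is, however, a genuine error in your domain-side identity $1=\sum_g m_g+\liminf_n\|r_n^J\|_{L^p(\lambda_\gamma)}^p+o_n(1)$. When two profiles share the same scale and concentration point and differ only by modulation, their supports on the curve side coincide and no $L^p$ Pythagorean identity holds for $p\neq 2$ (Br\'ezis--Lieb does not apply, since the modulated sequence has no a.e.\ limit). What is true, and what the paper proves as conclusion~(iv) of Proposition~\ref{P : profiles}, is the weaker $\ell^{\tilde p}$ bound $\sum_j\|\phi^j\|_p^{\tilde p}\le\liminf\|f_n\|_p^{\tilde p}$ with $\tilde p=\max(p,p')$. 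Your rigidity step survives this correction because $q>\tilde p$: from $1\le\sum_g m_g^{q/p}\le(\sup_g m_g)^{(q-\tilde p)/p}\sum_g m_g^{\tilde p/p}\le(\sup_g m_g)^{(q-\tilde p)/p}$ one still extracts a single profile of full mass, and then strict convexity of $L^p$ (weak convergence plus convergence of norms implies strong convergence) kills the remainder. This last step, rather than any decoupling identity for the remainder, is how the paper actually concludes in Sections~\ref{S:Lp convergence}--\ref{S:Lp concentration}.

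On implementation, the paper does \emph{not} build a single global profile decomposition over all scales, locations, and modulations as you propose. It first carves $f$ into ``chips'' adapted to intervals via a refined $L^p\to L^q$ inequality (Proposition~\ref{P:positive bound extn}, whose proof requires the quantitative convex-approximation Lemma~\ref{L:convex approx} adapting Steinig's argument), then runs a \emph{localized} profile decomposition on each rescaled chip using only modulations (Proposition~\ref{P : profiles}), and only afterward uses that spatial coherence to show that for an extremizing sequence all chips are commensurable (Proposition~\ref{P:freq loc}, Lemma~\ref{L:no bad k}). The paper explicitly flags this reorganization (opening paragraph of Section~\ref{S:profile}) as a way to avoid the pointwise geometric inequality~\eqref{E:geom ineq}, which can fail for points lying in different intervals of the decomposition~\eqref{decompose R}; your global approach would have to confront exactly this obstruction when proving asymptotic $L^q$-orthogonality of extensions supported on distant pieces of the curve.
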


Let $\scriptR_\gamma$ denote the Fourier restriction operator, 
$
\scriptR_\gamma g(t) := \hat g(\gamma(t))$,  $t \in \R$.  Since $R_\gamma = \scriptE_\gamma^*$, for $r = (\frac {(d^2 + d) s}2)' < 1 + \frac 2{d^2 + d}$, 
$\|\scriptR_\gamma\|_{L^r \to L^s(\lambda_\gamma)} = B_{\gamma, s'}$.

\begin{corollary}
Let $r = (\frac {(d^2 + d) s}2)' < 1 + \frac 2{d^2 + d}$ and $\gamma : \R \to \Rd$ be a monomial. Then $\|\scriptR_\gamma\|_{L^r \to L^s(\lambda_\gamma)} \geq B_{\gamma,s'}^{\rm{conc}}$. Additionally, for $r > 1$, every extremizing sequence of $\scriptR_\gamma : L^r(\Rd) \to L^s(\lambda_\gamma)$ possesses a subsequence that converges, after application of symmetries of $\scriptR_\gamma$, in $L^r$ if and only if $\|\scriptR_\gamma\|_{L^r \to L^s(\lambda_\gamma)} > B_{\gamma,s'}^{\rm{conc}}$.
\end{corollary}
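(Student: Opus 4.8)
The plan is to derive the corollary from Theorems~\ref{T : conc energy lower bound} and~\ref{T : existence} by duality, using $\scriptR_\gamma = \scriptE_\gamma^*$ and the identity $\|\scriptR_\gamma\|_{L^r\to L^s(\lambda_\gamma)} = B_{\gamma,s'}$ recorded above. Write $p := s'$ and $q := r'$, so the hypothesis $r=(\tfrac{(d^2+d)s}2)'<1+\tfrac2{d^2+d}$ becomes $q=\tfrac{d^2+d}2 p'>\tfrac{d^2+d+2}2$, and when $r>1$ (equivalently $q<\infty$, equivalently $p>1$) all of $L^r$, $L^s(\lambda_\gamma)$, $L^p(\lambda_\gamma)$, $L^q$ are reflexive, uniformly convex, and uniformly smooth. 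The lower bound $\|\scriptR_\gamma\|_{L^r\to L^s(\lambda_\gamma)} = B_{\gamma,p}\geq B_{\gamma,p}^{\rm{conc}} = B_{\gamma,s'}^{\rm{conc}}$ is then precisely Theorem~\ref{T : conc energy lower bound}. For the convergence statement assume $r>1$.

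The engine is a dictionary between extremizing sequences of $\scriptR_\gamma$ and of $\scriptE_\gamma$ built from duality maps. For $1<a<\infty$ let $\scriptJ_a$ denote the normalized duality map, sending $0\neq u\in L^a$ to the unique unit vector $\scriptJ_a(u)\in L^{a'}$ with $\mathrm{Re}\,\jp{u,\scriptJ_a(u)} = \|u\|_a$; it is continuous (uniformly so on bounded subsets bounded away from $0$), satisfies $\scriptJ_{a'}\circ\scriptJ_a(u) = u/\|u\|_a$, and obeys $\scriptJ_a(Vu) = (V^*)^{-1}\scriptJ_a(u)$ for every surjective linear isometry $V$ of $L^a$. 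I would establish: \emph{(i)} an $L^r$-normalized sequence $\{g_n\}$ is extremizing for $\scriptR_\gamma$ if and only if $f_n := \scriptJ_s(\scriptR_\gamma g_n)$ is an $L^p(\lambda_\gamma)$-normalized extremizing sequence for $\scriptE_\gamma$, and in that case $g_n = \scriptJ_q(\scriptE_\gamma f_n)+o(1)$ in $L^r$ --- obtained by feeding the adjoint identity $\jp{\scriptE_\gamma f_n,g_n} = \overline{\jp{\scriptR_\gamma g_n,f_n}_{\lambda_\gamma}}$ into $\|\scriptR_\gamma g_n\|_{L^s(\lambda_\gamma)} = \mathrm{Re}\,\jp{\scriptR_\gamma g_n,f_n}_{\lambda_\gamma}\leq\|\scriptE_\gamma f_n\|_q\leq B_{\gamma,p}$ and reading off the saturation of each inequality via uniform convexity; \emph{(ii)} taking adjoints in $\scriptE_\gamma S = T_S\scriptE_\gamma$ shows $(T_S^*)^{-1}$ is a symmetry of $\scriptR_\gamma:L^r\to L^s(\lambda_\gamma)$ with companion $(S^*)^{-1}$, and (arguing in reverse with $\scriptR_\gamma^*=\scriptE_\gamma$) every symmetry of $\scriptR_\gamma$ arises this way, the relation in \emph{(i)} being equivariant under $S\leftrightarrow(T_S^*)^{-1}$ since $\scriptJ_q(\scriptE_\gamma Sf) = (T_S^*)^{-1}\scriptJ_q(\scriptE_\gamma f)$; \emph{(iii)} consequently $\{f_n\}$ converges in $L^p(\lambda_\gamma)$ modulo symmetries of $\scriptE_\gamma$ if and only if $\{g_n\}$ converges in $L^r$ modulo symmetries of $\scriptR_\gamma$ --- one direction is immediate from continuity of $\scriptJ_q$ and boundedness of $\scriptE_\gamma$, while for the other, if $\scriptJ_q(\scriptE_\gamma S_n f_n)$ converges to some $g^*\neq 0$ then $\scriptE_\gamma S_n f_n\to B_{\gamma,p}\scriptJ_r(g^*)$ strongly in $L^q$ (using $\scriptJ_r\circ\scriptJ_q$ and $\|\scriptE_\gamma S_n f_n\|_q\to B_{\gamma,p}$), whereupon the standard uniform-convexity squeeze --- $\|\tfrac12(\scriptE_\gamma S_m f_m+\scriptE_\gamma S_n f_n)\|_q\to B_{\gamma,p}$ forces $\|\tfrac12(S_m f_m+S_n f_n)\|_{L^p(\lambda_\gamma)}\to 1$, hence $\|S_m f_m-S_n f_n\|_{L^p(\lambda_\gamma)}\to 0$ --- gives convergence of $\{S_n f_n\}$.

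With the dictionary in hand the equivalence follows at once. Given an $L^r$-normalized extremizing sequence $\{g_n\}$ of $\scriptR_\gamma$ and the associated extremizing sequence $\{f_n\}$ of $\scriptE_\gamma:L^p(\lambda_\gamma)\to L^q$, Theorem~\ref{T : existence} provides symmetries $S_n$ so that, along a subsequence, either $S_n f_n$ converges to an extremizer in $L^p(\lambda_\gamma)$ --- whence by \emph{(iii)} $\{g_n\}$ converges modulo symmetries of $\scriptR_\gamma$ --- or $\|S_n f_n-f_{\delta_n}\|_{L^p(\lambda_\gamma)}\to 0$ for some $\delta_n\searrow 0$, the latter being possible precisely when $B_{\gamma,p}=B_{\gamma,p}^{\rm{conc}}$. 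Thus if $\|\scriptR_\gamma\|_{L^r\to L^s(\lambda_\gamma)} = B_{\gamma,p}>B_{\gamma,p}^{\rm{conc}}$, every extremizing sequence of $\scriptR_\gamma$ subconverges modulo symmetries. If $B_{\gamma,p}=B_{\gamma,p}^{\rm{conc}}$, pick a sequence $\delta_n\searrow 0$ such that the $L^p(\lambda_\gamma)$-normalizations $f_n$ of $f_{\delta_n}$ form an extremizing sequence of $\scriptE_\gamma$ with no subsequence converging modulo symmetries of $\scriptE_\gamma$ --- possible by the non-precompactness of the family $\{f_\delta\}_{\delta\searrow 0}$ of \eqref{E:fdelta} noted after Theorem~\ref{T : conc energy lower bound}, together with the fact that $\|f_{\delta_n}\|_{L^p(\lambda_\gamma)}$ stays bounded above and away from $0$; then $g_n := \scriptJ_q(\scriptE_\gamma f_n)$ is an extremizing sequence of $\scriptR_\gamma$ with, by \emph{(iii)}, no subsequence converging modulo symmetries, so the convergence property fails.

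The crux --- and the only step beyond routine functional-analytic bookkeeping --- is items \emph{(ii)} and \emph{(iii)}: that symmetries are invertible isometries, so their adjoints are again isometries and the two symmetry groups match bijectively (with companion isometries chosen compatibly with taking inverses), and that convergence modulo symmetries transports across the duality maps in both directions. The latter works here because the relevant duality maps are continuous on bounded subsets bounded away from $0$, and $\|\scriptE_\gamma f_n\|_q$ and $\|\scriptR_\gamma g_n\|_{L^s(\lambda_\gamma)}$ stay near $B_{\gamma,p}>0$ along extremizing sequences; everything else is the standard theory of extremizing sequences in uniformly convex spaces.
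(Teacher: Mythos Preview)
Your proposal is correct and follows essentially the same approach as the paper: the paper omits the proof entirely, stating only that it is ``a direct application of Theorem~\ref{T : existence} and duality, \cite{BiswasStovall20, StovallParab},'' and your argument is precisely a careful execution of that duality transfer via normalized duality maps. The dictionary you build in items (i)--(iii), together with the equivariance $\scriptJ_q(\scriptE_\gamma Sf) = (T_S^*)^{-1}\scriptJ_q(\scriptE_\gamma f)$ and the uniform-convexity squeeze, is exactly what the cited references supply.
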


The (omitted) proof is a direct application of Theorem~\ref{T : existence} and duality,  \cite{BiswasStovall20, StovallParab}.


\section{Outline of the proof}


In Section~\ref{S:lower bound}, we prove the lower bound in Theorem~\ref{T : conc energy lower bound}.  Two results in this section, Lemma~\ref{L:op convergence} and Lemma~\ref{L : limiting constant} will also be of use in later sections and in the (forthcoming) general polynomial case.  Lemma~\ref{L:op convergence} states that extensions of a fixed function with respect to a convergent sequence of (general) polynomial curves converge in $L^p$, under certain natural hypotheses on the function, while Lemma~\ref{L : limiting constant} clarifies the case of equality in an upper bound arising in \cite{FlockStovall22}, which allows us to prove the sharp dichotomy condition of Theorem~\ref{T : existence}, without the gap appearing in the statement of~\cite[Theorem $1.5$]{FlockStovall22}. 

Section~\ref{S:chips} establishes, for $\gamma$ a general polynomial curve, a rough decomposition of an arbitrary $L^p(\lambda_\gamma)$ function $f$ into disjoint ``chips'' which are majorized by $L^p(\lambda_\gamma)$-normalized characteristic functions of intervals (Proposition~\ref{P : uniform dyadic decay}).  We identify the first chip (Proposition~\ref{P:positive bound extn}) using an argument inspired by B\'egout--Vargas's efficient implementation \cite{BegoutVargas07} of the bilinear-to-linear approach (from \cite{TaoVargasVega98}) to extension estimates for the paraboloid. As we saw in \cite{BiswasStovall20}, complications arise in adapting this strategy to curves, due to both geometric considerations in the higher codimension case and algebraic relations among the exponents.  In the general monomial (or polynomial) case, we encounter further difficulties due to deficiencies of the symmetry group of the curve. A particular challenge is in carrying out a Whitney-like decomposition of the $d$-fold sum of the curve into sets that are well-approximated by finitely overlapping convex sets. We adapt an argument of Steinig~\cite{Steinig71}, making it more quantitative (Lemma~\ref{L:convex approx}), to circumvent this difficulty (Lemma~\ref{L:convex approx}). 

In Section~\ref{S:profile}, we decompose a bounded sequence having good frequency localization to a fixed interval on a convergent sequence of curves (such as a rescaled sequence of chips) into a sum of profiles whose extensions possess good spatial localization, together with a manageable error term whose extension is negligible (Proposition~\ref{P : profiles}).  

In Section~\ref{S:chunks}, we sort the rough chips arising in Section~\ref{S:chips} into ``chunks'' comprised of chips localized to (asymptotically) commensurable intervals.  We prove (Proposition~\ref{P:freq loc}) that an extremizing sequence may possess only one chunk by leveraging convexity ($q>p$) and asymptotic orthogonality (Lemma~\ref{L:no bad k}) of incommensurable chips.  Thus an extremizing sequence is well-approximated by functions majorized by (essentially) $L^p(\lambda_\gamma)$-normalized characteristic functions of bounded intervals.  The advantage, which will be even more significant in the general polynomial case, of having previously decomposed our chips into profiles with coherent extensions in Section~\ref{S:profile} is that it permits a soft approach to the asymptotic orthogonality that avoids use of the pointwise geometric inequality \eqref{E:geom ineq}, which is only valid under hypotheses that distant chips may not meet. 

In Section~\ref{S:Lp convergence}, we prove convergence of an extremizing sequence $\{f_n\}$ in the case that the chunks rescale, via the curve's dilation group, to functions adapted to a fixed interval.   

The alternative to convergence is that the chunks rescale, via the curve's dilation group, to a sequence concentrating at 1 or $\{\pm 1\}$, depending on parity.  The phenomenon that for Fourier restriction operators concentration to antipodal points might occur was observed in~\cite{christ2012existence} for the case of restriction to unit sphere in $\R^3$. However, the fact that $q$ is even plays a major role in~\cite{christ2012existence}: it enables one to consider only non-negative antipodally symmetric functions, thereby eliminating the possibility of non-compactness due to modulation. This is not the case for $\scriptE_{\gamma}$. In~\cite{Frank_etl_2016} ($p=2$) and~\cite{FlockStovall22} ($p\neq 2$) this was addressed for the unit sphere. Our geometry is rather different from that of the sphere due to the higher codimension, the lack of compactness of the underlying manifold, and the fact that we consider all monomial curves (rather than a specific hypersurface).  

We believe that many of the techniques from this article may be of use in the study of concentration-compactness phenomena for restriction/extension operators associated to more general manifolds.  Indeed, in a forthcoming article we plan to extend our methods to produce a sharp dichotomy type result for arbitrary polynomial curves. However, the much richer geometry of such curves, including the deficient symmetry group (consisting only of modulations),  the richer near-symmetry group (including near-dilations adapted to several different monomials), and the possibility of multiple (more than two) ``antipodal'' points, leads to new obstructions, whose resolution take us beyond the scope of this article.

\subsection*{Notation} The curve $\gamma$, typically obeying \eqref{assumptions on gamma}, though occasionally a (general) polynomial of degree at most $N$, will be clear from the context; thus, $\vec l :=\vec l_\gamma$ (when $\gamma$ is a monomial) and $T(a):=T_\gamma(a)$, the torsion matrix from \eqref{E:def T} will be understood.  As we have little need to for explicit mention of the true dilations of $\gamma$, we denote $D_\lambda:=D_\lambda^{\vec l_0}$ as in \eqref{E:def T}.  We will abuse notation, conflating the the measure $\lambda_\gamma\,dt$ with its Radon--Nikodym derivative $\lambda_\gamma$; $\int_E \lambda_\gamma(t) \, dt$ is thereby shortened to  $\lambda_\gamma (E)$, for $E \subset \R$.   We use the norms $\|\cdot\|_q:=\|\cdot\|_{L^q(\R^d)}$, $\|\cdot\|_{L^p(\lambda_\gamma)}$, and (occasionally, for brevity)  $\|\cdot\|_p:=\|\cdot\|_{L^p(\R;dt)}$.    

Admissible constants, denoted by $C$ if they are sufficiently large and $c$ if they are sufficiently small, may depend on the dimension $d$, the Lebesgue exponents $p, q$, and the polynomial (monomial, in most cases) degree and may change from line to line. We use the standard notation $A \lesssim B$ to mean $A \leq C B$, where $C$ is admissible. We write $A \sim B$ to denote $A \lesssim B$ and $B \lesssim A$. Dependence of constants on inadmissible parameters will be indicated by additional subscripts.

\subsection*{Acknowledgements}
The first named author is supported by SwarnaJayanti Fellowship grant SB/SJF/2019-20/14 (PI : Apoorva Khare) and by a C V Raman Postdoctoral Fellowship. The second named author is supported in part by NSF grant DMS-1653264 and the Wisconsin Alumni Research Foundation (WARF). Part of the work was carried out when the authors visited Hausdorff Research Institute for Mathematics to participate in the Trimester program ``Interactions between Geometric measure theory, Singular integrals, and PDE" funded by Deutsche Forschungsgemeinschaft (DFG, German Research Foundation) under Germany's Excellence Strategy-EXC-2047/1-390685813 in spring $2022$. We thank the organizers for excellent hospitality. We would also like to thank Ramesh Manna for a number of interesting discussions related to this work. We thank the anonymous referee for valuable comments.


\section{Proof of Theorem~\ref{T : conc energy lower bound}: Lower bound for the operator norm} \label{S:lower bound}


In this section, we will prove Theorem~\ref{T : conc energy lower bound} by rescaling an extremizer for extension from the moment curve to concentrate either at a single point or symmetrically at antipodal points (depending on parity).  We begin with a convergence result for general polynomial curves, which will be of use in what follows.  

\begin{lemma}\label{L:op convergence}
Let $\gamma_n$ be a sequence of polynomials, each of degree at most $N$, and assume that $\gamma_n \to \gamma$ in the sense that the coefficients of the $\gamma_n$ converge to those of $\gamma$. Then for each $q > \frac{d^2 + d + 2}2$ and $f$ bounded and measurable with compact support, $\scriptE_{\gamma_n}f \to \scriptE_\gamma f$ in $L^q$.  
\end{lemma}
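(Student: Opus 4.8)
The plan is to write $\scriptE_{\gamma_n}f - \scriptE_\gamma f$ as a sum of two pieces, one coming from the difference of phases and one from the difference of the densities $\lambda_{\gamma_n}$ versus $\lambda_\gamma$, and to estimate each in $L^q$ using the known extension bound of Theorem~\ref{T:extn polynomial} together with dominated convergence. Since $f$ is bounded with compact support, say $\supp f \subset [-R,R]$, I would first note that $\gamma_n \to \gamma$ with coefficients converging forces $\gamma_n \to \gamma$ in $C^d([-R,R])$, hence $L_{\gamma_n} \to L_\gamma$ uniformly on $[-R,R]$ and therefore $\lambda_{\gamma_n} = |L_{\gamma_n}|^{2/(d^2+d)} \to \lambda_\gamma$ pointwise and in $L^\infty([-R,R])$ (on the set where $L_\gamma \neq 0$; the zero set is negligible and the densities are uniformly bounded there, so it contributes nothing). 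A subtlety is that $\scriptE_\gamma f$ requires $f \lambda_\gamma \in L^{p}$, but $f$ bounded with compact support and $\lambda_\gamma$ bounded on compacts handles this for any $p$ in range; and $q > \tfrac{d^2+d+2}{2}$ ensures we are in the range where Theorem~\ref{T:extn polynomial} applies, with $p$ the dual exponent.

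The key steps, in order: (i) reduce to a fixed compact interval $[-R,R] \supset \supp f$; (ii) write $\scriptE_{\gamma_n} f - \scriptE_\gamma f = \scriptE_{\gamma_n}\big( f(\lambda_{\gamma_n}/\lambda_\gamma)(\tfrac{\lambda_\gamma}{\lambda_{\gamma_n}}\cdot\text{correction})\big)$ — more cleanly, split as
$$
\scriptE_{\gamma_n}f(x) - \scriptE_\gamma f(x) = \int_{-R}^R \big(e^{ix\cdot\gamma_n(t)} - e^{ix\cdot\gamma(t)}\big) f(t)\lambda_{\gamma_n}(t)\,dt + \int_{-R}^R e^{ix\cdot\gamma(t)} f(t)\big(\lambda_{\gamma_n}(t) - \lambda_\gamma(t)\big)\,dt;
$$
(iii) for the second integral, recognize it as $\scriptE_\gamma\big(f\cdot\frac{\lambda_{\gamma_n}-\lambda_\gamma}{\lambda_\gamma}\big)$ (away from the zero set of $L_\gamma$), apply Theorem~\ref{T:extn polynomial} with a degree bound $N$, and use that $\|f(\lambda_{\gamma_n}-\lambda_\gamma)/\lambda_\gamma\|_{L^p(\lambda_\gamma)} \to 0$ by dominated convergence; (iv) for the first integral, the tricky term, approximate: the phase difference has modulus $\min(2, |x\cdot(\gamma_n(t)-\gamma(t))|)$, which does not decay in $x$, so a direct pointwise bound fails. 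Instead I would handle it by writing $\scriptE_{\gamma_n}f - \scriptE_{\gamma}(f\lambda_{\gamma_n}/\lambda_\gamma)$ as an extension operator along $\gamma_n$ of the same density and invoking a continuity/interpolation argument: split $f = f_{\text{smooth}} + (f - f_{\text{smooth}})$ with $f_{\text{smooth}}$ a smooth approximant so that $\|f - f_{\text{smooth}}\|$ is small in the relevant norm (uniformly handled by Theorem~\ref{T:extn polynomial}), and for $f_{\text{smooth}}$ use non-stationary phase / integration by parts or just dominated convergence of $\scriptE_{\gamma_n}f_{\text{smooth}}(x)$ to $\scriptE_\gamma f_{\text{smooth}}(x)$ pointwise combined with a uniform $L^q$ tail bound to upgrade to $L^q$ convergence.

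The main obstacle is precisely step (iv): the phase difference $e^{ix\cdot\gamma_n(t)} - e^{ix\cdot\gamma(t)}$ is not small uniformly in $x$, so one cannot simply pull the limit inside the $L^q$ norm by a crude pointwise estimate. The cleanest route is probably to establish $L^q$ convergence on every bounded ball $|x| \le M$ via dominated convergence (the integrand is bounded, the domain is finite, and pointwise convergence holds), and separately to show the $L^q$ mass of $\scriptE_{\gamma_n}f$ and $\scriptE_\gamma f$ on $|x| > M$ is uniformly small — the latter following from equi-integrability, which one gets from the uniform bound $\|\scriptE_{\gamma_n}f\|_q \le C_{d,N,p}\|f\lambda_{\gamma_n}\|_{L^p}\le C$ together with a density argument approximating $f$ by Schwartz functions whose extensions decay. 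Assembling these two facts gives $\scriptE_{\gamma_n}f \to \scriptE_\gamma f$ in $L^q$.
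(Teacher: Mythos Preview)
Your overall strategy shares ingredients with the paper's proof (density reduction, uniform operator bounds from Theorem~\ref{T:extn polynomial}, local convergence upgraded to $L^q$), but there is a genuine gap in step (iv), specifically in how you propose to control the tails.

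You suggest obtaining uniform smallness of $\int_{|x|>M}|\scriptE_{\gamma_n}f|^q\,dx$ via ``equi-integrability, which one gets from the uniform bound $\|\scriptE_{\gamma_n}f\|_q \le C$ together with a density argument approximating $f$ by Schwartz functions whose extensions decay.'' But a uniform $L^q$ bound alone does not yield uniform tail smallness, and the only pointwise decay available for smooth compactly supported data (via stationary phase on a nondegenerate arc) is $|\scriptE_{\gamma_n}f(x)| \lesssim \langle x\rangle^{-1/d}$. This function lies in $L^q(\R^d)$ only when $q > d^2$, whereas the lemma must cover the full range $q > \tfrac{d^2+d+2}{2}$; for $d\ge 3$ these ranges differ, so there is no common dominating function in $L^q$ and your tail argument breaks down. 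The paper explicitly flags this obstruction in the paragraph preceding its proof (``there is no dominating function lying in many of the $L^q$ spaces of interest'').

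The paper's fix is to avoid $L^q$ tail control entirely. After reducing by density to smooth $f$ with $\lambda_\gamma$ nonvanishing on $\supp f$ (so $\lambda_{\gamma_n}\sim 1$ there for large $n$), the uniform bound $|\scriptE_{\gamma_n}f|\lesssim \langle x\rangle^{-1/d}$ combined with local uniform convergence upgrades to \emph{global} uniform convergence, i.e.\ $\|\scriptE_{\gamma_n}f-\scriptE_\gamma f\|_\infty\to 0$. One then picks $q_0\in(\tfrac{d^2+d+2}{2},q)$, observes $\sup_n\|\scriptE_{\gamma_n}f-\scriptE_\gamma f\|_{q_0}<\infty$ by Theorem~\ref{T:extn polynomial}, and interpolates via H\"older:
\[
\|\scriptE_{\gamma_n}f - \scriptE_\gamma f\|_q \lesssim \|\scriptE_{\gamma_n}f - \scriptE_\gamma f\|_\infty^{1-q_0/q} \to 0.
\]
This sidesteps the need for an $L^q$ dominating function. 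A secondary issue: your step (iii) divides by $\lambda_\gamma$, which is singular at its zeros and can make $\|f(\lambda_{\gamma_n}-\lambda_\gamma)/\lambda_\gamma\|_{L^p(\lambda_\gamma)}$ infinite; the paper's density reduction to $f$ supported where $\lambda_\gamma\ne 0$ resolves this cleanly as well, and in fact renders your phase/density splitting unnecessary.
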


We require that $f$ be bounded with compact support (rather than taking an arbitrary $L^p$ function) because the measure $\lambda_{\gamma_n}$ (and hence $L^p(\lambda_{\gamma_n})$) varies with $n$.  

One difference to note between the case of curves and the case of elliptic hypersurfaces is that, although the Fourier extension of a smooth function on an elliptic hypersurface decays isotropically at a rate that places it in every $L^q$ space into which the extension operator is conjectured to be bounded, smooth functions on curves decay anisotropically. Thus, the decay rate is not stable under perturbations of the curve, so there is no dominating function lying in many of the $L^q$ spaces of interest.

\begin{proof}
Since $\{B_{\gamma_n, p}\}$ is bounded (by Theorem~\ref{T:extn polynomial}), by density arguments it suffices to prove the claimed convergence for smooth $f$ with compact support. 

By the equivalence of all norms on the finite dimensional vector space of polynomials of degree at most $N$, $\gamma_n \to \gamma$ in $C^N([-R,R])$ for every $R>0$.  Therefore, by H\"older's inequality, $\scriptE_{\gamma_n}f \to \scriptE_\gamma f$ locally uniformly.  We will upgrade this local uniform convergence to uniform convergence. 

If $\lambda_\gamma \equiv 0$, uniform convergence is immediate because $\lambda_{\gamma_n} \to 0$ uniformly on the support of $f$.  If $\lambda_\gamma \not\equiv 0$, then $\lambda_\gamma$ vanishes on a finite set, and by density arguments, we may assume that $\lambda_\gamma$ does not vanish on the support of $f$. Thus for $n$ sufficiently large, $\lambda_{\gamma_n} \sim 1$ on the support of $f$. Hence by stationary phase, $|\scriptE_{\gamma_n} f| \lesssim_{f, \gamma} \jp{x}^{- \frac 1d}$, whence the convergence of  $\scriptE_{\gamma_n} f$ to $\scriptE_\gamma f$ is indeed uniform.   

Finally, we pass from uniform to $L^q$ convergence.  Choose some $\frac{d^2+d+2}d < q_0 < q$ and set $\theta := \tfrac{q_0}q$ and $p_0 := (\tfrac{2 q_0}{d^2 + d})'$. Since $\|\scriptE_{\gamma_n} f - \scriptE_\gamma f\|_{q_0}$ is uniformly bounded, by H\"older's inequality and the above uniform convergence,
\begin{align*}
    \|\scriptE_{\gamma_n} f - \scriptE_\gamma f\|_{q} \lesssim_{f, p_0} \|\scriptE_{\gamma_n} f - \scriptE_\gamma f\|_{\infty}^{1 - \theta} \to 0.  
\end{align*}
\end{proof}

The following notation will be useful in constructing certain approximating polynomials.  
Let $\gamma : \R \to \Rd$ be a monomial of degree $N$ and $a \neq 0$. For each positive $\delta$ the polynomial $\gamma_{(a, \delta)} : \R \to \Rd$ is defined by
\begin{align}\label{D : gamma(a, delta)}
\gamma_{(a, \delta)}(t) := D_{\delta^{- 1}} T(a)^{- 1} (\gamma(a + \delta t) - \gamma(a)),
\end{align}
where $T(a)$ is the torsion matrix from \eqref{E:def T}.  Taylor expanding,  
\begin{equation}\label{E : local approx}
\|\gamma_{(a, \delta)} - \gamma_0\|_{C^N[- R, R]} \xrightarrow[\delta \searrow 0]{}0, \qquad a \neq 0,
\end{equation}
while a few simple natural changes of variables and basic linear algebra yield
\begin{equation} \label{E : scriptE(a, delta) preserves L^q}
\begin{gathered}
\scriptE_\gamma f(x) = |\det(T(a)D_\delta)|^{\frac1q} e^{ix\cdot\gamma(a)}\scriptE_{\gamma_{(a, \delta)}}\bigl((\delta\lambda_\gamma(a))^{\frac1p}f(a+\delta \cdot)\bigr)\bigl(D_\delta T(a)^t x\bigr)
\\
\|f\|_{L^p(\lambda_\gamma)} = \bigl\|\bigl(\delta \lambda_\gamma(a)\bigr)^{\frac1p}f(a+\delta\cdot)\bigr\|_{L^p(\lambda_{\gamma_{(a, \delta)}})},
\end{gathered}
\end{equation}
whenever $q = \tfrac{d^2 + d}2 p' > p$.

With these preliminaries in place, we are ready to prove the theorem.

\begin{proof}[Proof of Theorem~\ref{T : conc energy lower bound}]
We begin with the case $\gamma$ neither odd nor even.
Let $f$ with $\|f\|_p = 1$ be an extremizer of $\scriptE_{\gamma_0} : L^p \to L^q$. For each $0 < \delta < R^{-1}$, we define $f_\delta^R \in L^p(\lambda_\gamma)$ by
\begin{equation}\label{weak sequence for even l}
f_\delta^R(t):=\delta^{-1/p}\lambda_\gamma(t)^{-1/p}f\bigl(\tfrac{t-1}\delta\bigr)\rchi_{[-R,R]}\bigl(\tfrac{t-1}\delta\bigr).
\end{equation} 

Since $q = \frac{d^2 + d}{2} p'$ and $\|f_\delta^R\|_{L^p(\lambda_\gamma)} = \|f \rchi_{[- R, R]}\|_p$, by identity~\eqref{E : scriptE(a, delta) preserves L^q} at $a = 1$ and a change of variables
\begin{align}
\|f \rchi_{[- R, R]}\|_p B_{\gamma, p} & \notag \geq \limsup_{\delta \searrow 0} \delta^{- \frac 1{p'}} \|\scriptE_\gamma f_\delta^R(D_{\delta^{- 1}} \cdot)\|_q
\\\notag
& = \lambda_\gamma (1)^{\frac 1p} \limsup_{\delta \searrow 0} \|\scriptE_{\gamma_{(1, \delta)}} \Big(f \rchi_{[- R, R]} \lambda_\gamma(1 + \delta \cdot)^{- \frac 1p}\|_q = \|\scriptE_{\gamma_0} (f \rchi_{[- R, R]})\|_q. 
\end{align}
The final equality follows from Lemma~\ref{L:op convergence}, the uniform convergence~\eqref{E : local approx} at $a = 1$, and the triangle inequality. Letting $R \to \infty$, $f \rchi_{[- R, R]} \to f$, an extremizer of $\scriptE_{\gamma_0}$, so
$$
B_{\gamma, p} \geq B_{\gamma_0, p}.
$$ 

Though $f_\delta^R$ is not quite a truncation of the $f_\delta$ defined in Theorem~\ref{T : conc energy lower bound}; by continuity of $\lambda_\gamma$ and the dominated convergence theorem, we see that
$$
\lim_{R \to \infty} \lim_{\delta \searrow 0} \|\lambda_\gamma(1)^{-1/p}f_\delta - f_\delta^R\|_{L^p(\lambda_\gamma)} = 0.
$$
Since $f_\delta^R$ is extremizing as $\delta \searrow 0$ and $R \to \infty$, $f_\delta$ is extremizing as $\delta \searrow 0$, 
so Theorem~\ref{T : conc energy lower bound} follows in the ``neither'' case.

Now we turn to the cases when $\gamma$ is either odd or even.

\fbox{Even $\gamma$ :} Analogously with the previous case, it suffices to work with a slight modification and truncation of $f_\delta$,
$$
f_\delta^R(t):=\delta^{-1/p}\bigl[f\bigl(\tfrac{t-1}\delta\bigr)\rchi_{[-R,R]}\bigl(\tfrac{t-1}\delta\bigr) + f\bigl(\tfrac{-(t+1)}\delta\bigr)\rchi_{[-R,R]}\bigl(\tfrac{t+1}\delta\bigr)\bigr]\lambda_\gamma(t)^{-1/p}, \quad R<\delta^{-1}.
$$
By~\eqref{E : scriptE(a, delta) preserves L^q}, for $R < \delta^{-1}$,
\begin{align*}
    2^{1/p} B_{\gamma,p}\|f\rchi_{[-R,R]}\|_{L^p(dt)} & = B_{\gamma,p}\|f_\delta^R\|_{L^p(\lambda_\gamma)} \geq \|\scriptE_\gamma f_\delta^R\|_q\\
    & = 2 \lambda_\gamma (1)^{\frac 1p} \|\scriptE_{\gamma_{(1, \delta)}}\bigl(f \rchi_{[-R,R]} \lambda_\gamma (1 + \delta \cdot)^{- \frac 1p}\bigr)\|_{L^q_x}.
\end{align*}
By analogous computations to the previous case, the limit as $\delta \searrow 0$ of the right  side of the preceding is $2\|\scriptE_{\gamma_0}(f\rchi_{[-R,R]})\|_q$, which tends to $2B_{\gamma_0,p}$, as $R \to \infty$.  Thus, $B_{\gamma,p} \geq 2^{1/p'}B_{\gamma_0, p}$ and $f_\delta^R$ is extremizing as $\delta \searrow 0$ and $R \to \infty$, as claimed.  

\fbox{Odd $\gamma$ :} Odd curves possess ``antipodal'' points, distinct points in the image whose first $d$ derivatives match after a sign change.  This leads to spatial overlap of extensions oscillating at different frequencies, whose interactions lead us to Lemma~\ref{L : limiting constant} below.  We recall the function $\psi_{p,q}$ and its maximum $\Psi_{p,q}$ from \eqref{E : define Psi_p,q}. 

\begin{lemma}\label{L : limiting constant}
Let $1 < p \leq q$ and $0 \neq v \in \Rd$. Then for each $F, G \in L^q(\Rd)$, 
\begin{equation}\label{ineq for lim int}
\lim_{\lambda \to \infty} \|F(x) + e^{i D_{\lambda} x \cdot v} G(x)\|_{L_x^q}^q
\leq  \Psi_{p, q} (\|F\|_q^p + \|G\|_q^p)^{\frac qp}.
\end{equation}
The limit on the left side of~\eqref{ineq for lim int} always exists, and equality holds if and only if $|F|=\alpha|G|$, a.e., or $|G| = \alpha |F|$, a.e., for some maximum point $\alpha \in [0,1]$ of $\psi_{p,q}$.  
\end{lemma}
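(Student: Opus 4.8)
The plan is to show that the limit on the left of \eqref{ineq for lim int} exists and equals $\int_{\Rd}H$, where $H(x):=\frac1{2\pi}\int_0^{2\pi}|F(x)+e^{i\theta}G(x)|^q\,d\theta$, and then to bound $\int_{\Rd}H$ by the right-hand side of \eqref{ineq for lim int} via a pointwise identity and Minkowski's inequality. Write $\phi_\lambda(x):=D_\lambda x\cdot v=\sum_{j=1}^d\lambda^jv_jx_j$ and $g(x,\theta):=|F(x)+e^{i\theta}G(x)|^q$, so that the quantity inside the limit is $\int_{\Rd}g(x,\phi_\lambda(x))\,dx$. The main step is an equidistribution argument: as $\lambda\to\infty$ the phase $\phi_\lambda$ ``equidistributes'' mod $2\pi$, which I would make rigorous using Ces\`aro means in $\theta$ and the Riemann--Lebesgue lemma. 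The key structural observation is that for a.e.\ $x$ the $2\pi$-periodic function $\theta\mapsto g(x,\theta)$ is Lipschitz with constant $\lesssim_q(|F(x)|+|G(x)|)^q$ and is bounded by $(|F(x)|+|G(x)|)^q$, both bounds lying in $L^1(\Rd)$ since $F,G\in L^q$.

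Concretely, set $c_k(x):=\frac1{2\pi}\int_0^{2\pi}g(x,\theta)e^{-ik\theta}\,d\theta$, so $c_0=H$ and $|c_k(x)|\le H(x)\le(|F(x)|+|G(x)|)^q$, whence $c_k\in L^1(\Rd)$ for every $k$; let $\sigma_Ng(x,\cdot)=g(x,\cdot)*K_N$ be the $N$-th Ces\`aro mean in $\theta$ (convolution with the Fej\'er kernel $K_N$). The Lipschitz and sup bounds above feed into the quantitative Fej\'er estimate $\|g(x,\cdot)-\sigma_Ng(x,\cdot)\|_{L^\infty_\theta}\lesssim_q(|F(x)|+|G(x)|)^qN^{-1/2}$, hence
\[
\Bigl|\int_{\Rd}g(x,\phi_\lambda(x))\,dx-\int_{\Rd}\sigma_Ng(x,\phi_\lambda(x))\,dx\Bigr|\lesssim_q N^{-1/2}\bigl\|(|F|+|G|)^q\bigr\|_{L^1},
\]
uniformly in $\lambda$. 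For each fixed $N$, $\int_{\Rd}\sigma_Ng(x,\phi_\lambda(x))\,dx=\sum_{|k|\le N}(1-\tfrac{|k|}{N+1})\int_{\Rd}c_k(x)e^{ik\phi_\lambda(x)}\,dx$; the $k=0$ summand equals $\int_{\Rd}H$, while for $k\ne0$ the integral is, up to sign, the Fourier transform of $c_k\in L^1(\Rd)$ evaluated at $(k\lambda^jv_j)_{j=1}^d$, a point whose norm tends to $\infty$ since $v\ne0$, so the summand tends to $0$ by Riemann--Lebesgue. Letting first $\lambda\to\infty$ and then $N\to\infty$ shows the limit exists and equals $\int_{\Rd}H$.

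It remains to analyze $\int_{\Rd}H$. Fix $x$; assuming without loss of generality $|F(x)|\ge|G(x)|$ and putting $r(x):=|G(x)|/|F(x)|\in[0,1]$ (with $r(x):=0$, $H(x):=0$ if $F(x)=G(x)=0$), and absorbing the phase of $G(x)/F(x)$ into the integration variable, definition \eqref{E : define Psi_p,q} gives $H(x)=|F(x)|^q\cdot\frac1{2\pi}\int_0^{2\pi}|1+e^{i\theta}r(x)|^q\,d\theta=\psi_{p,q}(r(x))\,(|F(x)|^p+|G(x)|^p)^{q/p}$; the same identity holds with $r(x):=|F(x)|/|G(x)|$ when $|G(x)|>|F(x)|$, by the symmetry of $H$ under $F\leftrightarrow G$. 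Hence $H(x)\le\Psi_{p,q}(|F(x)|^p+|G(x)|^p)^{q/p}$ pointwise, with equality iff $r(x)$ maximizes $\psi_{p,q}$ (or $F(x)=G(x)=0$). Integrating and applying Minkowski's inequality in $L^{q/p}(\Rd)$ (as $q/p\ge1$) to $|F|^p,|G|^p$ gives $\int_{\Rd}H\le\Psi_{p,q}\int_{\Rd}(|F|^p+|G|^p)^{q/p}\le\Psi_{p,q}(\|F\|_q^p+\|G\|_q^p)^{q/p}$, which is \eqref{ineq for lim int}. For the equality case (the relevant regime being $q>p$, so $q/p>1$): equality in Minkowski's inequality forces $|F|^p$ and $|G|^p$ to be a.e.\ proportional, i.e.\ $|G|=c|F|$ a.e.\ for some $c\in[0,\infty)$ or $F\equiv0$; equality in the pointwise bound forces $r(x)$ to be a.e.\ a maximum point of $\psi_{p,q}$ on $\{|F|+|G|>0\}$. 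Proportionality makes $r(x)$ identically equal to $\alpha:=\min(c,1/c)\in[0,1]$ (read as $0$ when $c=0$ or $F\equiv0$) on that set, so $\alpha$ is a maximum point of $\psi_{p,q}$ and one of $|G|=\alpha|F|$, $|F|=\alpha|G|$ holds a.e.; the converse follows by reversing the argument. The main obstacle is the equidistribution step --- estimating $\int_{\Rd}g(x,\phi_\lambda(x))\,dx$ uniformly in $\lambda$ --- and its resolution rests on the Lipschitz-in-$\theta$ bound for $|F(x)+e^{i\theta}G(x)|^q$ with an $L^1(dx)$-integrable Lipschitz constant, which makes the Ces\`aro tail integrable in $x$ and the nonzero-frequency oscillation amenable to Riemann--Lebesgue.
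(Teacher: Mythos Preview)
Your proof is correct and follows essentially the same route as the paper's. The only difference is in establishing that the limit exists and equals $\int_{\Rd} H$: the paper simply cites a homogenization lemma of Allaire \cite[Lemma~5.2]{Allaire92} (see also \cite[Section~6]{Frank_etl_2016}), while you give a self-contained proof via Ces\`aro means and Riemann--Lebesgue. Your argument for that step is sound --- the Lipschitz-in-$\theta$ bound $\lesssim_q(|F(x)|+|G(x)|)^q$ is correct (since $q>1$ makes $z\mapsto|z|^q$ a $C^1$ function), the Fej\'er rate for Lipschitz functions is actually $O(N^{-1}\log N)$, comfortably within your claimed $O(N^{-1/2})$, and the Riemann--Lebesgue step uses exactly that $v\neq0$ forces $|(k\lambda^j v_j)_j|\to\infty$. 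The remainder of your argument --- the pointwise identity $H(x)=\psi_{p,q}(r(x))(|F(x)|^p+|G(x)|^p)^{q/p}$, the Minkowski bound in $L^{q/p}$, and the equality analysis --- matches the paper's proof line for line. Your parenthetical that the strict case $q>p$ is ``the relevant regime'' for the equality characterization is apt; the paper likewise relies on strict convexity of $L^{q/p}$ there, and in its applications one always has $q>p$.
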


\begin{proof}Applying~\cite[Lemma $5.2$]{Allaire92} (see also~\cite[section $6$] {Frank_etl_2016}), we have
\begin{equation}\label{E : lim constant value}
  \lim_{\lambda \to \infty} \|F(x) + e^{i D_{\lambda} x \cdot v} G(x)\|_{L^q_x}^q \notag = \frac1{2 \pi} \int_{\Rd} \int_{- \pi}^{\pi} |F(x) + e^{i \theta} G(x)|^q \, d\theta \, dx.  
\end{equation}
Changing variables in $\theta$, we may assume that $F=|F|, G=|G|$. Since $\psi_{p, q}(0) = 1$, decomposing $\Rd$ based on the relative sizes of $F$ and $G$ implies that the above is
\begin{align*}
& \int_{\{F G = 0\}} (F^p + G^p)^{\frac qp} \psi_{p,q}(0)\, dx + \int_{\{FG \neq 0\}} (F^p + G^p)^{\frac qp} \psi_{p,q}\bigl(\min\{\tfrac FG,\tfrac GF\}\bigr)\,dx 
\\
&\qquad \leq  \Psi_{p, q}\|F^p + G^p\|_ {\frac qp}^{\frac qp} 
\leq \Psi_{p, q}(\|F^p\|_{\frac qp} + \|G^p\|_ {\frac qp})^{\frac qp}   = \Psi_{p, q} (\|F\|_q^p + \|G\|_q^p)^{\frac qp}  .
\end{align*}
The first inequality follows from H\"older, and the second inequality follows from the triangle inequality in $L^{\frac qp}$. By the sharp forms of the H\"older and Minkowski inequalities, the two inequalities are both equalities if and only if either $|F| = \alpha|G|$ a.e. or $|G| = \alpha|F|$ a.e., where $\alpha \in [0,1]$ is a maximum point of $\psi_{p,q}$.  
\end{proof}

Now we are ready to complete the proof of Theorem~\ref{T : conc energy lower bound} in the case that $\gamma$ is odd.  Analogously with the case of even $\gamma$ we consider
$$
f_\delta^R(t):=\delta^{-\frac1p}\bigl[f\bigl(\tfrac{t-1}\delta\bigr)\rchi_{[-R,R]}\bigl(\tfrac{t-1}\delta\bigr) + \alpha \overline{g\bigl(\tfrac{-(t+1)}\delta\bigr)}\rchi_{[-R,R]}\bigl(\tfrac{t+1}\delta\bigr)\bigr]\lambda_\gamma(t)^{-\frac1p}, \:\: R<\delta^{-1},
$$
and observe that
$$
B_{\gamma,p} (\|f\rchi_{[-R,R]}\|_{L^p(dt)}^p + \alpha^p \|g\rchi_{[-R,R]}\|_{L^p(dt)}^p)^{\frac 1p} = B_{\gamma,p} \|f_\delta^R\|_{L^p(\lambda_\gamma)} \geq \|\scriptE_\gamma f_\delta^R\|_q.
$$
As $R \to \infty$, the above LHS goes to $(1 + \alpha^p)^{\frac 1p} B_{\gamma, p}$.

By \eqref{E : local approx} and $q = \tfrac{d(d+1)}2 p'$,
\begin{align*}
    \scriptE_\gamma f_\delta^R(x) = & \lambda_\gamma (1) \delta^{\frac{d(d+1)}{2q}}\bigl[e^{ix\cdot \gamma(1)} \scriptE_{\gamma_{(1, \delta)}}\bigl(f \rchi_{[-R,R]}\lambda_\gamma (1 + \delta \cdot)^{- \frac 1p} \bigr)(D_\delta T(1)^t x) \\
    &+\alpha e^{ix\gamma(-1)} \scriptE_{\gamma_{(- 1, \delta)}}\bigl(\overline{g(-\cdot)}\rchi_{[-R,R]}\lambda_\gamma (- 1 + \delta \cdot)^{- \frac 1p} \bigr)(D_\delta T(-1)^t x)\bigr].
\end{align*}
Since $\gamma$ is odd, $T(-1)^{-1}T(1) = -D_{- 1}$. By the change of variables formula,
\begin{align*}
    &\|\scriptE_\gamma f_\delta^R\|_q = \lambda_\gamma (1)^{\frac 1p}  \|\scriptE_{\gamma_{(1, \delta)}}\bigl(f\rchi_{[-R,R]}\lambda_\gamma (1 + \delta \cdot)^{- \frac 1p} + \alpha e^{-2iD_{\delta^{-1}}^{\vec l} x \cdot \gamma(1)} \\
    & \hspace{3 cm} \scriptE_{\gamma_{(- 1, \delta)}}\bigl(\overline{g(-\cdot)}\rchi_{[-R,R]}\lambda_\gamma (- 1 + \delta \cdot)^{- \frac 1p}\bigr)(-D_{- 1} x)\|_{L^q_x}.
\end{align*}
The limit as $\delta \searrow 0$ of the RHS is 
\begin{align*}
    &\lim_{\delta \searrow 0} \| \scriptE_{\gamma_0}(f\rchi_{[-R,R]})(x) + e^{-2iD_{\delta^{-1}}^{\vec l}x \cdot \gamma(1)}\alpha \scriptE_{\gamma_0}(\overline{g(-\cdot)}\rchi_{[-R,R]})(-D_{- 1}x)\|_{L^q_x}\\
    &\qquad = \lim_{\delta \searrow 0} \|\scriptE_{\gamma_0}(f\rchi_{[-R,R]})(x) + e^{-2iD_{\delta^{-1}}^{\vec l} x \cdot \gamma(1)} \alpha \overline{\scriptE_{\gamma_0}(g\rchi_{[-R,R]})(x)}\|_{L^q_x}\\
    &\qquad = \lim_{\delta \searrow 0} \|\scriptE_{\gamma_0} f(x) + e^{-2iD_{\delta^{-1}}^{\vec l} x \cdot \gamma(1)} \alpha \overline{\scriptE_{\gamma_0} g(x)}\|_{L^q_x} + o_R(1)\\
    &\qquad = (1+\alpha^p)^{1/p} \Psi_{p,q}^{1/q} \, B_{\gamma_0, p} + o_R(1).
\end{align*}
Letting $R \to \infty$ yields $B_{\gamma,p} \geq \Psi_{p,q}^{1/q}B_{\gamma_0,p}$ and $f_\delta^R$ (and hence $f_\delta$) is extremizing in the case of equality.  
\end{proof}

Having completed the proof of Theorem~\ref{T : conc energy lower bound}, we turn to that of Theorem~\ref{T : existence}.


\section{Improved $L^p$ estimate} \label{S:chips}


Our goal in this section is to improve on the $L^p(\lambda_\gamma) \to L^q$ inequality for $\scriptE_\gamma$, by replacing the $L^p(\lambda_\gamma)$ norm with something slightly smaller (Proposition~\ref{P:positive bound extn}). The primary purpose of this improved estimate is to replace each near extremizer for $\scriptE_\gamma$ by a few pieces that capture most of the $L^p(\lambda_\gamma)$-mass (Proposition~\ref{P : uniform dyadic decay}). Since all results in this section extend to general polynomials without any significant change in the argument, for the remainder of this section $\gamma$ will denote an arbitrary polynomial mapping from $\R$ into $\R^d$, of degree at most $N$.

For $m \in \Z$, let $\scriptI_m$ denote the set of intervals $I \subseteq \R$ with $\lambda_\gamma \sim c_I$ on $I$ and $c_I |I| \sim 2^m$ (thus, $\lambda_\gamma(I) \sim 2^m$), and let $\scriptI :=\bigcup_{m \in \Z} \scriptI_m$.   

\begin{proposition} \label{P:positive bound extn}
Let $\gamma:\R \to \R^d$ be a polynomial of degree at most $N$ and $q := \tfrac{d^2 + d}2 p' > \tfrac{d^2 + d + 2}2$. There exist $c=c_p>0$ and $0 < \theta = \theta_p < 1$ such that 
\begin{equation} \label{E:positive bound extn}
\|\scriptE_\gamma f\|_q \lesssim \sup
2^{-c n} \Big(\|f \rchi_{I \cap \{|f|\leq 2^n 2^{- m/p} \|f\|_{L^p(\lambda_\gamma)}\}}\|_{L^p(\lambda_\gamma)}\Big)^\theta \|f\|_{L^p(\lambda_\gamma)}^{1 - \theta}, 
\end{equation}
for every $f \in L^p(\lambda_\gamma)$.  The supremum in \eqref{E:positive bound extn} is taken over $m \in \Z$, $I \in \scriptI_m$, $n \geq 0$.
\end{proposition}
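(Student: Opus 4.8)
The plan is to follow the bilinear-to-linear strategy of B\'egout--Vargas \cite{BegoutVargas07}, as adapted to curves in \cite{BiswasStovall20}, but to track the ``chip'' structure carefully enough that the improved estimate emerges. First I would set up the relevant bilinear extension estimate: there is a geometric inequality (the analogue of \eqref{E:geom ineq}) which, for intervals $I, J$ with commensurable torsion-weighted length and suitable separation, gives a bilinear gain
$$
\|\scriptE_\gamma(f\rchi_I)\,\scriptE_\gamma(g\rchi_J)\|_{L^{q/2}} \lesssim (\tfrac{\dist(I,J)}{\diam(I\cup J)})^{-\sigma}\,\lambda_\gamma(I)^{-s}\lambda_\gamma(J)^{-s}\|f\rchi_I\|_{L^p(\lambda_\gamma)}\|g\rchi_J\|_{L^p(\lambda_\gamma)}
$$
for appropriate positive exponents. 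Because the monomial (indeed polynomial) curve's $d$-fold sumset is not easily Whitney-decomposed into finitely-overlapping convex pieces, I would quote Lemma~\ref{L:convex approx} (the quantitative Steinig-type result) to carry out the Whitney decomposition of $\R\times\R$ off the diagonal, grouping pairs of intervals into scales on which the bilinear estimate applies with geometrically decaying constants. Summing the Whitney pieces (using $q>p$ for the $\ell^{q/2}$ orthogonality of the output) recovers the linear bound $\|\scriptE_\gamma f\|_q \lesssim \|f\|_{L^p(\lambda_\gamma)}$, but the decay in $\dist/\diam$ means the main contribution comes from pairs of \emph{nearby}, comparable intervals, i.e. from a single scale $m$ and a single interval $I\in\scriptI_m$ (up to bounded overlap).

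Next I would localize in amplitude. Fix the dominant interval $I\in\scriptI_m$ produced by the previous step, and split $f\rchi_I = \sum_{n\ge 0} f\rchi_{I\cap A_n}$ where $A_n := \{2^{n-1}2^{-m/p}\|f\|_{L^p(\lambda_\gamma)} < |f| \le 2^n 2^{-m/p}\|f\|_{L^p(\lambda_\gamma)}\}$ (with $A_0$ the bottom piece). The point of the normalization is that $\lambda_\gamma$-measure of $A_n\cap I$ is forced to be small when $n$ is large: indeed $2^{(n-1)p}2^{-m}\|f\|_{L^p(\lambda_\gamma)}^p\,\lambda_\gamma(A_n\cap I) \le \|f\|_{L^p(\lambda_\gamma)}^p$, so $\lambda_\gamma(A_n\cap I)\lesssim 2^{-np}2^m$. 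Applying the crude $L^\infty\to$ (bounded interval) bound together with H\"older on each amplitude-piece, or rather interpolating between the trivial $L^\infty$ control and the $L^p(\lambda_\gamma)$-bound, one gains a factor $2^{-c' n}$ on the contribution of $A_n$ for $n$ large; for bounded $n$ one simply absorbs into the supremum. Telescoping the geometric series in $n$ and in the scale parameter $m$ then yields a bound of the shape
$$
\|\scriptE_\gamma f\|_q \lesssim \sup_{m,I\in\scriptI_m,\,n\ge0} 2^{-cn}\,\|f\rchi_{I\cap\{|f|\le 2^n2^{-m/p}\|f\|_{L^p(\lambda_\gamma)}\}}\|_{L^p(\lambda_\gamma)}^{\theta}\,\|f\|_{L^p(\lambda_\gamma)}^{1-\theta},
$$
where $\theta\in(0,1)$ records the interpolation exponent and the $1-\theta$ power of the global norm comes from the pieces that were merely estimated by the global linear bound.

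The main obstacle I anticipate is the geometric/combinatorial heart: making the Whitney decomposition of the $d$-fold sum set work uniformly over all polynomial curves of degree $\le N$ without a genuine dilation symmetry, which is exactly why Lemma~\ref{L:convex approx} is needed and why the exponents must be tracked by hand rather than inherited from a model case. A secondary subtlety is bookkeeping the two normalizing powers of $\|f\|_{L^p(\lambda_\gamma)}$ inside the truncation so that the estimate is genuinely scale-invariant under \eqref{E : scriptE(a, delta) preserves L^q}: the truncation threshold $2^n2^{-m/p}\|f\|_{L^p(\lambda_\gamma)}$ has to transform correctly under the rescalings $f\mapsto(\delta\lambda_\gamma(a))^{1/p}f(a+\delta\cdot)$, and checking this (rather than the individual estimates, which are routine interpolation) is where care is required. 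Everything else — summing geometric series, H\"older, the trivial $L^\infty$ bound on a bounded interval — is routine.
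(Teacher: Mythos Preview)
Your overall strategy---multilinear-to-linear via a Whitney-type decomposition, then amplitude localization following \cite{BiswasStovall20}---is the paper's, but two genuine gaps separate your sketch from a proof.

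First, the argument must be \emph{$d$-linear}, not bilinear. You write a bilinear estimate $\|\scriptE_\gamma(f\rchi_I)\,\scriptE_\gamma(g\rchi_J)\|_{L^{q/2}}$ and propose to Whitney-decompose $\R\times\R$ off the diagonal, yet you invoke Lemma~\ref{L:convex approx}, which is a statement about the \emph{$d$-fold} sum map $\Gamma(t_1,\ldots,t_d)=\sum_j\gamma(t_j)$ and the images of $d$-cubes under it. For curves in $\R^d$ with $d>2$ the bilinear gain does not reach the required exponent; the paper instead takes $d$-fold products $\prod_{j=1}^d \scriptE_\gamma f_{R,j}$ indexed by dyadic cubes $R\subset[0,1]^d$ at Whitney distance from the diagonal $\Delta$, and applies Lemma~\ref{L:TVV} (the Tao--Vargas--Vega almost-orthogonality) in $L^{q/d}$. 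Your framing mixes the hypersurface/bilinear picture with the $d$-linear ingredient you correctly identify as needed, so the decay exponent you would extract is not the right one.

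Second, you skip the reductions that make the Whitney step available on a general polynomial. The paper first decomposes $\R$ into the Dendrinos--Wright intervals $I_j$ on which \eqref{E:geom ineq} holds and $|L_\gamma|$ is monotone; then (Lemma~\ref{L:reduce dyadic}, itself via an $M$-fold product with $M=\lceil q\rceil+1$ and the off-diagonal decay of \cite[Lemma~4.1]{Stovall16}) reduces to a single torsion-level set $A_{j,k}$; and then, crucially, rescales so that $\gamma=\gamma_0+\tilde\gamma$ with $\|\tilde\gamma\|_{C^N[0,1]}<\eps_{d,N}$ as in \eqref{E:moment plus err}. Only \emph{after} this normalization does the Whitney decomposition of $[0,1]^d$ and Lemma~\ref{L:convex approx} apply. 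Without these reductions the geometric inequality \eqref{E:geom ineq} can fail, and the Steinig/convex-approximation step has no uniform formulation over all of $\R$. Your amplitude-localization paragraph is in the right spirit and is indeed what the paper defers to \cite[Proposition~3.1]{BiswasStovall20}, but it presupposes that the frequency localization to a single $I\in\scriptI_m$ has already been achieved via the $d$-linear machinery above.
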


\begin{proof}[Proof of Proposition~\ref{P:positive bound extn}]
It is shown in \cite{Dendrinos_Wright_2010} 
that there exists a decomposition of $\R$ as a union of intervals, 
\begin{equation}\label{decompose R}
\R  = \bigcup_{l = 1}^{C_{d, N}} I_j
\end{equation}
such that the geometric inequality
\begin{equation} \label{E:geom ineq}
|\det(\gamma'(t_1), \ldots, \gamma'(t_d))| \gtrsim \prod_{j = 1}^d |L_\gamma(t_j)|^{1/d} \prod_{1 \leq i < j \leq d}|t_i - t_j|
\end{equation} 
holds for $(t_1,\ldots,t_d) \in I_j^d$, for each $j$. Since $(L_\gamma)^2$ is a polynomial, it has a bounded number of critical points, and so we may assume that $|L_\gamma|$ is monotone on each $I_j$; otherwise, we further subdivide $I_j$ into a bounded number of intervals.  

By~\eqref{decompose R} and the triangle inequality, validity of \eqref{E:positive bound extn} for each $f \rchi_{I_j}$ will imply its validity for $f$, so for the remainder of the argument, we proceed with $j$ fixed.

Next, we reduce matters to consideration of intervals on which $L_\gamma$ is comparable to a constant. Let $A_k := \{2^{k - 1} < |L_\gamma| \leq 2^k\}$ and $A_{j, k} := A_k \cap I_j$. By construction, each $A_{j, k}$ is an interval. 

\begin{lemma} \label{L:reduce dyadic}
There exists $0 < \theta = \theta_p < 1$ such that
$$
\|\scriptE_\gamma (f \rchi_{I_j})\|_q \lesssim \max_{k \in \Z} \|\scriptE_\gamma (f \rchi_{A_{j, k}})\|_q^{\theta} \|f\|_{L^p(\lambda_\gamma)}^{1 - \theta}.
$$
\end{lemma}
\begin{proof}[Proof of Lemma~\ref{L:reduce dyadic}]
We define  $M := \lceil q\rceil + 1$. Since $(a + b)^\alpha \leq a^\alpha + b^\alpha$ for $a,b \geq 0$ and $ 0 < \alpha \leq 1$, a bit of arithmetic gives
\begin{align}\label{get ordered pair}
\|\scriptE_\gamma (f \rchi_{I_j})\|_q^q & \notag = \int |\sum_{\vec{k} \in \Z^M} \prod_{l = 1}^M \scriptE_\gamma (f \rchi_{A_{j, k_l}})|^{\frac qM} \leq \sum_{\vec{k} \in \Z^M} \int |\prod_{l = 1}^M \scriptE_\gamma (f \rchi_{A_{j, k_l}})|^{\frac qM} dx
\\
& \lesssim \sum_{m = 0}^\infty \,\, \sum_{k_1 \leq k_2 \leq \cdots \leq k_M = k_1 + m} \|\prod_{l = 1}^M \scriptE_\gamma (f \rchi_{A_{j, k_l}})\|_{\frac qM}^{\frac qM}.
\end{align}
By H\"older's inequality and~\cite[Lemma~4.1]{Stovall16}, for $k_1 \leq \cdots \leq k_M = k_1 + m$, 
\begin{align*}
\|\prod_{l = 1}^M \scriptE_\gamma (f \rchi_{A_{j, k_l}})\|_{\frac qM} & \lesssim 2^{-c m} \prod_{l = 1}^M \|f \rchi_{A_{j, k_l}}\|_{L^p(\lambda_\gamma)},
\end{align*}
for some admissible $c>0$. Therefore, by the H\"older and arithmetic-geometric mean inequalities, and the fact that an interval of length $m$ contains at most $(m+1)^M$ ordered integer sequences of length $M$, RHS~\eqref{get ordered pair} is bounded by a constant times
\begin{align*}
&(\sup_k \|\scriptE_\gamma(f \rchi_{A_{j, k}})\|_q^{q - p}) \sum_{m = 0}^\infty 2^{-c m} \sum_{k_1 \leq k_2 \leq \cdots \leq k_M = k_1 + m} \,\, \sum_{l = 1}^M \|f \rchi_{A_{j, k_l}}\|_{L^p(\lambda_\gamma)}^p
\\
& \qquad \lesssim (\sup_k \|\scriptE_\gamma (f \rchi_{A_{j, k}})\|_q^{q - p}) \sum_{m = 0}^\infty 2^{-c m} (m + 1)^M \sum_k \|f \rchi_{A_{j, k}}\|_{L^p(\lambda_\gamma)}^p
\\
& \qquad 
\lesssim (\sup_k \|\scriptE_\gamma (f \rchi_{A_{j, k}})\|_q^{q - p}) \|f \rchi_{I_j}\|_{L^p(\lambda_\gamma)}^p.
\end{align*}
\end{proof}

It remains to prove \eqref{E:positive bound extn} for functions $f$ supported on a single $A_{j,k}$. We will reduce this problem to proving \eqref{E:positive bound extn} for a small perturbation of the moment curve $\gamma_0$.  Our initial reductions follow those from~\cite[Section 2]{Stovall16}, so we will be brief.  

By \cite{BiswasStovall20}, it suffices to consider the case when $L_\gamma$ is not constant, so $A_{j, k}$ is a bounded interval.  By rescaling and translating $A_{j, k}$, we may assume that $A_{j, k} = [0, 1]$. By affine invariance, we may further assume that $L_\gamma \sim 1$ on $[0, 1]$. By~\cite[Lemma $2.2$]{Stovall16}, after an additional affine map, we may assume that $\|\gamma\|_{C^N([0,1])} \lesssim 1$; in fact, after a finite decomposition and a further affine transformation, rescaling, and translation, we may assume that 
\begin{equation} \label{E:moment plus err}
\gamma(t) = \gamma_0 + \tilde\gamma, \qtq{with} \tilde \gamma^{(j)}(0) = 0, \:j = 0, \ldots, d, \,\, \text{and} \,\, \|\tilde\gamma\|_{C^N([0, 1])} < \eps_{d, N},
\end{equation}
with $\eps_{d, N}$ sufficiently small for later purposes.  

We consider the cube $Q := [0, 1]^d \subset \Rd$, which contains the diagonal 
$$
\Delta := \{(t, \ldots, t) \in \R^d : t \in [0, 1]\}.  
$$
Let $K$ be a fixed positive integer, sufficiently large for later purposes. We will choose $K$ to depend on $d, N$, but other constants within the proof will not be allowed to depend on $K$. Let $\scriptR_m$ denote the set of all dyadic cubes $\tau$ in $[0, 1]^d$ with side-length $2^{- m}$ and
$$
2^{- m + K} \leq \dist(\tau, \Delta) \leq 2^{- m + 2K}.
$$
Set $\scriptR:=\bigcup_{m \geq K} \scriptR_m$.  Then 
$$
Q \setminus \Delta = \bigcup_{R \in \scriptR} R, 
$$
and each cube in the union on the right intersects only a bounded number of other cubes, which all have comparable side lengths and distance to $\Delta$.  Therefore,
$$
\|\scriptE_\gamma (f \rchi_{[0, 1]})\|_q^d = \|\sum_{R \in \scriptR} \prod_{j = 1}^d \scriptE_\gamma f_{R, j}\|_{\frac qd},
$$
where $f_{R, j}$ is supported on the projection of the cube $R$ onto the $j$-th coordinate and $|f_{R, j}| \lesssim |f|$. We define $f_R := (\prod_{j = 1}^d \scriptE_\gamma f_{R, j})\widehat{\:}$. Thus $f_R$ is supported in $\Gamma(R)$, where 
\begin{equation}\label{define Gamma}
\Gamma(t_1, \ldots, t_d) := \gamma(t_1) + \cdots + \gamma(t_d).
\end{equation}

From a lemma of Steinig~\cite{Steinig71}, under hypothesis \eqref{E:moment plus err}, for $\eps_{d, N}$ small enough, $\Gamma$ is $d!$-to-one and, in particular, one-to-one on the subsets $\{0 \leq t_{\sigma(1)} \leq \cdots \leq t_{\sigma(d)} \leq 1\}$, $\sigma \in S_d$ (the symmetric group on $d$ letters). Therefore, $\Gamma(R)$ and $\Gamma(R')$, $R, R' \in \scriptR$ only intersect when $R$ and $R'$ intersect after reordering the coordinates of one of them. Consequently, the $f_R$, $R \in \scriptR$ have finitely overlapping frequency supports.   

Our next step is to apply an analogue of an almost-orthogonality lemma of Tao-Vargas-Vega~\cite[Lemma $6.1$]{TaoVargasVega98}.  For this, we need something slightly stronger than bounded overlap of the $\Gamma(R)$'s, namely bounded overlap of some convex approximations of the $\Gamma(R)$'s. Let 
\begin{align*}
& \scriptA:=\bigl\{(m, \vec n, k) \in \R^{1 + d + 1} : K \leq m \in \Z, 0 \leq k \in 2^{- m} \Z, \vec n \in 2^{-m}([0, 2^{2K}] \cap \Z)^d,
\\
& \hspace{6 cm} 0 \in \{n_1, \ldots, n_d\}, \qtq{and} |\vec n| \geq 2^{- m + K}\bigr\}.
\end{align*}
We observe, in particular, that for each $m,k$, there are at most $2^{2Kd}$ $\vec n$ for which $(m,\vec n,k) \in \scriptA$.

Each element of $\scriptR$ may be written in the form
$$
R(m, \vec n, k) := 2^{- m} [0, 1]^d + \vec n + \vec k, \qquad (m, \vec n, k) \in \scriptA, \qquad \vec k:=(k,\ldots,k).  
$$
We define convex sets
\begin{gather*}
Q(m, \vec n, k) := \Gamma(\vec n + \vec k) + \sum_{j = 1}^d T (n_j + k) P_{2^{- m}} \subset \Rd, \qtq{where} P_\lambda := \prod_{j = 1}^d [- \tfrac{\lambda^j}{j!}, \tfrac{\lambda^j}{j!}].
\end{gather*}
We will use these to approximate the $R(m,\vec n,k)$.    

\begin{lemma}\label{L:convex approx}
Assume that $\gamma$ obeys~\eqref{E:moment plus err}, and let $\delta : = \eps_{d, N} 2^{- (K + 1) d}$. Then for $K = K_{d, N}$ and $L = L_{d, N}$ sufficiently large, and $m \geq K$,
\begin{equation} \label{E:convex approx}
\Gamma(R (m + 1, \vec n, k)) + T(k)P_{\delta2^{-m}} \subseteq Q(m, \vec n, k), \quad (m, \vec n, k) \in \scriptA, 
\end{equation}
and there exists a decomposition $\scriptA = \bigcup_{j = 1}^L \scriptA_j$ so that for each $j$,
\begin{equation} \label{E:finite overlap}
Q(m, \vec n, k) \cap Q(m', \vec n', k') = \emptyset, \qtq{for} (m, \vec n, k) \neq (m', \vec n', k') \in \scriptA_j.
\end{equation}  
\end{lemma}
\begin{proof}[Proof of Lemma~\ref{L:convex approx}]
Observe that $\Gamma(R(m,\vec n,k))$ and $Q(m,\vec n,k)$ remain unchanged after reordering the coordinates of $\vec n$.  Thus we may restrict ourselves to consideration of those $(m, \vec n, k)$ with $0 = n_1 \leq \ldots \leq n_d$.

We begin with the proof of~\eqref{E:finite overlap}. With $L$ to be determined, we construct subsets $\scriptA_j$, $j= 1, \ldots, L 2^L$ such that whenever $(m, \vec n, k), (m', \vec n', k') \in \scriptA_j$, either $m = m'$ or $|m - m'| \geq L$ and if $m = m'$, then either $k = k'$ or $|k - k'|\geq  2^{L - m}$. Let $(m, \vec n, k) \neq (m', \vec n', k')  \in \scriptA_j$, and suppose that $x_0 \in Q(m, \vec n, k) \cap Q(m', \vec n', k')$.  

If $m = m'$, 
\begin{align*}
x_0 &= \Gamma(\vec n + \vec k) + O(2^{- m}) = \Gamma(\vec n' + \vec k') + O(2^{- m})
\\
& = d \gamma(k) + O(2^{K - m}) = d \gamma(k') + O(2^{K - m}),
\end{align*}
where the $O(\cdot)$ terms have norm bounded by an admissible constant times their argument.  
However, since $\gamma_1'(t) \sim 1$ on $[0,1]$ by \eqref{E:moment plus err},  this implies $|k-k'| \lesssim 2^{K-m}$, a contradiction if $L \geq K+C_{N,d}$.  

Now suppose that $m < m'$.  Thus $m + L \leq m'$. Assuming $L \geq K + C_{d, N}$, and arguing as above, 
\begin{equation} \label{E:x=Qmlk=Qm'l'k'}
x_0 = \Gamma(\vec n + \vec k) + O(2^{- m}) = d \gamma(k')+ O(2^{- m}),
\end{equation}
so $|k - k'| \sim |\vec n| \sim 2^{K - m}$. We order the (distinct) elements of $\{n_1 + k, \ldots, n_d + k, k'\}$ as $t_0 < \cdots < t_l$, with $1 \leq l \leq d$. Since $n_1 = 0$ and $|\vec n| \sim 2^{K - m}$, $t_l - t_0 \sim 2^{K - m}$. Let 
\begin{equation} \label{E:def bar gamma}
\bar \gamma := D_{2^{m - K}} T(t_0)^{- 1} \gamma(2^{K - m} \cdot + t_0).
\end{equation}
Then $\bar \gamma$ also obeys \eqref{E:moment plus err}, and $D_{2^{m - K}} T(t_0)^{- 1} x_0 \in \bar Q(K, \vec n, \bar k) \cap \bar Q(m' - m + K, \vec n', \bar k')$, where $\bar k, \bar k' \leq 1$ and the $\bar Q$ are the convex sets associated to $\bar \gamma$.  

Thus we may assume that $m=K$. We may further assume, by absorbing any errors into the $O(2^{- K})$ term in \eqref{E:x=Qmlk=Qm'l'k'}, that $k' \in 2^{- K} \Z_{\geq 0}$. Thus the $t_j$ lie in $2^{- K}\Z_{\geq 0}$, and $t_0 = 0, t_l \sim 1$. By pigeonholing, for $K = K_d$ sufficiently large, there exist positive $A$ and $B$ with $A > d^2 B + C$ (with $C = C_d$ a sufficiently large constant) such that for each $j, j'$, either $|t_j - t_{j'}| < 2^{- A}$, or $|t_j - t_{j'}| > 2^{- B}$. We say that $j \sim j'$ if $|t_j - t_{j'}| < 2^{- A}$. Since $A - d > B$, this is an equivalence relation on $\{0, \ldots, l\}$. Furthermore, $t_0 = 0$ and $t_l \sim 1$ lie in two different equivalence classes. We choose one representative from each equivalence class, denoted by $0 = s_0 < \ldots < s_{l'} = 1$. Henceforth, we assume that $l' = d$; if not, we truncate $\gamma$ to its first $l'$ coordinates.  

We now adapt Steinig's argument. Approximating each $t_j$ by the $s_i$ from its equivalence class, we may rewrite \eqref{E:x=Qmlk=Qm'l'k'} as 
$$
\sum_{j=0}^d a_j \gamma(s_j) = O(2^{-A}),
$$
where the $a_j$ are integers with $\sum_j a_j = 0$.  Moreover, exactly one $a_j$ is negative (this is the coefficient of $\gamma(s_i)$, where $k' \in [s_i]$), and the rest are positive. For $1 \leq i \leq d$, with $b_i := \sum_{j = 0}^{i - 1} a_j$, the above is
\begin{equation} \label{E:lin dep}
\sum_{i = 1}^d b_i \int_{s_{i - 1}}^{s_i} \gamma'(u) \, du = O(2^{- A}).
\end{equation}
Since $b_1 \neq 0$ this implies that
$$
\det(\int_{s_{j - 1}}^{s_j} \gamma'(u_j)\, du_j)_{j = 1}^d = O(2^{- A}).  
$$
By multi-linearity of determinants, 
\begin{equation} \label{E:int of det}
\int_{s_0}^{s_1} \cdots \int_{s_{d - 1}}^{s_d} \det(\gamma'(u_1), \ldots, \gamma'(u_d)) \, du_d \, \ldots \, du_1 = O(2^{- A}).
\end{equation}
By inequality \eqref{E:geom ineq}, 
$$
|\det(\gamma'(u_1), \ldots, \gamma'(u_d))| \gtrsim \prod_{1 \leq i < j \leq d} |u_i - u_j|.
$$
In particular, on the domain in~\eqref{E:int of det} the integrand is single-signed.  Therefore,
\begin{equation} \label{E:int of vdm}
\int_{s_0}^{s_1} \cdots \int_{s_{d - 1}}^{s_d} \prod_{1 \leq i < j \leq d} |u_i - u_j| \, du_d \, \ldots \, du_1 = O(2^{- A}).
\end{equation}
On the other hand, since $s_i \geq s_{i - 1} + 2^{- B}$ for all $i$, LHS\eqref{E:int of vdm} $\gtrsim 2^{- B d^2}$, a contradiction for $C$ sufficiently large.  Thus \eqref{E:finite overlap} is proved.

Now we turn to the proof of \eqref{E:convex approx}.  Making the transformation in \eqref{E:def bar gamma}, it suffices to consider the case $m=K$.

We observe that $Q(K + 1, \vec n, k) \subset \frac 12 Q(K, \vec n, k)$, where we use the usual convention that for $E$ a symmetric convex body and $x \in \R^d$, $\frac 12 (E + x) := \frac 12 E + x$.

By making a Taylor approximation, $\Gamma(R(K+1, \vec n, k)) \subseteq Q(K + 1, \vec n, k) + B(0, d \delta)$, so it suffices to prove that $B(0, (d + 1) \delta) \subseteq Q(K+1, \vec n,k) - \Gamma(\vec n + \vec k)$.  However, by inequality~\eqref{E:moment plus err}, $\|T (t)^{- 1}\| \lesssim 1$, uniformly on $[0,1]$, and so each $T (n_j + k) P_{2^{- (K + 1)}}$ contains $B(0, (d + 1) \delta)$, when $K$ is sufficiently large.  

Finally, the proof of Lemma~\ref{L:convex approx} is complete.
\end{proof}

For each $\sigma \in S_d$, we let $\scriptR_\sigma \subset \scriptR$ denote the set of those $R \in \scriptR$ satisfying $t_{\sigma(1)} \leq \ldots \leq t_{\sigma(d)}$ for $t \in R$.
\begin{lemma}\label{L:TVV}
For each $q > \tfrac{d^2 + d + 2}2$
\begin{equation}\label{E:TVV}
\|\sum_{R \in \scriptR} \prod_{j = 1}^d \scriptE_\gamma f_{R, j}\|_{\frac qd}
\lesssim 
\Big(\sum_{\sigma \in S_d} \sum_{R \in \scriptR_{\sigma}} \|\prod_{j = 1}^d \scriptE_\gamma f_{R, j}\|_{\frac qd}^{(\frac qd)'}\bigr)^{\frac1{(\frac qd)'}}.
\end{equation}
\end{lemma}
\begin{proof}[Proof of Lemma~\ref{L:TVV}]
Let $A := (m, \vec n, k) \in \scriptA$. Recall that $\bar\gamma$ defined in \eqref{E:def bar gamma} obeys \eqref{E:moment plus err}. Let $\bar A := (K, \vec n, 0)$. By Lemma~\ref{L:convex approx}, there exists $\psi_{\bar A} \in \scriptS(\Rd)$ with $\psi_{\bar A} \equiv 1$ on $\bar \Gamma(R(\bar A))$, $\psi_{\bar A}$ supported on $\bar Q(\bar A)$, and $\|\hat \psi_{\bar A}\|_1 \sim 1$. Let $\psi_A(\xi) := \psi_{\bar A} \circ D_{2^{m - K}} T (k)^{-1}$. Then $\psi_A \equiv 1$ on $\Gamma(R(A))$, $\psi_A$ is supported on $Q(A)$, and $\|\hat \psi_A\|_1 \sim 1$. Moreover, by \eqref{E:finite overlap}, $\sum_{A \in \scriptA} \psi_A \sim 1$ on $\Gamma(Q \setminus \Delta)$.

Inequality \eqref{E:TVV} then follows from a well known interpolation argument of Tao-Vargas-Vega~\cite[Lemma~$6.1$]{TaoVargasVega98}.  
\end{proof}

The remainder of the proof of Proposition~\ref{P:positive bound extn} follows precisely that of \cite[Proposition $3.1$]{BiswasStovall20}. As that argument is rather long, we will not repeat it here. However, we briefly summarize.  For $p < d + 2$, we directly adapt the bilinear-to-linear argument of Tao--Vargas--Vega in~\cite{TaoVargasVega98}, and for $d + 2 \leq p < \frac{d^2 + d + 2}2$, it involves an adaptation~\cite[Lemma $3.8$]{BiswasStovall20} of the Marcinkiewicz interpolation argument in~\cite{SteinWeissbook} .
\end{proof}

The following result quantifies the heuristic that a few pieces are responsible for most of the $L^q$ norm of the extension of a near-extremizer.  

\begin{proposition}\label{P : uniform dyadic decay}
Let $q := \tfrac{d^2 + d}2 p' > \tfrac{d^2 + d + 2}2$ and $\gamma : \R \to \Rd$ be a polynomial of degree $N$. There exists $\rho_k = \rho_k (d, N, p) \searrow 0$ such that the following holds. For each $f \in L^p(\lambda_\gamma)$, there exists a sequence $\{\tau^k\}\subseteq \scriptI$, 
such that for $\{f^{> k}\}$ inductively defined by $f^{> 0} := f$ and 
\begin{equation}\label{partition:f}
f^k := f^{> k - 1} \rchi_{\tau^k \cap \{|f| < 2^k \lambda_\gamma(\tau^k)^{-1/p} \|f\|_{L^p(\lambda_\gamma)}\}}, \quad f^{> k} := f^{> k - 1} - f^k,
\end{equation}
for each positive integer $k$
$$
\|\scriptE_\gamma f^{> k}\|_q \leq \rho_k \|f\|_{L^p(\lambda_\gamma)}.  
$$
\end{proposition}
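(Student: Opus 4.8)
The plan is to iterate Proposition~\ref{P:positive bound extn}. At each stage, given the ``remainder'' $f^{>k-1}$, we apply \eqref{E:positive bound extn} to extract the single interval $I \in \scriptI_m$ and the threshold exponent $n \geq 0$ that come closest to saturating the supremum, and we define $\tau^k := I$ and $f^k := f^{>k-1}\rchi_{I \cap \{|f| < 2^n 2^{-m/p}\|f\|_{L^p(\lambda_\gamma)}\}}$. (We should remark that although \eqref{E:positive bound extn} involves a supremum rather than a maximum, a near-supremizing choice loses only a harmless factor such as $2$, which we absorb into the constants; alternatively one checks that only finitely many $(m, I, n)$ can contribute more than a small fraction and so a maximum is attained.) The key observation is that by construction $|f^k| \leq |f|$ pointwise on an interval in $\scriptI_m$, and $|f^k| \leq 2^n 2^{-m/p}\|f\|_{L^p(\lambda_\gamma)}$ there, so the normalized truncated quantity appearing on the right of \eqref{E:positive bound extn} for the choice that produced $f^k$ is exactly $\|f^k\|_{L^p(\lambda_\gamma)}$ up to the weight $2^{-cn}$; applying \eqref{E:positive bound extn} to $f^{>k-1}$ and feeding in this near-optimal choice gives
\begin{equation*}
\|\scriptE_\gamma f^{>k-1}\|_q \lesssim 2^{-c n_k}\|f^k\|_{L^p(\lambda_\gamma)}^{\theta}\|f^{>k-1}\|_{L^p(\lambda_\gamma)}^{1-\theta} \lesssim \|f^k\|_{L^p(\lambda_\gamma)}^{\theta}\|f\|_{L^p(\lambda_\gamma)}^{1-\theta},
\end{equation*}
where in the last step we used $2^{-cn_k} \leq 1$ and the monotonicity $\|f^{>k-1}\|_{L^p(\lambda_\gamma)} \leq \|f\|_{L^p(\lambda_\gamma)}$, which is immediate from \eqref{partition:f} since the $f^k$ have pairwise disjoint supports and are each a restriction of $f$.

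Now I would run the standard summability argument. Normalize $\|f\|_{L^p(\lambda_\gamma)} = 1$. Disjointness of supports gives $\sum_{k\geq 1}\|f^k\|_{L^p(\lambda_\gamma)}^p \leq 1$, hence $\|f^k\|_{L^p(\lambda_\gamma)}^p \to 0$; more precisely, for any $k$ there can be at most $j$ indices $k' \leq k$ with $\|f^{k'}\|_{L^p(\lambda_\gamma)}^p > 1/j$, so if we order things appropriately the $k$-th largest mass is $\lesssim k^{-1/p}$. Set
\begin{equation*}
\rho_k := C\sup_{f:\,\|f\|_{L^p(\lambda_\gamma)}=1}\ \|f^{k+1}\|_{L^p(\lambda_\gamma)}^{\theta},
\end{equation*}
which by the displayed inequality dominates $\|\scriptE_\gamma f^{>k}\|_q$ for every normalized $f$; the only remaining point is that $\rho_k \searrow 0$ at a rate depending only on $d, N, p$. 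This follows because $\|f^{k+1}\|_{L^p(\lambda_\gamma)}$ is, by the pigeonhole bound above, at most $(k+1)^{-1/p}$ uniformly in $f$ — indeed at the $(k+1)$-st step at most $k$ of the previously removed chips can have had mass exceeding $1/k$, forcing the next one to be $\leq k^{-1/p}$ — so $\rho_k \lesssim k^{-\theta/p}$, and replacing $\rho_k$ by $\sup_{j\geq k}\rho_j$ makes it monotone without affecting the rate. Finally, the inductive definition \eqref{partition:f} with $\tau^k, f^k$ as chosen produces a sequence in $\scriptI$ by the very definition of $\scriptI_m$, completing the construction.

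The main obstacle, and the step deserving the most care, is the uniformity of the decay rate $\rho_k$ over all $f \in L^p(\lambda_\gamma)$: one must argue that the quantity extracted at step $k$ is small purely because $k-1$ disjoint pieces of comparable or larger mass have already been removed from a function of unit norm, independently of the geometry of $\gamma$ and of which intervals were selected. This is genuinely just the pigeonhole/convexity bound $\|f^k\|_{L^p(\lambda_\gamma)}^p \le 1/(k-1)$ once one checks that the selection is (nearly) greedy in the mass — but one must verify that \eqref{E:positive bound extn} does force the selected piece to carry a controlled share of the mass, rather than merely being some small sliver; this is where the weight $2^{-cn}$ is used, ensuring that a piece selected with large threshold exponent $n$ contributes little and so can be ignored, while a piece with bounded $n$ has $\|f^k\|_{L^p(\lambda_\gamma)} \sim \|\scriptE_\gamma f^{>k-1}\|_q^{1/\theta}$ up to constants. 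A secondary, routine, point is handling the passage from the supremum in \eqref{E:positive bound extn} to an actually realized choice, which as noted above costs only a constant factor.
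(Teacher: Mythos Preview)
Your argument has two related problems, one structural and one in the decay step.

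\textbf{Structural mismatch.} The proposition fixes the height threshold at step $k$ to be exactly $2^k$: in \eqref{partition:f} the only freedom is the choice of interval $\tau^k$, and the chip is $f^{>k-1}$ restricted to $\tau^k\cap\{|f|<2^k\lambda_\gamma(\tau^k)^{-1/p}\}$. Your construction instead selects a free threshold exponent $n_k$ together with $\tau^k$, so your $f^k$ (with cutoff $2^{n_k}$) is not the object defined in the statement. Even if the rest of your argument were airtight, you would be proving a variant proposition, not this one.

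\textbf{The pigeonhole step.} For your own $f^k$, the claim $\|f^{k+1}\|_{L^p(\lambda_\gamma)}\le k^{-1/p}$ is not justified: your rule maximizes $2^{-cn}\|\cdot\|_{L^p(\lambda_\gamma)}^\theta$, not the $L^p(\lambda_\gamma)$ mass, so the sequence $\|f^k\|_{L^p(\lambda_\gamma)}$ need not be monotone, and the fact that at most $k$ of the first $k$ masses exceed $1/k$ says nothing about the $(k+1)$-st one. (A valid argument along your lines would instead observe that the supremum $s_k$ on the right of \eqref{E:positive bound extn}, applied to $f^{>k-1}$, is monotone in $k$, and that near-maximality forces $\|f^k\|_{L^p(\lambda_\gamma)}^p\gtrsim s_k^{p/\theta}$; summability plus monotonicity then gives $s_k\lesssim k^{-\theta/p}$. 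But you did not make this argument.)

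In the paper the growing threshold $2^k$ is precisely what drives the comparison. One selects $\tau^k$ greedily to maximize $\|f^k\|_{L^p(\lambda_\gamma)}$ with the prescribed cutoff $2^k$, and then compares the remainder $f^{>2K}$ simultaneously against the $K{+}1$ steps $K,K{+}1,\ldots,2K$. Since each of those steps uses a threshold $\ge 2^K$ and $|f^{>2K}|\le|f^{>K+j-1}|$ pointwise, the quantity
\[
R_K:=\sup_{\tau\in\scriptI}\bigl\|f^{>2K}\rchi_{\tau\cap\{|f|<2^K\lambda_\gamma(\tau)^{-1/p}\}}\bigr\|_{L^p(\lambda_\gamma)}
\]
is bounded by each $\|f^{K+j}\|_{L^p(\lambda_\gamma)}$, whence $KR_K^p\le 1$ by disjointness. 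Splitting the supremum in \eqref{E:positive bound extn} (applied to $f^{>2K}$) into $n\le K$ (controlled by $R_K^\theta$) and $n>K$ (controlled by $2^{-c\theta K}$) gives $\|\scriptE_\gamma f^{>2K}\|_q\lesssim K^{-\theta/p}$. The crucial feature is that the fixed threshold $2^k$ furnishes $K{+}1$ independent comparisons with chips that have already been removed, which is what yields the uniform rate.
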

\begin{proof}
Multiplying by a constant if needed, we may assume that $\|f\|_{L^p(\lambda_\gamma)} = 1$. Employing the dominated convergence theorem, given $f^{> k - 1}$, we may select some $\tau^k \in \scriptI$ to maximize $\|f^k\|_{L^p(\lambda_\gamma)}$. With the sequence $\{f^k\}$ and $\{f^{> k}\}$ in~\eqref{partition:f}, for each positive integer $K$ let us denote
$$
R_K := \sup_{\tau \in \scriptI} \|f^{> 2 K} \rchi_{\tau \cap \{|f| < 2^K \lambda_\gamma(\tau)^{- \frac 1p}\}}\|_{L^p(\lambda_\gamma)}.
$$
By the maximality property of $\tau^k$ and the fact that the sequence $|f^{>k}|$ is (pointwise) decreasing, $R_K \leq \|f^{K + j}\|_{L^p(\lambda_\gamma)}$, for each $0 \leq j \leq K$. By mutual disjointness of $\{\supp(f^k)\}$, we have $K R_K^p \leq 1$. By Proposition~\ref{P:positive bound extn} this yields
$$
\|\scriptE_\gamma f^{> 2 K}\|_q \lesssim \max (2^{- c_p \theta K}, K^{- \frac{\theta}{p}}) \lesssim K^{- \frac{\theta}{p}}.
$$
This completes the proof with $\rho_k := C k^{- \frac{\theta}{p}}$.  
\end{proof}

Finally, we will need the following variant of Proposition~\ref{P:positive bound extn} in the next section.  

\begin{lemma}\label{L : large scriptE at a point}
Let $\gamma : \R \to \Rd$ be a polynomial of degree $N$. Then there exists $0 < \theta = \theta_d < 1$ such that
\begin{equation} \label{E:refined Drury 2}
\|\scriptE_\gamma f \|_{d^2 + d} \lesssim \sup_{\tau\in \scriptI} \Big(\lambda_\gamma(\tau)^{- \frac 12} \|\scriptE_\gamma (f \rchi_{\tau})\|_\infty \Big)^{1 - \theta} \|f\|_{L^2(\lambda_\gamma)}^\theta.
\end{equation}
\end{lemma}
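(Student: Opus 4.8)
The plan is to run the same machinery as in the proof of Proposition~\ref{P:positive bound extn}, but at the endpoint exponent $q = d^2+d$ (where $p=2$), and to replace the $L^p(\lambda_\gamma)$-norm of a characteristic-function piece by an $L^\infty$-norm of the extension of that piece. First I would observe that, exactly as in Proposition~\ref{P:positive bound extn}, it suffices by the decomposition \eqref{decompose R}, the triangle inequality, and Lemma~\ref{L:reduce dyadic} (which is stated for general $q > \tfrac{d^2+d+2}2$, hence applies at $q = d^2+d$) to prove \eqref{E:refined Drury 2} for $f$ supported on a single dyadic piece $A_{j,k}$ on which $L_\gamma \sim 1$; then, by \cite{BiswasStovall20} one may assume $L_\gamma$ is nonconstant, and by affine invariance, rescaling, translation, and a further finite decomposition one reduces to the normalized situation \eqref{E:moment plus err}, with $\gamma$ an $\eps_{d,N}$-perturbation of the moment curve on $[0,1]$. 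Under this reduction the target interval $[0,1]$ satisfies $\lambda_\gamma([0,1]) \sim 1$, so the sup over $\tau \in \scriptI$ on the right of \eqref{E:refined Drury 2} controls in particular $\|\scriptE_\gamma(f\rchi_{[0,1]})\|_\infty$ up to an admissible constant; more generally one tracks subintervals as in the chip decomposition.

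Next I would feed this into the bilinear framework. As in the proof of Proposition~\ref{P:positive bound extn}, decompose $Q \setminus \Delta = \bigcup_{R \in \scriptR} R$, form the pieces $f_R := (\prod_{j=1}^d \scriptE_\gamma f_{R,j})\widehat{\ }$ supported in $\Gamma(R)$, and invoke Lemma~\ref{L:convex approx} (valid for the perturbed curve \eqref{E:moment plus err}) together with Lemma~\ref{L:TVV} to obtain, for $q = d^2+d > \tfrac{d^2+d+2}2$,
\begin{equation*}
\|\scriptE_\gamma(f\rchi_{[0,1]})\|_{d^2+d}^d = \|\sum_{R \in \scriptR}\prod_{j=1}^d \scriptE_\gamma f_{R,j}\|_{\frac{d^2+d}d} \lesssim \Bigl(\sum_{\sigma \in S_d}\sum_{R \in \scriptR_\sigma}\|\prod_{j=1}^d \scriptE_\gamma f_{R,j}\|_{\frac{d^2+d}d}^{(\frac{d^2+d}d)'}\Bigr)^{1/(\frac{d^2+d}d)'}.
\end{equation*}
Now for each individual $R$ I would apply the standard bilinear/multilinear extension estimate for the moment curve (and its small perturbations) --- the Drury-type estimate underlying \cite[Lemma~4.1]{Stovall16} --- which at this endpoint gives $\|\prod_{j=1}^d \scriptE_\gamma f_{R,j}\|_{\frac{d^2+d}d} \lesssim \prod_{j=1}^d \|f_{R,j}\|_{L^2(\lambda_\gamma)}$, each factor localized to the $j$-th coordinate projection of $R$, which is an interval in $\scriptI$ after rescaling. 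Using the $L^2$-orthogonality afforded by the finite overlap of the frequency supports $\Gamma(R)$ (Lemma~\ref{L:convex approx}) and summing in $R$ exactly as in the Tao--Vargas--Vega argument, one arrives at a bound of the form $\|\scriptE_\gamma(f\rchi_{[0,1]})\|_{d^2+d} \lesssim \|f\|_{L^2(\lambda_\gamma)}$; the point of the lemma, however, is to keep one factor in $L^\infty$.

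The mechanism for extracting the $L^\infty$ gain is to peel off one of the $d$ factors $\scriptE_\gamma f_{R,j}$ in $L^\infty$ and estimate the remaining $d-1$ factors in $L^2$-based norms via Hölder and the $(d-1)$-linear version of the estimate above. Concretely, $\|\prod_{j=1}^d \scriptE_\gamma f_{R,j}\|_{\frac{d^2+d}d} \leq \|\scriptE_\gamma f_{R,d}\|_\infty \cdot \|\prod_{j=1}^{d-1}\scriptE_\gamma f_{R,j}\|_{\frac{d^2+d}d}$, and since $\|\scriptE_\gamma f_{R,d}\|_\infty \lesssim \lambda_\gamma(\tau_R)^{-1/2}\|\scriptE_\gamma(f\rchi_{\tau_R})\|_\infty \cdot (\lambda_\gamma(\tau_R)^{1/2})$ with the $\lambda_\gamma(\tau_R)^{1/2}$ factor absorbed into the $L^2$ normalization of the remaining factors, one gets an overall factor $\sup_\tau \lambda_\gamma(\tau)^{-1/2}\|\scriptE_\gamma(f\rchi_\tau)\|_\infty$ out front, raised to a power; the leftover $(d-1)$-fold product is then summed in $R$ against the $\|f\|_{L^2(\lambda_\gamma)}$-norm using the same orthogonality. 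Interpolating the resulting estimate with the already-known $\|\scriptE_\gamma(f\rchi_{[0,1]})\|_{d^2+d}\lesssim \|f\|_{L^2(\lambda_\gamma)}$ (or directly tuning the Hölder split) produces the exponents $1-\theta$ and $\theta$ with some $0 < \theta = \theta_d < 1$, as claimed. The main obstacle I anticipate is purely bookkeeping: making sure that when one factor is moved to $L^\infty$ the remaining $(d-1)$-linear sum over $R \in \scriptR_\sigma$ still has enough decay in the scale separation $|m - m'|$ to converge (this is where the quantitative finite-overlap bound of Lemma~\ref{L:convex approx} and the dyadic structure of $\scriptA$ are essential), and that the various rescalings from the reduction to \eqref{E:moment plus err} are compatible with the $\scriptI$-localization built into the statement. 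Since, as the authors note, this parallels \cite[Proposition~3.1]{BiswasStovall20} and \cite{Stovall16}, I would cite those arguments for the routine summation and focus the write-up on the $L^\infty$ factorization step.
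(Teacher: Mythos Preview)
Your reduction to a single $A_{j,k}$ via Lemma~\ref{L:reduce dyadic}, the normalization to \eqref{E:moment plus err}, and the appeal to the almost-orthogonality framework through Lemma~\ref{L:TVV} all match the paper's route; both you and the paper ultimately defer the core estimate to \cite{BiswasStovall20} (the paper cites Proposition~3.10 there, not 3.1).

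There is, however, a genuine gap in your sketched $L^\infty$ factorization. Peeling off a single factor $\|\scriptE_\gamma f_{R,d}\|_\infty$ and invoking a ``$(d-1)$-linear version of the estimate above'' does not work: for a curve in $\R^d$ the Plancherel/Jacobian mechanism underlying the Drury-type inequality requires all $d$ factors simultaneously, and there is no useful bound on $\|\prod_{j<d}\scriptE_\gamma f_{R,j}\|_{d+1}$ in terms of $\prod_{j<d}\|f_{R,j}\|_{L^2(\lambda_\gamma)}$. Your displayed bound $\|\scriptE_\gamma f_{R,d}\|_\infty \lesssim \lambda_\gamma(\tau_R)^{-1/2}\|\scriptE_\gamma(f\rchi_{\tau_R})\|_\infty \cdot \lambda_\gamma(\tau_R)^{1/2}$ is also tautological and supplies no gain. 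And ``interpolating with the already-known $L^2\to L^{d^2+d}$ estimate'' cannot manufacture a refinement at the \emph{same} Lebesgue exponent.

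The paper's introduction of an auxiliary exponent $q_1 < q_2 = d^2+d$ with $\theta = q_1/q_2$ close to $1$ signals the intended mechanism (carried out in \cite[Proposition~3.10]{BiswasStovall20}): one applies H\"older to the \emph{entire} $d$-fold product,
\[
\bigl\|\textstyle\prod_j\scriptE_\gamma f_{R,j}\bigr\|_{q_2/d} \le \bigl\|\textstyle\prod_j\scriptE_\gamma f_{R,j}\bigr\|_\infty^{1-\theta}\,\bigl\|\textstyle\prod_j\scriptE_\gamma f_{R,j}\bigr\|_{q_1/d}^\theta.
\]
The $L^\infty$ factor is bounded by $\bigl(\sup_\tau \lambda_\gamma(\tau)^{-1/2}\|\scriptE_\gamma(f\rchi_\tau)\|_\infty\bigr)^d$ times a power of $2^{-m}$, since each $f_{R,j}$ is $f$ restricted to an interval in $\scriptI$ of $\lambda_\gamma$-measure $\sim 2^{-m}$. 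The $L^{q_1/d}$ factor sits strictly below the $d$-linear endpoint and therefore carries enough slack for the $\ell^{(q_2/d)'}$-sum over $R\in\scriptR$ to close against $\|f\|_{L^2(\lambda_\gamma)}$. All $d$ factors remain together throughout; no $(d-1)$-linear input is needed or available.
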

\begin{proof}
Set $q_2 : = d^2 + d$, which is the exponent corresponding to $p = 2$ on the scaling line $q = \frac{d^2 + d}2 p'$.  Fix $q_1 < d^2 + d$ with $\theta := \frac{q_1}{d^2 + d}$ sufficiently close to $1$ for later purposes.  By Lemma~\ref{L:reduce dyadic} and the triangle inequality, it suffices to prove for $f$ supported on one of the $A_{j,k}$.  

From here, the proof is exactly the same as the proof of Proposition 3.10 of \cite{BiswasStovall20}, with the only change being incorporation of Lemma~\ref{L:TVV} of this article.
\end{proof}


\section{Spatial decomposition of frequency localized pieces} \label{S:profile}


In the previous section, we showed that most of the $L^q$ norm of the extension of an $L^p$ function $f$ comes from a few pieces, ``chips,'' of $f$ that are very well localized to intervals. However, these ``chips'' may still oscillate incoherently, leading to dispersion of their extensions.  In this section, we will decompose a sequence of chips into spatially coherent parts.  In a later section, we will go back and complete the frequency decomposition by proving that for an extremizing sequence, all chips have commensurable length scales and locations.  This represents a significant reorganization of the argument relative to previous works in this vein, such as \cite{BiswasStovall20, FlockStovall22, StovallParab}, and allows for a much simpler implementation of the next step by minimizing our reliance on pointwise geometric inequalities, which are not available when we need to consider interactions between different intervals $I_j$ in the decomposition \eqref{decompose R}.   

Applying the results of the previous section to a normalized extremizing sequence $\{f_n\}$ will produce sequences of localized chips $\{f_n^k\}$, adapted to sequences of intervals $\{\tau_n^k\}$.  After rescaling one such sequence $\{f_n^k\}$ and passing to a subsequence, we may assume that 1 is the left endpoint of $\tau_n^k$ for all $k$ and either $\tau_n^k$ is eventually (essentially) constant in $n$, or $\tau_n^k$ shrinks down to $\{1\}$ as $n \to \infty$.  In the former case, we will (eventually) deduce compactness of the original sequence modulo symmetries, while in the latter case, the curve is approximable by an affine copy of the moment curve, \eqref{E : local approx}.  Since analogous approximations are valid in the general polynomial case, we state and prove a more general result than what is needed immediately in the monomial case. 

\begin{proposition}\label{P : profiles}
Let $\gamma_L : \R \to \Rd$ be a polynomial of degree $N$ with $\lambda_{\gamma_L} \not \equiv 0$ and let  $\{\gamma_n : \R \to \Rd\}$ be a sequence of polynomials of degree at most $N$ with coefficients converging to those of $\gamma_L$.  Let $q = \frac{d^2 + d}2 p' > \frac{d^2 + d + 2}2$ and $R>0$. Let $\{f_n\}$ be a sequence of measurable functions with $|f_n| \leq R \rchi_{[- R, R]}$ for each $n$. Then, there exist sequences $\{\phi^j\}_{j \in \N} \subseteq  L^p(\lambda_{\gamma_L})$ and $\{x_n^j\}_{n,j \in \N} \subseteq \R^d$, such that, along a subsequence of $\{f_n\}$, the following hold with
$
r_n^J := f_n - \sum_{j = 1}^J e^{- i x_n^j \cdot \gamma_n} \phi^j$,  $J \in \N.
$
\begin{enumerate}[\rm(i)]
\item $e^{i x_n^j \cdot \gamma_n} f_n \rightharpoonup \phi^j$ weakly, as $n \to \infty$, $j \in \N$;
\item $\lim_n |x_n^j - x_n^{j'}| = \infty$ for each $j \neq j'$;
\item $\lim_n \|\scriptE_{\gamma_n} f_n\|_q^q - \sum_{j = 1}^J \|\scriptE_{\gamma_L} \phi^j\|_q^q - \|\scriptE_{\gamma_n} r_n^J\|_q^q = 0$ for each $J$;
\item $\sum_{j=1}^\infty \|\phi^j\|_{L^p(\lambda_{\gamma_L})}^{\tilde p} \leq \liminf_n \|f_n\|_{L^p(\lambda_{\gamma_n})}^{\tilde p}$, \quad where $\tilde p := \max (p, p')$;
\item $\lim_{J \to\infty} \limsup_n \|\scriptE_{\gamma_n} r_n^J\|_q = 0$.
\end{enumerate}
\end{proposition}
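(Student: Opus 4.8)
The plan is to establish Proposition~\ref{P : profiles} via the standard iterative profile-extraction scheme, adapted to the setting where the underlying curve $\gamma_n$ varies with $n$. First I would set up the extraction of a single profile: given a bounded sequence $\{r_n^{J}\}$ (starting with $r_n^0 := f_n$) with $|r_n^J| \lesssim_{R,J} \rchi_{[-R,R]}$, I would look at the quantity $\eta_J := \limsup_n \|\scriptE_{\gamma_n} r_n^J\|_q$. If $\eta_J = 0$ we stop. Otherwise, the key extraction lemma — which I expect to be the technical heart of the argument — must produce a sequence of translation parameters $x_n^{J+1} \in \R^d$ such that, along a subsequence, $e^{i x_n^{J+1} \cdot \gamma_n} r_n^J \rightharpoonup \phi^{J+1}$ weakly in (say) $L^2$ of the relevant interval, with $\|\scriptE_{\gamma_L}\phi^{J+1}\|_q \gtrsim \eta_J$, i.e. the profile carries a definite fraction of the leftover mass. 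This is where Lemma~\ref{L : large scriptE at a point} enters: since $\|\scriptE_{\gamma_n} r_n^J\|_{d^2+d}$ (controlling, by interpolation with the fixed $L^q$ bound, the $L^q$ norm from below up to a power) is bounded below, that lemma gives an interval $\tau_n \in \scriptI$ with $\lambda_{\gamma_n}(\tau_n)^{-1/2}\|\scriptE_{\gamma_n}(r_n^J \rchi_{\tau_n})\|_\infty$ bounded below; but $\tau_n \subseteq [-R,R]$ and $\lambda_{\gamma_n}$ is eventually $\sim 1$ on $[-R,R]$ off a finite set (since $\lambda_{\gamma_L}\not\equiv 0$), so we may pass to a subsequence with $\tau_n \to \tau_\infty$, and the sup in $\|\scriptE_{\gamma_n}(r_n^J\rchi_{\tau_n})\|_\infty$ is nearly attained at some $x_n^{J+1}$. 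Setting $\phi^{J+1}$ to be the weak limit of $e^{ix_n^{J+1}\cdot\gamma_n} r_n^J$ (these functions are bounded in $L^2([-R,R])$, hence have a weakly convergent subsequence), one tests against a suitable approximate identity / the indicator of $\tau_\infty$ and uses Lemma~\ref{L:op convergence} (applied to $\gamma_n \to \gamma_L$ and the bounded compactly supported test functions) to transfer the lower bound on $\|\scriptE_{\gamma_n}(r_n^J\rchi_{\tau_n})\|_\infty$ to a lower bound on $|\scriptE_{\gamma_L}\phi^{J+1}|$ at a point, and then Lemma~\ref{L : large scriptE at a point} again (or just the extension bound) to get $\|\scriptE_{\gamma_L}\phi^{J+1}\|_q \gtrsim \eta_J^{C}$ for some admissible power.

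Next I would iterate: define $r_n^{J+1} := r_n^J - e^{-ix_n^{J+1}\cdot\gamma_n}\phi^{J+1}$ and repeat, passing to successive subsequences and diagonalizing at the end so that all weak limits in (i) and all quantities in (ii)–(iv) hold along one common subsequence. Item (ii), $|x_n^j - x_n^{j'}| \to \infty$, is forced: if $|x_n^j - x_n^{j'}|$ stayed bounded along a subsequence, then after translating, $e^{i(x_n^{j'}-x_n^j)\cdot\gamma_n}$ would converge (locally uniformly, by the coefficient convergence $\gamma_n\to\gamma_L$ and boundedness of the shift) to some $e^{iy\cdot\gamma_L}$, which would force $\phi^{j'} = e^{iy\cdot\gamma_L}\phi^j$ up to the fact that $\phi^{j'}$ is extracted as the weak limit of $e^{ix_n^{j'}\cdot\gamma_n}r_n^{j'-1}$ where $r_n^{j'-1}$ has already had the $\phi^j$-component subtracted — so that weak limit would have to be $0$, contradicting $\|\scriptE_{\gamma_L}\phi^{j'}\|_q \gtrsim \eta_{j'-1} > 0$. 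Item (iii), the asymptotic $L^q$ Pythagorean decomposition, follows from the Brézis–Lieb lemma together with the orthogonality in (ii): expanding $\|\scriptE_{\gamma_n}r_n^{J-1}\|_q^q = \|\scriptE_{\gamma_L}\phi^J + e^{i x_n^J\cdot\gamma_n}(\cdots)\|$ type identities, the cross terms vanish in the limit because the spatial translates diverge (one uses here that $\scriptE_{\gamma_L}\phi^j$ and $\scriptE_{\gamma_n}r_n^j$ are $L^q$ functions and translating one by $x_n^j - x_n^{j'}\to\infty$ decouples them — a routine $L^q$ orthogonality argument, combined with Lemma~\ref{L:op convergence} to replace $\scriptE_{\gamma_n}\phi^j$ by $\scriptE_{\gamma_L}\phi^j$). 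Summing (iii) over $J$ and using that $\|\scriptE_{\gamma_n}r_n^J\|_q^q \geq 0$ gives $\sum_j \|\scriptE_{\gamma_L}\phi^j\|_q^q \lesssim \limsup_n\|\scriptE_{\gamma_n}f_n\|_q^q < \infty$.

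For item (iv), the summability of the $L^p(\lambda_{\gamma_L})$-norms of the profiles with exponent $\tilde p = \max(p,p')$, the mechanism is the refined-Fatou / weak-limit inequality for $L^p$ norms under weak convergence and diverging translates. Since $e^{ix_n^j\cdot\gamma_n} r_n^{j-1} \rightharpoonup \phi^j$ and the translates diverge, one has, on the fixed compact interval $[-R,R]$ where all these functions live and where $\lambda_{\gamma_n}\to\lambda_{\gamma_L}$ in a controlled way off a finite set, the inequality $\sum_{j=1}^J \|\phi^j\|_{L^p(\lambda_{\gamma_L})}^{\tilde p} + \liminf_n\|r_n^J\|_{L^p(\lambda_{\gamma_n})}^{\tilde p} \leq \liminf_n\|f_n\|_{L^p(\lambda_{\gamma_n})}^{\tilde p}$; this is the standard consequence of the Brézis–Lieb lemma in $L^p$ when $p\ge 2$ (use $\tilde p = p$) and its dual/convexity form when $p<2$ (use $\tilde p = p' $), applied one profile at a time — the only new point is checking that the convergence $\lambda_{\gamma_n}\to\lambda_{\gamma_L}$ uniformly off a shrinking neighborhood of the finite zero set of $\lambda_{\gamma_L}$ does not spoil the orthogonality, which it does not because we may further localize away from that zero set with negligible error. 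Letting $J\to\infty$ gives (iv). Finally, (v): the $\eta_J = \limsup_n\|\scriptE_{\gamma_n}r_n^J\|_q$ are nonincreasing in $J$; if $\eta_J\not\to0$ then at every stage the extracted profile satisfies $\|\scriptE_{\gamma_L}\phi^{J+1}\|_q\gtrsim \eta_J^C \geq \eta_\infty^C > 0$, so infinitely many profiles have $L^q$-extension norm bounded below by a fixed positive constant, contradicting $\sum_j\|\scriptE_{\gamma_L}\phi^j\|_q^q < \infty$ from (iii); hence $\eta_J\to 0$, which is exactly (v). The main obstacle, as flagged, is the single-profile extraction lemma — specifically ensuring that the lower bound on the $L^\infty$ norm of $\scriptE_{\gamma_n}(r_n^J\rchi_{\tau_n})$ survives the passage to the weak limit and the change of curve $\gamma_n\rightsquigarrow\gamma_L$, which is precisely what Lemma~\ref{L : large scriptE at a point} and Lemma~\ref{L:op convergence} are designed to make possible; the rest is bookkeeping of subsequences and standard Brézis–Lieb-type orthogonality.
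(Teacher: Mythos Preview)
Your architecture---iterative extraction via Lemma~\ref{L : large scriptE at a point}, divergence of the parameters, Br\'ezis--Lieb on the extension side for (iii), and termination via a summability contradiction for (v)---is essentially the paper's (the extraction step is formalized there as Lemma~\ref{L : positive weak lim}, which gives a lower bound on $\|\phi\|_{L^2(\lambda_{\gamma_L})}$ rather than on $\|\scriptE_{\gamma_L}\phi\|_q$). One structural difference worth noting: the paper first runs the entire argument at $p=2$, where the Hilbert space structure yields an exact $L^2$ decoupling identity \eqref{E : L^2 decoup} and hence both (iv) and (v) directly; it then reuses those same profiles and parameters for general $p$, obtaining (v) for the target $q$ by H\"older interpolation with the already-established $q_2=d^2+d$ case rather than by your summability-contradiction route.

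The genuine gap is in (iv). Your claim that it follows from ``the standard Br\'ezis--Lieb lemma when $p\geq 2$'' and a ``dual/convexity form when $p<2$'' does not go through: the profiles here differ by \emph{modulations} $e^{-ix_n^j\cdot\gamma_n}$, which preserve $|f|$ pointwise (hence every $L^p$ norm) and produce neither a.e.\ convergence nor disjoint supports, so Br\'ezis--Lieb is unavailable on the $L^p(\lambda)$ side. Iterating weak lower semicontinuity one profile at a time only gives $\|\phi^j\|_{L^p}\leq \liminf\|f_n\|_{L^p}$ for each $j$ separately, not the summed inequality; and the appearance of the exponent $\tilde p=\max(p,p')$ rather than $p$ already signals that something beyond pointwise convexity is needed. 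The paper's proof of (iv) is substantively different: it introduces smoothed projections $\pi_n^j f := a\ast(b\,e^{ix_n^j\cdot\gamma_n}f)$ (with $a,b$ smooth and compactly supported, $\int a = a(0) = 1$, $|b|\leq 1$) so that $\pi_n^j(f_n\lambda_{\gamma_n}^{1/p}) \to a\ast(b\,\phi^j\lambda_{\gamma_L}^{1/p})$ strongly in $L^p$, and then proves $\limsup_n\bigl\|(\pi_n^j)_{j\leq J}\bigr\|_{L^p\to l^{\tilde p}(L^p)} \leq 1$ by complex interpolation between the trivial $L^1,L^\infty$ endpoints and the $L^2$ endpoint. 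At $p=2$ this reduces (by duality) to $\|\pi_n^j(\pi_n^{j'})^*\|_{L^2\to L^2}\to 0$ for $j\neq j'$, which follows from Schur's test and stationary phase, using $|x_n^j-x_n^{j'}|\to\infty$ and $\lambda_{\gamma_L}\not\equiv 0$. This interpolation device is the missing ingredient in your outline.
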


\begin{proof}
A key step in developing a nontrivial decomposition (i.e. satisfying conclusion (v)) will be to establish the existence of a non-zero weak limit for bounded $L^2$ sequences with non-negligible extensions. 

\begin{lemma}\label{L : positive weak lim}
Let the polynomials $\gamma_n, \gamma_L$ satisfy the hypotheses of Proposition~\ref{P : profiles} and let $\eps, D, R>0$. Consider a sequence $\{f_n\}$ of measurable functions such that for each $n$,
$$
|f_n| \leq R \rchi_{[- R, R]}, \quad \|f_n\|_{L^2(\lambda_{\gamma_n})} \leq D, \quad \|\scriptE_{\gamma_n} f_n\|_{d^2 + d} \geq \eps.
$$
Then, there exists a sequence $\{x_n \in \Rd \}$ so that along a subsequence, the modulated sequence $\{e^{i x_n \cdot \gamma_n} f_n\}$ converges weakly to some $\phi \in L^2(\lambda_{\gamma_L})$ with
\begin{equation}\label{positive lower bound for weak lim}
\|\phi\|_{L^2(\lambda_{\gamma_L})} \gtrsim \eps \bigl(\tfrac\eps D\bigr)^C,
\end{equation}
with $C:=\tfrac{\theta}{1-\theta}$, $\theta$ as in Lemma~\ref{L : large scriptE at a point}.
\end{lemma}

Conversely, we note that if $|f_n| \leq R\rchi_{[-R,R]}$ and $\scriptE_{\gamma_n} f_n \to 0$ in $L^{d^2+d}$, then $f_n \rightharpoonup 0$ weakly, by basic distributional considerations.  

\begin{proof}
By our hypotheses $\lambda_{\gamma_n} \to \lambda_{\gamma_L}$ uniformly $[- R, R]$. Applying Lemma~\ref{L : large scriptE at a point} to each polynomial $\gamma_n$, there exist $0 < \theta < 1$ and $\tau_n \subset [- R, R]$ so that for each $n$
\begin{equation}\label{large dyadic interval for uniform approximation: non zero limit}
\eps\bigl(\tfrac{\eps}D\bigr)^C \lesssim \lambda_{\gamma_n} (\tau_n)^{- \frac 12} \|\scriptE_{\gamma_n} (f_n\rchi_{\tau_n})\|_\infty.
\end{equation}

By our hypothesized bounds on $f_n$ and H\"older's inequality, $|\tau_n|$ is bounded below.  By Heine-Borel, after passing to a subsequence, there exists a fixed interval $\tau \subseteq [-R,R]$ of positive length such that $|\tau_n \Delta \tau| \to 0$, where $\Delta$ denotes the symmetric difference of sets.  
Applying H\"older's inequality again and $\gamma_n \to \gamma_L$, we obtain from \eqref{large dyadic interval for uniform approximation: non zero limit} for $n$ sufficiently large that 
$$
\lambda_{\gamma_L}(\tau)^{1/2}\eps\bigl(\tfrac{\eps}D\bigr)^C \lesssim  \|\scriptE_{\gamma_n} (f_n\rchi_{\tau})\|_\infty.
$$
Thus there exists $\{x_n\} \in \Rd$ such that for $n$ sufficiently large
\begin{equation}\label{large scriptE value at 0}
|\scriptE_{\gamma_n} (e^{i x_n \cdot \gamma_n}f_n\rchi_\tau) (0)| = |\scriptE_{\gamma_n} (f_n\rchi_\tau) (x_n)| \gtrsim \lambda_{\gamma_L}(\tau)^{\frac 12} \eps\bigl(\tfrac\eps D\bigr)^C. 
\end{equation}

Being $L^2$-bounded, along a subsequence, $e^{i x_n \cdot \gamma_n} f_n \rightharpoonup \phi$ weakly. Since $|f_n| \leq R \rchi_{[- R, R]}$, $\gamma_n \to \gamma_L$, and the triangle inequality,
\begin{align*}
|\scriptE_{\gamma_L}(\phi)_\tau (0)| = \lim_n |\scriptE_{\gamma_n} (e^{i x_n \cdot \gamma_n}f_n\rchi_\tau) (0)| \gtrsim \lambda_{\gamma_L}(\tau)^{\frac 12} \eps \bigl(\tfrac\eps D\bigr)^C. 
\end{align*}
On the other hand, by H\"older's inequality
$$
|\scriptE_{\gamma_L}(\phi \rchi_\tau)(0)| \leq \|\phi\rchi_\tau\|_{L^1(\lambda_{\gamma_L})} \leq \lambda_{\gamma_L}(\tau)^{1/2}\|\phi\|_{L^2(\lambda_{\gamma_L})},
$$
and the lower bound in \eqref{positive lower bound for weak lim} follows.  
\end{proof}

Following the outline from \cite{StovallParab}, we will first prove Proposition~\ref{P : profiles} in the case $p=2,q=q_2$, wherein we can take advantage of the Hilbert space structure of $L^2(\lambda_\gamma)$, and then we will use that case to deduce the result for general $p,q$.  

\textbf{Proof of Proposition~\ref{P : profiles} for $p = 2$:}  For $p = 2$ we will prove something slightly stronger, replacing (iv) by the improvement,
\begin{equation}\label{E : L^2 decoup}
\lim_n \|f_n\|_{L^2(\lambda_{\gamma_n})}^2 - \sum_{j = 1}^J \|\phi^j\|_{L^2(\lambda_{\gamma_L})}^2 -  \|r_n^J\|_{L^2(\lambda_{\gamma_n})}^2 = 0, \quad J \in\N.
\end{equation}

We set  $D_0:=\limsup_n \|f_n\|_{L^2(\lambda_{\gamma_n})}$ and $\eps_0:=\limsup_n \|\scriptE_{\gamma_n} f_n\|_{q_2}$, and note that $\eps_0 \lesssim D_0$ by Theorem~\ref{T:extn polynomial} and $D_0 < \infty$ by H\"older. 
Along a subsequence, $\|f_n\|_{L^2(\lambda_\gamma)} \lesssim D_0$ and $\|\scriptE_{\gamma_n} f_n\|_{q_2} \gtrsim \eps_0$ for all sufficiently large $n$. 

If $\eps_0 > 0$, we apply Lemma~\ref{L : positive weak lim} to extract a profile $\phi^1 \in L^2(\lambda_{\gamma_L})$ with
$$
\|\phi^1\|_{L^2(\lambda_{\gamma_L})} \gtrsim \eps_0\bigl(\tfrac{\eps_0}{D_0}\bigr)^C,
$$
together with $\{x_n^1 \in \Rd\}$ so that $e^{i x_n^1 \cdot \gamma_n} f_n \rightharpoonup \phi^{1}$ weakly. If $\eps_0 = 0$, we set $\phi^1 \equiv 0$ and choose $\{x_n^1\}$ arbitrarily, noting the remark after Lemma~\ref{L : positive weak lim}. Irrespective of the value of $\eps_0$,
$$
\lim_n \|f_n\|_{L^2(\lambda_{\gamma_L})}^2 - \|\phi^1\|_{L^2(\lambda_{\gamma_L})}^2 - \|f_n - e^{- i x_n^1 \cdot \gamma_n} \phi^1\|_{L^2(\lambda_{\gamma_L})}^2 = 0.
$$ 
As $|f_n| \leq R \rchi_{[- R, R]}$ and $\gamma_n \to \gamma_L$ locally uniformly, this implies that
\begin{align}\label{L^2 decoupling for weak limits}
\lim_n \|f_n\|_{L^2(\lambda_{\gamma_n})}^2 - \|\phi^1\|_{L^2(\lambda_{\gamma_L})}^2 - \|f_n - e^{i x_n^1 \cdot \gamma_n} \phi^1\|_{L^2(\lambda_{\gamma_n})}^2 = 0.
\end{align}
In other words, \eqref{E : L^2 decoup}, our improved version of (iv) holds when $J=1$.  That conclusions (i, ii) hold for $J = 1$ is immediate. To prove that conclusion (iii) holds for $J = 1$, we decompose
\begin{align}\label{expand and apply Brezis-Lieb}
\scriptE_{\gamma_n} (e^{i x_n^1 \cdot \gamma_n} f_n) = \scriptE_{\gamma_L} \phi^1 + \scriptE_{\gamma_n} (e^{i x_n^1 \cdot \gamma_n} f_n - \phi^1) + \Big(\scriptE_{\gamma_n} \phi^1 - \scriptE_{\gamma_L} \phi^1\Big).     
\end{align}
The third term in above RHS goes to zero in $L^{q_2}$ by Lemma~\ref{L:op convergence} as $\gamma_n \to \gamma_L$ locally uniformly. The second term converges to zero pointwise by the uniform compact supports of the $f_n$, uniform convergence of $\gamma_n \to \gamma_L$ on those supports, and weak convergence of $e^{ix_n^1 \cdot \gamma_n} f_n$ to $\phi^1$.  Since $\scriptE_{\gamma_n} (e^{i x_n^1 \cdot \gamma_n} f_n) = \scriptE_{\gamma_n} f_n (\cdot + x_n^1)$, an application of the generalized Brezis-Lieb lemma~\cite[Lemma $3.1$]{Frank_etl_2016} yields
\begin{equation}\label{apply Brezis-Lieb for cauchy}
\lim_n \|\scriptE_{\gamma_n} f_n\|_{q_2}^{q_2} - \|\scriptE_{\gamma_L} \phi^1\|_{q_2}^{q_2} - \|\scriptE_{\gamma_n} r_n^1\|_{q_2}^{q_2} = 0.  
\end{equation}
Thus, conclusion (iii) holds for $J = 1$.

To complete the profile decomposition, we use induction and Cantor's diagonalization argument (to ensure a nonempty subsequence).  With $r_n^0:=f_n$, at each stage, we apply the preceding to $r_n^J:=r_n^{J-1} -e^{-ix_n^J\cdot \gamma_n}\phi^J$ to extract a subsequence,  $\phi^J$, and $\{x_n^J\}$.  If we ever have $\eps_J:=\limsup_n \|\scriptE_{\gamma_n} r_n^J\|_{q_2}=0$, then we set $\phi^J \equiv 0$ and choose $\{x_n^J\}$ arbitrarily, subject to the condition that conclusion (ii) must hold relative to previously selected $\{x_n^j\}$.

We claim that the conclusion of the proposition holds with these sequences $\{\phi^j\}, \{x_n^j\}$. Conclusion (iii) and the improved version of conclusion (iv), \eqref{E : L^2 decoup}, hold by induction.  

Suppose that conclusion (ii) fails.  We fix the least index $j$ for which $x_n^j-x_n^{j'}$ converges along a subsequence for some $j'<j$ and let $x^{jj'}$ denote the limit.  Since $j$ was least, for all other pairs $k \neq k' \leq j$, $|x_n^k-x_n^{k'}| \to \infty$ along this subsequence.  In particular, by stationary phase and $\gamma_n \to \gamma_L$ locally uniformly, multiplication by $e^{i(x_n^j-x_n^{j'})\cdot \gamma_n}$ converges to multiplication by $e^{ix_n^{jj'}\cdot \gamma_L}$ in the strong operator topology and multiplication by $e^{i(x_n^j-x_n^k)\cdot \gamma_n}$ converges to zero in the weak operator topology for all $j'\neq k < j$.  Therefore, 
\begin{align*}
\phi^j
& = \wklim e^{i(x_n^j-x_n^{j'})\cdot \gamma_n}(e^{ix_n^{j'}\cdot \gamma_n}r_n^{j'}-\phi^{j'} + \sum_{k=j'+1}^{j-1}e^{i(x_n^{j}-x_n^k)\cdot \gamma_n}\phi^k)=0.
\end{align*}
But for $\phi^j=0$, we chose $\{x_n^j\}$ so that conclusion (ii) would hold, a contradiction.   

Similar arguments give conclusion (i).

Conclusion (v) follows from conclusion (iii) and \eqref{E : L^2 decoup}. Indeed, both 
$$ 
D_J:=\limsup_n \|r_n^J\|_{L^2(\lambda_{\gamma_n})}\qtq{and} \eps_J:=\limsup_n \|\scriptE_{\gamma_n} r_n^J\|_{q_2}
$$ are decreasing in $J$, and hence both have a limit.  If (v) fails, $\lim_J \eps_J =\eps>0$, yielding a contradiction by \eqref{positive lower bound for weak lim}.

{\bf Proof for general p :} For  $1 < p < \frac{d^2 + d + 2}2$ and $p \neq 2$,  we let $\{\phi^j\}$ and $\{x_n^j\}$ be the sequences produced in the $p = 2$ case above. We claim that conclusions (i) - (v) hold with these same profiles $\{\phi^j\}$ and sequences $\{x_n^j\}$ for $L^p(\lambda_\gamma)$. 

Conclusions (i) and (ii) follow directly from our construction of the profiles. Application of the generalized Br{\'e}zis-Lieb lemma~\cite[Lemma $3.1$]{Frank_etl_2016} gives conclusion (iii) by identical reasoning to the case $q = q_2$. We choose $\frac{d^2 + d + 2}2 < q_1 < \infty$ so that $q$ lies between $q_1$ and $q_2$. Since conclusion (iii) holds for $q_1$, we have that $\limsup_n \|\scriptE_{\gamma_n} r_n^J\|_{q_1}$ is uniformly bounded in $J$.  (The bound depends on $R$.) With $0 < \theta < 1$ defined by $\frac 1q =: \frac{1 -\theta}{q_1} + \frac {\theta}{q_2}$, by H\"older's inequality
$$
\limsup_J \limsup_n \|\scriptE_{\gamma_n} r_n^J\|_q \lesssim_{q_1, R,\gamma_L} \limsup_J \limsup_n \|\scriptE_{\gamma_n} r_n^J\|_{q_2}^\theta = 0. 
$$ 
This establishes conclusion (v) for $q$.

Finally we are left to prove conclusion (iv). Let us fix $J\in\N$. We let 
\begin{equation}
g_n : = f_n \lambda_{\gamma_n}^{\frac 1p} \in L^p(\R), \quad \psi^j := \phi^j \lambda_{\gamma_L}^{\frac 1p} \in L^p(\R), \quad 1 \leq j \leq J.    
\end{equation}
Since $|f_n| \leq R \rchi_{[- R, R]}$ and $\lambda_{\gamma_n} \to \lambda_{\gamma_L}$ locally uniformly, $e^{i x_n^j \cdot \gamma_n} g_n \rightharpoonup \psi^j$ weakly. Fixing a positive $\eps$, we choose even non-negative $a : \R \to \R$ with $a(0) = \int a = 1$, and $b : \R \to \R$ with $|b| \leq 1$, both compactly supported and smooth, so that 
\begin{equation}\label{choose test function b}
\|\psi^j - a \ast (b \psi^j)\|_p < \eps, \qtq{for each} 1 \leq j \leq J.
\end{equation}
For $\pi_n^j f := a \ast (b e^{i x_n^j \cdot \gamma_n} f)$, using the weak convergence $e^{i x_n^j \cdot \gamma_n} g_n \rightharpoonup \psi^j$ and compact supports of $a, b, g_n$, by the dominated convergence theorem,
$$
\lim_n \|\pi_n^j g_n - a \ast (b \psi^j)\|_p = 0, \qtq{for each} 1 \leq j \leq J.
$$
Let $P_n^J := \{\pi_n^j\}_{j = 1}^J$. Letting $\eps \to 0$, it suffices to show that
\begin{equation*}
\limsup_n \|P_n^J\|_{L^p \to l^{\tilde p}(L^p)} \leq 1, \quad 1 < p < \frac{d^2 + d +2}2.
\end{equation*}
Since the above holds trivially for $p = 1, \infty$, by complex interpolation~\cite{Stein56} and duality, it suffices to prove that $\limsup_n \|(P_n^J)^\ast\|_{l^2(L^2) \to L^2} \leq 1$. For $F = \{f_j\}_{j = 1}^J$, 
$$
\|({P_n^J})^\ast F\|_2^2 \leq \sum_{j = 1}^J \|f_j\|_2^2 + \sum_{1 \leq j \neq j' \leq J} \langle \pi_n^j (\pi_n^{j'})^\ast f_{j'}, f_j \rangle,
$$
by Young's convolution inequality. Consequently, it suffices to show that 
\begin{equation}\label{pi pi'^* goes to zero}
\lim _n \|\pi_n^j (\pi_n^{j'})^\ast\|_{L^2 \to L^2} = 0, \quad j \neq j'.
\end{equation}

Note that $(\pi_n^j)^\ast \phi := b e^{i x_n^j \cdot \gamma_n} (a \ast \phi)$ and so
$$
\pi_n^j (\pi_n^{j'})^\ast f (t) = \int \Phi_n^{j, j'} (t, s) f(s) \,\, ds, \quad t \in \R,
$$
where the kernel $\Phi_n^{j, j'}$ is given by
$$
\Phi_n^{j, j'} (t, s) := \int a(t - u) e^{i (x_n^{j'} - x_n^j) \cdot \gamma_n (u)} b(u)^2 a(u - s) \,\, du.
$$
Since $\lambda_{\gamma_L} \not \equiv 0$, by stationary phase using conclusion (ii), uniform (in $n$) compact supports of $\Phi_n^{j, j'}$ and the local uniform convergence $\gamma_n \to \gamma_L$,
$$
\lim_n \sup_t \|\Phi_n^{j, j'} (s, t)\|_{L_s^1} = 0, \qtq{and} \lim_n \sup_s \|\Phi_n^{j, j'} (s, t)\|_{L_t^1} = 0.
$$
Applying Schur's test implies~\eqref{pi pi'^* goes to zero}. The proof of Proposition~\ref{P : profiles} is now complete.
\end{proof}


\section{Uniform frequency localization} \label{S:chunks}


In this section, we resume the frequency localization initiated in Proposition~\ref{P : uniform dyadic decay} by proving that when $f$ is a near extremizer, all of the intervals $\tau^k$ making significant contribution to $\scriptE_{\gamma} f$ are compatible with one another.  More precisely, we will prove the following. 

\begin{proposition} \label{P:freq loc}
Let $\{f_n\}$ be an $L^p(\lambda_\gamma)$-normalized extremizing sequence.  Then, along a subsequence, there exist a sequence $\{S_n\}$ of dilations such that either
\begin{equation}\label{E:cpct freq loc}
\lim_{R \to \infty} \lim_{n \to \infty} \|(S_n f_n) \rchi_{\{|t|>R\}} \rchi_{\{|S_nf_n|>R\}}\|_{L^p(\lambda_\gamma)} = 0    
\end{equation}
or
\begin{equation}\label{E:conc freq loc}
    \lim_{R \to \infty} \lim_{n \to \infty} \|(S_nf_n)\rchi_{\{\dist(t,[1])>R\delta_n\}}\rchi_{\{|S_nf_n|>R\delta_n^{-1/p}\}}\|_{L^p(\lambda_\gamma)} = 0,
\end{equation}
for some sequence $\delta_n \searrow 0$. Here, $[1] := 
\{1,-1\}$, if $\gamma$ is even or odd, and $\{1\}$, otherwise.
\end{proposition}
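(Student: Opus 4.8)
The plan is to first extract, via Propositions~\ref{P : uniform dyadic decay} and \ref{P : profiles}, a single ``chip'' $f_n^{k_0}$ carrying a nontrivial portion of $\|\scriptE_\gamma f_n\|_q$, and then to show that after applying a suitable dilation $S_n$, \emph{all} chips $f_n^k$ carrying nonnegligible extension mass are asymptotically commensurable with $f_n^{k_0}$ — in length scale and location — to the normalized interval $\tau^{k_0}$. Concretely, given the normalized extremizing sequence $\{f_n\}$, apply Proposition~\ref{P : uniform dyadic decay} to each $f_n$ to obtain chips $\{f_n^k\}$ adapted to intervals $\{\tau_n^k\} \subseteq \scriptI$, with $\|\scriptE_\gamma f_n^{>k}\|_q \leq \rho_k$. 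Since $\{f_n\}$ is extremizing (so $\|\scriptE_\gamma f_n\|_q \to B_{\gamma,p} > 0$), the first chip $f_n^1$ has $\liminf_n \|\scriptE_\gamma f_n^1\|_q \gtrsim 1$; let $\tau_n^1 \in \scriptI_{m_n}$ and let $S_n$ be the dilation \eqref{E : def scaling} normalizing $\tau_n^1$ so that (after passing to a subsequence) the left endpoint of $S_n\tau_n^1$ is $1$ and either $S_n\tau_n^1$ is eventually essentially constant — say $[1,1+c]$ — or $|S_n\tau_n^1| = \delta_n \searrow 0$. Replacing $f_n$ by $S_n f_n$, we are in the setting of a fixed interval (approach~\eqref{E:cpct freq loc}) or a shrinking interval (approach~\eqref{E:conc freq loc}), where in the latter case the rescaled curves $\gamma_{(1,\delta_n)}$ converge to $\gamma_0$ by \eqref{E : local approx}, so Proposition~\ref{P : profiles} applies.

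The core of the argument is the ``one chunk'' statement: I would group the chips into \emph{chunks} according to asymptotic commensurability of their normalized scales and locations (two chips in the same chunk if, after applying $S_n$, their intervals have comparable length and distance, up to bounded multiplicative factors; the antipodal pairing $t \mapsto -t$ is folded into a single chunk when $\gamma$ is odd or even, via the symmetry $T(-1)^{-1}T(1) = -D_{-1}$). One then shows an extremizing sequence can have only one chunk. This uses two ingredients. First, \emph{asymptotic orthogonality} of incommensurable chips: if $f_n^k$ and $f_n^{k'}$ live in different chunks, then $\|\scriptE_\gamma f_n^k + \scriptE_\gamma f_n^{k'}\|_q^q - \|\scriptE_\gamma f_n^k\|_q^q - \|\scriptE_\gamma f_n^{k'}\|_q^q \to 0$; this is where I would invoke the reorganization advertised in the introduction — rather than the pointwise geometric inequality \eqref{E:geom ineq} (unavailable across the intervals $I_j$ of \eqref{decompose R}), I would first run Proposition~\ref{P : profiles} on each chunk (after rescaling) to decompose it into spatially coherent profiles with well-separated spatial centers, and then the orthogonality between distinct chunks reduces to separation of supports of the profile extensions (a stationary-phase/weak-convergence argument, essentially the $q$-th power Brezis–Lieb splitting). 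Second, \emph{convexity}: since $q > p$, if the $L^p$-mass of $f_n$ is split nontrivially across $\geq 2$ chunks with masses $a_i$, $\sum a_i^p \leq 1$, while the $L^q$ extension norm is (asymptotically) at most $(\sum \|\scriptE_\gamma(\text{chunk}_i)\|_q^q)^{1/q} \leq B_{\gamma,p}(\sum a_i^q)^{1/q} < B_{\gamma,p}$, contradicting the extremizing property — so all but one chunk has asymptotically vanishing mass.

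Given the single chunk, I would conclude as follows. All chips making nonnegligible contribution have normalized intervals of comparable scale and location to $[1]$ (either a fixed interval or shrinking at rate $\delta_n$). The chips with negligible extension are controlled by $\rho_k \to 0$: choosing $k$ large, $\|\scriptE_\gamma f_n^{>k}\|_q$ is small, hence (by the improved estimate, Proposition~\ref{P:positive bound extn}, applied in the contrapositive, or directly by the argument of Proposition~\ref{P : uniform dyadic decay}) the $L^p(\lambda_\gamma)$-mass of $f_n$ outside the union of the first $k$ (commensurable) chips, together with the mass where $|f_n|$ exceeds the height threshold $2^k\lambda_\gamma(\tau^k)^{-1/p}$, is small. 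Translating the height and support thresholds through the dilation $S_n$ (which sends $\lambda_\gamma(\tau_n^1) \sim 2^{m_n}$ to $\sim 1$ and the interval to $[1,1+c]$ or $[1,1+\delta_n]$) yields exactly the truncation in \eqref{E:cpct freq loc} in the fixed-interval case, or, using $\lambda_\gamma \sim 1$ near $1$ together with the approximate scaling \eqref{E : local approx}, the truncation in \eqref{E:conc freq loc} with the same $\delta_n$, after letting first $n \to \infty$ then $R \to \infty$ (equivalently $k \to \infty$). I expect the main obstacle to be the asymptotic-orthogonality step across distinct chunks — specifically, handling chips localized to \emph{different} intervals $I_j$ of the Dendrinos–Wright decomposition \eqref{decompose R}, where no pointwise multilinear bound is available; this is precisely why the profile decomposition of Section~\ref{S:profile} is carried out first, reducing the cross term to separation of a finite collection of spatially localized profile extensions, which is then a routine consequence of conclusion (ii) of Proposition~\ref{P : profiles} and stationary phase.
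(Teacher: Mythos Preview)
Your overall strategy matches the paper's: apply Proposition~\ref{P : uniform dyadic decay}, rescale so the left endpoint of $\tau_n^1$ is $1$, declare an index $k$ ``good'' if $\tau_n^k$ is commensurable with $\tau_n^1$ in the sense of \eqref{E:good}, prove an orthogonality statement for good/bad pairs via the profile decomposition, and finish by the convexity argument of \cite[Proposition~4.1]{BiswasStovall20}.

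The gap is in your orthogonality step. You write that after profiling each chip the cross-chunk orthogonality ``reduces to separation of supports of the profile extensions'' and is ``a routine consequence of conclusion~(ii) of Proposition~\ref{P : profiles} and stationary phase.'' Neither is the actual mechanism. Conclusion~(ii) separates the spatial centers $x_n^j$ \emph{within a single chip}; it says nothing about profiles coming from two different chips, whose extensions live in genuinely different coordinate systems determined by the distinct rescalings $D_{\eps_n^k} T(a_n^k)^t D_{a_n^k}^{\vec l}$. The paper (Lemma~\ref{L:no bad k}) instead proves the bilinear estimate $\|(\scriptE_\gamma f_n^k)(\scriptE_\gamma f_n^{k'})\|_{q/2} \to 0$ by first using Proposition~\ref{P : profiles} and Lemma~\ref{L:op convergence} to replace each extension by a fixed $L^q$ function composed with an affine map, and then by density reducing to $G^1 = G^k = \rchi_{B_1(0)}$. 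At that point the problem is purely to bound the volume of the intersection of two ellipsoids, which amounts to showing that the transition matrix $D_{\eps_n^1} T(a_n^k)^t T(1)^{-t} D_{(\eps_n^1)^{-1}}$ has operator norm tending to infinity. This requires a case split on how \eqref{E:good} fails ($\tau_n^k$ too short or too long; $a_n^k \to 0$ or $\pm\infty$; or $a_n^k$ bounded away from $0$ with $(\eps_n^1)^{-1}\dist(a_n^k,[1]) \to \infty$), and in the last case an explicit computation showing a matrix entry is $\gtrsim (\eps_n^1)^{-1}|\gamma'(a_n^k)\wedge\gamma'(1)|$ --- which is exactly where the set $[1]$ and the even/odd distinction enter. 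No stationary phase or Br\'ezis--Lieb splitting appears here; the heart of the lemma is this linear-algebraic degeneration argument, which your proposal does not supply.
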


As we will see, the advantage of already having the decomposition in Proposition~\ref{P : profiles} is that it will allow us to prove certain orthogonality estimates without use of the geometric inequality \eqref{E:geom ineq}, which may fail when the $t_i$ are not all contained in the same interval $I_j$ from \eqref{decompose R}. 

\begin{proof}
We apply Proposition~\ref{P : uniform dyadic decay} to each $f_n$, and let $\tau_n^k$ denote the resulting intervals and $f_n^k$ the corresponding functions.  By construction and our assumption that $\{f_n\}$ is extremizing, $\liminf \|f_n^1\|_p > 0$. By rescaling as in~\eqref{E : def scaling} (potentially by a negative number) and passing to a subsequence, we may assume that $\tau_n^1 = 1+[0,\eps_n^1]$ and that either $\eps_n^1$ converges to some $0<\eps^1<\infty$, or  $\eps_n^1 \searrow 0$, as $n \to \infty$.

In fact, in the former case, we may assume that $\tau_n^1 \equiv \tau^1$ for all sufficiently large $n$. Indeed, $|\tau_n^1 \Delta \tau^1| \to 0$, where $\tau^1 := 1 + [0,\eps^1]$, so by H\"older's inequality, $\|\scriptE_{\gamma}(f_n^1-\tilde f_n^1)\|_q \to 0$, where $\tilde f_n^1$ is constructed analogously to $f_n^1$, but with $\tau^1$ in place of $\tau_n^1$. 

We will prove that \eqref{E:cpct freq loc} occurs when $\tau_n^1$ is eventually constant and that \eqref{E:conc freq loc} occurs when $\eps_n^1 \searrow 0$.  Thanks to the uniform decay in Proposition~\ref{P : uniform dyadic decay}, it suffices to prove that, along a subsequence, each $\{\tau_n^k\}$, $\tau_n^k = [a_n^k,a_n^k(1+\eps_n^k)]$, $\eps_n^k \lesssim 1$,  
arising in subsequent stages of the decomposition is either negligible, i.e., $\lim_{n \to \infty} \|f_n^k\|_{L^p(\lambda_\gamma)} = 0$, or is commensurable with $\{\tau_n^1\}$ in the sense that the following all hold as $n \to \infty$: 
\begin{equation}\label{E:good}
\begin{gathered} 
\lim_{n \to \infty}  (\eps_n^1)^{-1}\lambda_\gamma(\tau_n^k) \in (0,\infty), \qquad 
\lim_{n \to \infty} a_n^k \in \R,\\ 
 \lim_{n \to \infty} (\eps_n^1)^{-1} \dist([1],a_n^k) \in [0,\infty).
\end{gathered}
\end{equation}
(We may assume that these, and all other limits arising in the proof, exist by passing to a subsequence.)

The preceding reduction leads us to say that an index $k$ is good if the conditions in \eqref{E:good} all hold and bad otherwise.  In other words, we want to show that every bad $k$ is negligible (in the sense that $\{\tau_n^k\}$ is negligible).  

\begin{lemma}\label{L:no bad k}
If $k$ is bad and $k'$ is good, 
\begin{equation} \label{E:no bad k}
\lim_{n \to \infty} \|(\scriptE_\gamma f_n^k)(\scriptE_\gamma f_n^{k'})\|_{q/2} = 0.
\end{equation}
\end{lemma}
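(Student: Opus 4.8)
The plan is to prove Lemma~\ref{L:no bad k} by splitting into cases according to which of the three conditions in \eqref{E:good} fails for the bad index $k$, reducing each case to an asymptotic orthogonality statement between the two extensions. Write $\tau_n^k = [a_n^k, a_n^k(1+\eps_n^k)]$ and rescale by the dilation $S_n$ associated to $\tau_n^1$ so that $\tau_n^1$ is either fixed or shrinks to $1$; the key point is that after this normalization, $f_n^{k'}$ (for $k'$ good) rescales, via the approximate symmetry \eqref{E : scriptE(a, delta) preserves L^q}, to a sequence of functions supported on a fixed bounded interval, majorized by a fixed $L^p$-normalized indicator, and on a convergent sequence of curves $\gamma_n^{(k')} \to \gamma_0$ (or $\to \gamma_L$). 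Meanwhile the rescaled $f_n^k$ lives on an interval whose length scale, location, or distance to the antipodal set $[1]$ diverges relative to that of $\tau_n^1$.

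The main tool will be Lemma~\ref{L:op convergence} together with the profile decomposition of Proposition~\ref{P : profiles}. First I would apply Proposition~\ref{P : profiles} (with $J$ chosen large, using (v)) to decompose the rescaled good piece into finitely many modulated profiles $e^{-ix_n^j\cdot\gamma_n}\phi^j$ plus an error with negligible extension, and do the same for the rescaled bad piece if it too rescales to a bounded-interval sequence. Then $\|(\scriptE_\gamma f_n^k)(\scriptE_\gamma f_n^{k'})\|_{q/2}$ is controlled by a finite sum of cross terms $\|(\scriptE_{\gamma_L}\phi^j)(e^{i(x_n^j - y_n^i)\cdot\gamma_n}\scriptE_{\gamma_L}\psi^i)\|_{q/2}$ (after translating out the modulations), plus errors. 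Each such cross term tends to $0$ by stationary phase / Riemann--Lebesgue once we know the relevant frequency supports separate or the spatial translations diverge, which is exactly what badness of $k$ provides: if $\lambda_\gamma(\tau_n^k)/\eps_n^1 \to 0$ or $\to \infty$, the length scales are incommensurable, so after matching scales one curve degenerates (torsion $\to 0$) and $\scriptE$ of the corresponding piece tends to $0$ in $L^q$ by Lemma~\ref{L:op convergence}; if $a_n^k \to \infty$ or $a_n^k$ escapes the support, the spatial centers $\gamma(a_n^k) - \gamma(a_n^{k'})$ diverge, giving decay of the overlap integral; if $\dist([1], a_n^k)/\eps_n^1 \to \infty$, one is precisely in the regime where the two modulation frequencies $\gamma'(a_n^k)$ and $\gamma'(a_n^{k'})$ (or their antipodal reflections) become incommensurable, so stationary phase again kills the cross terms.

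Concretely, the cleanest route is probably to argue by contradiction: if \eqref{E:no bad k} fails along a subsequence, then $\liminf_n \|\scriptE_\gamma f_n^k\|_q > 0$, so after rescaling by the dilation adapted to $\tau_n^k$ and applying Proposition~\ref{P : profiles}, $f_n^k$ itself carries a nontrivial profile; simultaneously $f_n^{k'}$ carries a nontrivial profile on its own (commensurable) scale. One then computes $\|(\scriptE_\gamma f_n^k)(\scriptE_\gamma f_n^{k'})\|_{q/2}$ in whichever of the two rescaled frames, and in that frame the other piece's extension, after the change of variables \eqref{E : scriptE(a, delta) preserves L^q}, is the extension along a sequence of curves that either collapses ($L_\gamma \to 0$) — handled by Lemma~\ref{L:op convergence} giving $\scriptE \to 0$ — or lives at a spatial/frequency translation diverging to infinity, handled by stationary phase (the same Schur-test style argument used for \eqref{pi pi'^* goes to zero} in the proof of Proposition~\ref{P : profiles}). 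The main obstacle, I expect, is the bookkeeping in the incommensurable-scale case: keeping careful track of how the torsion matrices $T(a_n^k)$ and the dilations $D_{\delta}$ interact so that the rescaled bad curve genuinely degenerates and one may invoke Lemma~\ref{L:op convergence}, rather than producing a new nondegenerate limiting curve. The antipodal case ($\dist([1],a_n^k)/\eps_n^1 \to \infty$ for odd or even $\gamma$) is the most delicate of the subcases, since there the frequencies need not separate in magnitude, and one must use that the phases $x\cdot(\gamma(a_n^k)-\gamma(a_n^{k'}))$ are genuinely non-stationary after the appropriate dilation — this is where the "two antipodal points" framework advertised in the introduction does its work.
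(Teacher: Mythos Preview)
Your reduction via Proposition~\ref{P : profiles} and Lemma~\ref{L:op convergence} matches the paper: each piece is rescaled to its own frame, decomposed into profiles, and the cross term is reduced (up to scaling factors) to $\|G^1(L_n^1 x - x_n^1)\,G^k(L_n^k x - x_n^k)\|_{L^{q/2}_x}$ with $G^1, G^k \in L^q$ fixed and $L_n^1, L_n^k$ the linear maps $D_{\eps_n^\bullet} T(1)^t D_{a_n^\bullet}^{\vec l}$ coming from \eqref{E : scriptE(a, delta) preserves L^q}. But the mechanisms you then propose for why this vanishes do not work. Any unimodular factor $e^{i\theta(x)}$ drops out of $|\cdot|^{q/2}$, so stationary phase and Riemann--Lebesgue are irrelevant to a pointwise product; the modulations carry no information here. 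Neither rescaled curve degenerates---both $\gamma_n^1$ and $\gamma_n^k$ converge to $\gamma_0$ by \eqref{E : local approx}---so you cannot invoke Lemma~\ref{L:op convergence} to force $\scriptE \to 0$. And the Schur-test argument behind \eqref{pi pi'^* goes to zero} bounds an $L^2\to L^2$ operator norm, not the $L^{q/2}$ norm of a product; it does not transfer.

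The paper's actual mechanism is purely geometric, and is closer to the ``torsion matrices and dilations interact'' remark buried at the end of your proposal than to anything oscillatory. By density one takes $G^1 = G^k = \rchi_{B_1(0)}$; the cross term is then (a scaling factor times) the volume of the intersection of two ellipsoids, maximized when both are centered at $0$. After changing variables to the frame of $L_n^1$ this becomes \eqref{E:no bad k simple}, controlled either by the prefactor $(\lambda_\gamma(\tau_n^k)/\eps_n^1)^{1/p'}$ or by the operator norm of $M_n := D_{a_n^k \eps_n^k} T(a_n^k)^t T(1)^{-t} D_{(\eps_n^1)^{-1}}$. If the length scales are incommensurable the prefactor handles it via H\"older; otherwise one shows $\|M_n\|\to\infty$ by an explicit matrix computation. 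In the most delicate (antipodal) subcase, $a_n^k \to a \in \R\setminus\{0\}$ with $(\eps_n^1)^{-1}\dist(a_n^k,[1]) \to \infty$, the key lower bound is $\|M_n\| \gtrsim (\eps_n^1)^{-1}|\gamma'(a_n^k)\wedge\gamma'(1)|$, which diverges precisely because $a_n^k$ stays away from $[1]$ at scale $\eps_n^1$. This linear-algebraic computation---not any phase cancellation---is the missing ingredient.
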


As shown in (e.g.) the proof of \cite[Proposition 4.1]{BiswasStovall20}, by convexity ($p<q$) and basic estimates, we may now conclude the proof of Proposition~\ref{P:freq loc}, modulo the proof of Lemma~\ref{L:no bad k}.    
\end{proof}

\begin{proof}[Proof of Lemma~\ref{L:no bad k}]
We begin with several reductions. First, we observe that if $\tau_n^{k'}$ is good, then rescaling so that $\tau_n^{k'}=1+[0,\eps_n^{k'}]$ and then interchanging the indices 1 and $k'$ does not change the good/bad intervals. Thus it suffices to prove the lemma in the case $k'=1$.  

Given $k$, we consider the sequence of polynomials 
$$
\gamma_n^k:= D_{(\eps_n^k)^{-1}} T(1)^{-1}[\gamma(\eps_n^k(s-1)+1)-\gamma(1)].
$$
After a change of variables, some algebraic manipulations, and recalling the construction in Proposition~\ref{P : profiles}, we see that
\begin{equation} \label{E:Egammank}
\scriptE_\gamma f_n^k(x) = |a_n^k|^{\frac{2|\vec l|}{d(d+1)p'}}(\eps_n^k)^{1/p'}\exp(iD_{a_n^k}x \cdot \gamma(1))\scriptE_{\gamma_n^k}\tilde f_n^k(D_{\eps_n^k} T(1)^t D_{a_n^k}^{\vec l} x),
\end{equation}
where 
$$
\tilde f_n^k(s):=\lambda_\gamma(\tau_n^k)^{1/p}f_n^k(a_n^k(\eps_n^k(s-1)+1))
$$
obeys the bound
$|\tilde f_n^k| \lesssim 2^k \rchi_{[1,2]}$.  (We note that $\lambda_\gamma(\tau_n^k) \sim |a_n^k|^{\frac{2|\vec l|}{d(d+1)}}\eps_n^k$.)

By Taylor's theorem, $\gamma_n^k \to \gamma_0(\cdot-1)$.  Thus, by Proposition~\ref{P : profiles}, specifically conclusion (v), $\scriptE_{\gamma_n^k}\tilde f_n^k$ can be approximated in $L^q$ to arbitrary precision by a finite sum of the form
$$
\sum_{j=1}^J \scriptE_{\gamma_n^k} (e^{-ix_n^{k,j}\cdot \gamma_n^k}\phi^{k,j}).
$$
The arguments above of course also apply with $k=1$, so by the triangle inequality and H\"older's, the lemma follows once we prove \eqref{E:no bad k} in the case $k$ bad, $k'=1$, and
$$
\tilde f_n^1 = e^{-ix_n^1 \cdot \gamma_n^1}\phi^1, \qquad \tilde f_n^k = e^{-ix_n^k \cdot \gamma_n^k}\phi^k.
$$

By Lemma~\ref{L:op convergence}, with $G^k:=\scriptE_{\gamma_0(\cdot-1)} \phi^k$
$$
\|\scriptE_{\gamma_n^k}(e^{-ix_n^k \cdot \gamma_n^k}\phi^k)(x) - G_k(x-x_n^k)\|_{L^q_x} \to 0,
$$
Yet another application of the triangle inequality allows us to replace $\scriptE_{\gamma_n^k}\tilde f_n^k$ with $G^k(\cdot-x_n^k)$ in \eqref{E:Egammank}.  (Similarly with $k \rightsquigarrow 1$.) We look to \eqref{E:Egammank} for help unwrapping all of these reductions.  Recalling that $a_n^1=1$ and $\lambda_\gamma(\tau_n^k) = |a_n^k|^{\frac{2|\vec l|}{d(d+1)}}\eps_n^k$, it remains to prove that
$$
\lim_{n \to \infty} \|(\eps_n^1)^{1/p'}G^1(D_{\eps_n^1}T(1)^t x - x_n^1) \lambda_\gamma(\tau_n^k)^{1/p'}G^k(D_{\eps_n^k} T(1)^t D_{a_n^k}^{\vec l} x-x_n^k)\|_{q/2} = 0,  
$$
for arbitrary bad $k$, $G^1,G^k \in L^q$ and $\{x_n^1\},\{x_n^k\} \subseteq \R^d$. By standard density arguments and positivity, we may in fact assume that $G^1$ and $G^k$ equal the characteristic function of some $B_R(0)$. By rescaling in $x$, we may further assume that $R=1$.   

We have thus reduced matters to bounding the volumes of intersections of pairs of ellipsoids (times some scaling factor). Such volumes are maximized when both ellipsoids are centered at 0, so we may assume $x_n^1 \equiv x_n^k \equiv 0$. Next, recalling that $(\det D_\eps)^{2/q} = \eps^{2/p'}$, we may change variables so that all symmetries act on $G^k$.  

Our reductions are finally complete;  by the identity $T(t) = D_t^{\vec l} T(1) D_{t^{- 1}}$, it remains to prove that whenever $k$ is bad,
\begin{equation} \label{E:no bad k simple}
\lim_{n \to \infty} \left\|\bigl(\tfrac{\lambda_\gamma(\tau_n^k)}{\eps_n^1}\bigr)^{1/p'}\rchi_{B_1(0)}(D_{a_n^k\eps_n^k}T(a_n^k)^tT(1)^{-t}D_{(\eps_n^1)^{-1}} y)\right\|_{L^{q/2}_y(B_1(0))} = 0.
\end{equation}

We now refer back to \eqref{E:good}. If $k$ is bad because  
$
\lim_{n \to \infty}{\lambda_\gamma(\tau_n^k)}/{\eps_n^1} = 0,
$ 
i.e., $\tau_n^k$ is too short, 
\eqref{E:no bad k simple} follows from H\"older's inequality.  The roles of 1 and $k$ are essentially symmetric (1 is bad with respect to $k$), so we similarly conclude that \eqref{E:no bad k simple} holds when $\tau_n^k$ is too long.  We may henceforth assume that $\tau_n^k$ is neither too short nor too long, and hence may ignore the multiplicative factor in front of $\rchi_{B_1(0)}$ in \eqref{E:no bad k simple}.  

Suppose that $a_n^k \to \pm\infty$.  We consider (the column vector)
$$
\omega_n^k:=(\gamma_d'(1),\eps_n^1\gamma_d''(1),\ldots,(\eps_n^1)^d\gamma^{(d)}_d(1)),
$$
where we are differentiating the $d$-th component of $\gamma$. Then  $|\omega_n^k|\sim 1$ and 
$$
D_{a_n^k\eps_n^k}T(a_n^k)^tT(1)^{-t}D_{(\eps_n^1)^{-1}} \omega_n^k = (\eps_n^1)^{-1} D_{a_n^k\eps_n^k} T(a_n^k)^t e_d,
$$
which has norm about $\tfrac{\eps_n^k}{\eps_n^1}(a_n^k)^{l_d}$.  Thus the linear transformation in the argument of $\rchi_{B_1(0)}$ in \eqref{E:no bad k simple} has norm at least (a constant times) $|a_n^k|^{l_d}\eps_n^k/\eps_n^1$.  On the other hand, since $\tau_n^k$ is not too short, 
$
(a_n^k)^{{2|\vec l|}/{d(d+1)}} \eps_n^k/\eps_n^1 
$ 
is bounded below.  Since $|a_n^k| \to \infty$ and 
$$
l_d > \tfrac{2l_d}{d+1} > \tfrac{2|\vec l|}{d(d+1)},
$$
we may conclude that $|a_n^k|^{l_d}\eps_n^k/ \eps_n^1 \to \infty$ as $n \to \infty$. Using Fubini to integrate \eqref{E:no bad k simple} along eigenspaces of the linear operator, the left hand side of \eqref{E:no bad k simple} (recall that $\tfrac{\lambda_\gamma(\tau_n^k)}{\eps_n^1} \sim_k 1$) is bounded by a dimensional constant times
$$
\lim_{n \to \infty} \|D_{a_n^k\eps_n^k}T(a_n^k)^tT(1)^{-t}D_{(\eps_n^1)^{-1}}\|^{-1} = 0.
$$
As in the case when $\tau_n^k$ was too long or short, we may obtain the same conclusion when $|a_n^k| \to 0$.  

We may henceforth assume that $\{a_n^k\}$ converges in $\R \setminus \{0\}$. As $\tau_n^k$ is neither too long nor too short, $\{\eps_n^k/\eps_n^1\}$ converges to a limit in $(0,\infty)$.  Referring back to \eqref{E:good}, we have run out of ways for $k$ to be bad if $\eps_n^1$ is eventually constant, so we may assume that $\eps_n^1 \to 0$.  By taking a slightly larger ball and then dilating, we may neglect compact sequences of dilations in \eqref{E:no bad k simple}, and it remains to prove that
\begin{equation} \label{E:no bad k simpler}
\lim_{n \to \infty} \bigl\|\rchi_{B_1(0)}(D_{\eps_n^1} T(a_n^k)^tT(1)^{-t}D_{(\eps_n^1)^{-1}} y)\bigr\|_{L^{q/2}_y(B_1(0))} = 0 \end{equation}
in the case that 
$$
\lim_{n \to \infty} (\eps_n^1)^{-1}\dist(a_n^k, [1]) = \infty.  
$$

As we've already noted, it suffices to prove that 
$$
\lim_{n \to \infty} \bigl\| D_{\eps_n^1}T(a_n^k)^tT(1)^{-t}D_{(\eps_n^1)^{-1}}   \bigr\| = \infty.  
$$
Using standard matrix manipulations and the fact that $\|T(1)\|,\|T(1)^{-1}\| \sim 1$, 
\begin{align*}
&\bigl\| D_{\eps_n^1}T(a_n^k)^tT(1)^{-t}D_{(\eps_n^1)^{-1}}   \bigr\|
 \sim \max_{i,j} \bigl|\bigl[D_{(\eps_n^1)^{-1}} T(1)^{-1}T(a_n^k)D_{\eps_n^1}\bigr]_{ij}\bigr| \\
 &\qquad
  = \max_{i,j} (\eps_n^1)^{j - i}\bigl|\bigr[T(1)^{-1}T(a_n^k)\bigr]_{ij}\bigr| \geq \max_{2 \leq i \leq d} (\eps_n^1)^{-1}\bigl|\bigl[T(1)^{-1}T(a_n^k)\bigr]_{i1}\bigr| \\
  &\qquad \gtrsim(\eps_n^1)^{-1}\bigl|\gamma'(a_n^k) \wedge \gamma'(1)\bigr| \sim (\eps_n^1)^{-1} \max_{i < j}|a_n^k|^{n_i-1}|(a_n^k)^{n_j-n_i}-1|.
\end{align*}
Thus, taking the limit, with $a := \lim a_n^k \in \R \setminus\{0\}$,
$$
\lim_{n \to \infty} \bigl\| D_{\eps_n^1}T(a_n^k)^tT(1)^{-t}D_{(\eps_n^1)^{-1}}   \bigr\| \gtrsim_a \lim_{n \to \infty} (\eps_n^1)^{-1} \dist(a_n^k,[1]) \to \infty.
$$
\end{proof}


\section{$L^p$ convergence to an extremizer in the case of non-concentration} \label{S:Lp convergence}


Proposition~\ref{P:freq loc} left us with two possible outcomes for an $L^p(\lambda_\gamma)$-normalized extremizing sequence, namely, concentration \eqref{E:conc freq loc}, or nonconcentration \eqref{E:cpct freq loc}.  In this section, we address the latter case, revisiting the profile decomposition \ref{P : profiles} to prove convergence to an extremizer.  More precisely, we will prove the following.

\begin{proposition} \label{P:convergence}
Any normalized extremizing sequence obeying \eqref{E:cpct freq loc} possesses a subsequence that converges in $L^p(\lambda_\gamma)$ modulo symmetries.   
\end{proposition}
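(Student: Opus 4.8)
The plan is to take an $L^p(\lambda_\gamma)$-normalized extremizing sequence $\{f_n\}$ satisfying the non-concentration condition \eqref{E:cpct freq loc}, and to run the profile decomposition of Proposition~\ref{P : profiles} on (a renormalization of) $\{f_n\}$, with the constant sequence of curves $\gamma_n \equiv \gamma$. First I would use \eqref{E:cpct freq loc} together with Proposition~\ref{P : uniform dyadic decay} to reduce, up to an error that is negligible in $L^p(\lambda_\gamma)$ (and whose extension is negligible in $L^q$), to a sequence $\{\tilde f_n\}$ that is uniformly bounded and supported in a fixed bounded interval $[-R,R]$: indeed \eqref{E:cpct freq loc} says that the part of $S_n f_n$ that is either large in modulus or supported far out carries asymptotically no $L^p(\lambda_\gamma)$-mass, and since $\gamma$ is a fixed polynomial with $\lambda_\gamma$ bounded above, truncating both the height of $f_n$ and the spatial support costs only $o(1)$ in $L^p(\lambda_\gamma)$, hence $o(1)$ in $L^q$ after applying $\scriptE_\gamma$ and invoking the $L^p(\lambda_\gamma)\to L^q$ bound (Theorem~\ref{T:extn polynomial}).

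Next I would apply Proposition~\ref{P : profiles} to $\{\tilde f_n\}$ (with $\gamma_L=\gamma_n=\gamma$) to obtain profiles $\{\phi^j\}$ and translation parameters $\{x_n^j\}$ satisfying conclusions (i)--(v). The goal is to show that exactly one profile is nonzero, and that all the $x_n^j$ for that profile may be taken to be $0$. The mechanism is the usual one: by conclusion (iii), $\lim_n \|\scriptE_\gamma \tilde f_n\|_q^q = \sum_j \|\scriptE_\gamma \phi^j\|_q^q$ (using (v) to drop the remainder), while by conclusion (iv), $\sum_j \|\phi^j\|_{L^p(\lambda_\gamma)}^{\tilde p} \le \liminf_n \|\tilde f_n\|_{L^p(\lambda_\gamma)}^{\tilde p} = 1$. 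Writing $c_j := \|\phi^j\|_{L^p(\lambda_\gamma)}$, we get $B_{\gamma,p}^q = \lim_n\|\scriptE_\gamma \tilde f_n\|_q^q = \sum_j \|\scriptE_\gamma\phi^j\|_q^q \le B_{\gamma,p}^q \sum_j c_j^q \le B_{\gamma,p}^q\bigl(\sum_j c_j^{\tilde p}\bigr)^{q/\tilde p}\le B_{\gamma,p}^q$, where the middle inequality uses $q > \tilde p$ (which holds since $q > \frac{d^2+d}2 \ge p'$ and $q> p$ in our range; if $\tilde p = p' $ one still has $q>p'$ because $q = \frac{d^2+d}{2}p' > p'$). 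Since no cancellation is lost, equality forces $\sum_j c_j^q = \bigl(\sum_j c_j^{\tilde p}\bigr)^{q/\tilde p} = 1$, which (as $q>\tilde p$) forces a single $j$, say $j=1$, with $c_1 = 1$ and $c_j = 0$ for $j\neq 1$; and moreover $\|\scriptE_\gamma\phi^1\|_q = B_{\gamma,p}\|\phi^1\|_{L^p(\lambda_\gamma)}$, so $\phi^1$ is an extremizer for $\scriptE_\gamma:L^p(\lambda_\gamma)\to L^q$.

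Then I would upgrade the weak convergence $e^{ix_n^1\cdot\gamma}\tilde f_n \rightharpoonup \phi^1$ to strong $L^p(\lambda_\gamma)$ convergence: from $\|\phi^1\|_{L^p(\lambda_\gamma)} = 1 = \lim_n\|\tilde f_n\|_{L^p(\lambda_\gamma)}$ together with weak convergence and the Radon--Riesz property of $L^p$ for $1<p<\infty$ (on the measure space $(\R,\lambda_\gamma\,dt)$, which is $\sigma$-finite, and noting $L^p(\lambda_\gamma)$ is uniformly convex), we get $e^{ix_n^1\cdot\gamma}\tilde f_n \to \phi^1$ in $L^p(\lambda_\gamma)$. (In the edge case $\tilde p = p'\neq p$ one argues the same way, using that $c_1=1$ and $\liminf_n\|\tilde f_n\|_{L^p(\lambda_\gamma)} = 1$ to get all the mass concentrated in $\phi^1$ and the remainder tending to $0$ in $L^p(\lambda_\gamma)$ via conclusion (iv) applied after passing to the tail, then Radon--Riesz.) Undoing the truncation and the dilations $S_n$, and noting that modulation by $e^{ix_n^1\cdot\gamma}$ is a symmetry of $\scriptE_\gamma:L^p(\lambda_\gamma)\to L^q$, we conclude that a subsequence of $\{f_n\}$ converges in $L^p(\lambda_\gamma)$ modulo symmetries to the extremizer $\phi^1$, as claimed.

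The main obstacle is ensuring that all but one of the $x_n^1$-translated profiles truly vanish, i.e. that the remainder in the profile decomposition is genuinely negligible in $L^q$ and that conclusion (iv) is tight; this requires that the truncation in the first step does not destroy the extremizing property (so the reduction to uniformly bounded, boundedly supported $\tilde f_n$ must be carried out carefully, using \eqref{E:cpct freq loc} and the uniform decay $\rho_k\searrow 0$ of Proposition~\ref{P : uniform dyadic decay}), and that the strict inequality $q > \tilde p$ can be invoked to collapse the sum. A secondary subtlety is that Proposition~\ref{P : profiles} is stated for sequences of curves $\gamma_n\to\gamma_L$; here we simply take $\gamma_n\equiv\gamma$, which trivially satisfies the hypotheses, and then $\lambda_{\gamma_n} = \lambda_{\gamma_L} = \lambda_\gamma$, so conclusion (iv) is literally a statement about $L^p(\lambda_\gamma)$ and no further reconciliation of measures is needed.
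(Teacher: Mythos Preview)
Your overall strategy matches the paper's: truncate, run Proposition~\ref{P : profiles} with the constant sequence $\gamma_n\equiv\gamma$, use $q>\tilde p$ to isolate a single dominant profile, and upgrade weak to strong convergence via uniform convexity. However, there is a genuine gap in the truncation step.

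Condition \eqref{E:cpct freq loc} is a \emph{double} limit: for a \emph{fixed} $R$ it only gives $\limsup_n\|S_nf_n-\tilde f_n^R\|_{L^p(\lambda_\gamma)}=o_R(1)$, which need not vanish as $n\to\infty$. Thus you cannot obtain a single $R$ for which $\lim_n\|\tilde f_n\|_{L^p(\lambda_\gamma)}=1$ and $\lim_n\|\scriptE_\gamma\tilde f_n\|_q=B_{\gamma,p}$ exactly; you only get these up to $o_R(1)$. Your chain of inequalities then yields $\|\phi^{1,R}\|_{L^p(\lambda_\gamma)}=1-o_R(1)$, and Radon--Riesz only gives $\limsup_n\|e^{ix_n^R\cdot\gamma}\tilde f_n^R-\phi^{1,R}\|_{L^p(\lambda_\gamma)}=o_R(1)$. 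Nor can you diagonalize with $R=R_n\to\infty$: then the uniform pointwise bound $|\tilde f_n|\le R\rchi_{[-R,R]}$ required by Proposition~\ref{P : profiles} fails.

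What is missing is a consistency argument across $R$. The paper carries this out: from the two estimates above one gets $\|e^{-ix_n^R\cdot\gamma}\phi^R-e^{-ix_n^{R'}\cdot\gamma}\phi^{R'}\|_{L^p(\lambda_\gamma)}=o_R(1)$ for $R'\ge R$; if $|x_n^R-x_n^{R'}|\to\infty$ along a subsequence, stationary phase makes multiplication by $e^{i(x_n^R-x_n^{R'})\cdot\gamma}$ tend to zero in the weak operator topology, and pairing against $|\phi^R|^{p-2}\overline{\phi^R}$ forces $\|\phi^R\|_{L^p(\lambda_\gamma)}=o_R(1)$, a contradiction. Hence, along a further subsequence, $x_n^R-x_n^{R_0}$ converges for every large $R$; absorbing these limits and modulating $f_n$ once, one obtains a Cauchy family $\{e^{-ix^R\cdot\gamma}\phi^R\}_R$ in $L^p(\lambda_\gamma)$ whose limit is the sought extremizer. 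This step is absent from your proposal and is where the actual work lies.
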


\begin{proof}
Let $\{f_n\}$ be a $L^p(\lambda_\gamma)$-normalized extremizing sequence of $\scriptE_\gamma : L^p(\lambda_\gamma) \to L^q$ that obeys \eqref{E:cpct freq loc}.  Incorporating the dilations into the $f_n$, we may assume that each $S_n$ equals the identity.  For each positive integer $R$, we define $f_n^R := f_n \, \rchi_{[- R, R] \cap \{|f_n| \leq R\}}$.  By the triangle inequality,
$$
\liminf_n \|\scriptE_\gamma f_n^R\|_q = B_{\gamma, p} - o_R(1), 
$$
where $o_R(1) \to 0$ as $R \to \infty$.  For each fixed $R$, applying Proposition~\ref{P : profiles} to the truncated sequence $\{f_n^R\}$ with $\gamma_n \equiv \gamma = \gamma_L$ produces sequences $\{x_n^{j, R} \in \Rd\}$ and profiles $\{\phi_j^R \in L^p(\lambda_\gamma)\}$ so that the following holds. 
\begin{align*}
B_{\gamma, p}^q - o_R(1) & = \liminf_n \|\scriptE_\gamma f_n^R\|_q^q \leq \sum_{j = 1}^{\infty} \|\scriptE_\gamma \phi_j^R\|_q^q \leq B_{\gamma, p}^q \, \sum_{j = 1}^{\infty} \|\phi_j^R\|_{L^p(\lambda_\gamma)}^q
\\
&  \leq B_{\gamma, p}^q \sup_j \|\phi_j^R\|_{L^p(\lambda_\gamma)}^{q - \tilde p} \, \sum_{j = 1}^{\infty} \|\phi_j^R\|_{L^p(\lambda_\gamma)}^{\tilde p} \leq B_{\gamma, p}^q \, \sup_j \|\phi_j^R\|_{L^p(\lambda_\gamma)}^{q - \tilde p}.  
\end{align*}
Combining this with conclusion (iv) in Proposition~\ref{P : profiles} implies that for each $R$ sufficiently large, there exists a unique profile $\phi_j^R$ with $\|\phi_j^R\|_{L^p(\lambda_\gamma)} = 1 - o_R(1)$. Henceforth, we omit the index $j$ from both $\phi^R_j$ and $x^R_j$. By the strict convexity of $L^p(\lambda_\gamma)$ (see also the proof of Lemma $2.11$ in \cite{LL}) and since $e^{i x_n^R \cdot \gamma} f_n^R \rightharpoonup \phi^R$ weakly
$$
\limsup_n \|f_n^R - e^{- i x_n^R \cdot \gamma} \phi^R\|_{L^p(\lambda_\gamma)} = o_R(1).
$$
By Proposition~\ref{P:freq loc} and the triangle inequality 
\begin{equation}\label{f_n close to profile in L^p} 
\limsup_n \|f_n - e^{- i x_n^R \cdot \gamma} \phi^R\|_{L^p(\lambda_\gamma)} = o_R(1),	
\end{equation}
and so, another application of the triangle inequality produces
\begin{equation}\label{e^{ix^Rcdot gamma} phi^R is cauchy}
\sup_{R_1,R_2 \geq R}\limsup_n \| e^{- i x_n^{R_1} \cdot \gamma} \phi^{R_1} - e^{- i x_n^{R_2} \cdot \gamma} \phi^{R_2}\|_{L^p(\lambda_\gamma)} = o_R(1).
\end{equation}

Next, we show that along a further subsequence $\{\phi^R\}$ converges (up to modulations) in $L^p(\lambda_\gamma)$ as $R \to \infty$. To this end, first we observe that along a subsequence, for each fixed large enough $R$ and $R'\geq R$, the sequence $\{x_n^R-x_n^{R'}\}$ converges as $n \to \infty$. To see this, suppose, to the contrary, that  $\|x_n^R - x_n^{R'}\| \to \infty$ for some fixed large $R$, and $R' > R$. Thus, by stationary phase, multiplication by $e^{i (x_n^R - x_n^{R'}) \cdot \gamma}$ converges to zero in the weak operator topology. Consequently, by H\"older's inequality
\begin{equation}\label{E:xnR-xnR'}
\begin{aligned}
1 - o_R(1) = \|\phi^R\|_{L^p(\lambda_\gamma)}^p & \notag =  \lim_n \int \big(\phi^{R} - e^{i (x_n^{R'} - x_n^{R}) \cdot \gamma} \phi^{R'}\big) \bar \phi^{R} \, |\phi^{R}|^{p - 2}
\\
& \lesssim \limsup_n \| e^{- i x_n^{R} \cdot \gamma} \phi^{R} - e^{- i x_n^{R'} \cdot \gamma} \phi^{R'}\|_{L^p(\lambda_\gamma)},
\end{aligned}
\end{equation}
contradicting~\eqref{e^{ix^Rcdot gamma} phi^R is cauchy}. Thus, $\{\|x_n^R - x_n^{R'}\|\}$ is bounded for all large $R, R' > R$. Replacing $\{f_n\}$ by the modulated sequence $\{e^{i x_n^{R_1} \cdot \gamma} f_n\}$ for some fixed $R_1$, and now passing to a subsequence, we may assume that for each fixed large $R$, each $x_n^R \to x^R \in \Rd$ as $n \to \infty$. Consequently, multiplication by $e^{i x_n^R \cdot \gamma}$ converges to multiplication by $e^{i x^R \cdot \gamma}$ in the strong operator topology. Combining this with \eqref{f_n close to profile in L^p} and the triangle inequality
$$
\limsup_n \|f_n - e^{- i x^R \cdot \gamma} \phi^R\|_{L^p(\lambda_\gamma)} = o_R(1).
$$
Applying the triangle inequality once more, this implies that
\begin{equation}\label{E : single profile for norm}
\lim_{R \to \infty} \sup_{R_1,R_2 \geq R}\|e^{- i x^{R_1} \cdot \gamma} \phi^{R_1} - e^{- i x^{R_2} \cdot \gamma} \phi^{R_2}\|_{L^p(\lambda_\gamma)} = 0.
\end{equation}
In summary, both $\{f_n\}$ and $\{e^{- i x^R \cdot \gamma} \phi^R\}$
being Cauchy sequences in $L^p(\lambda_\gamma)$ converge in $L^p(\lambda_\gamma)$ to the same $\phi$, an extremizer of $\scriptE_\gamma : L^p(\lambda_\gamma) \to L^q$. This completes the proof of Proposition~\ref{P:convergence} and of Theorem~\ref{T : existence} in the case of nonconcentration.  
\end{proof}


\section{$L^p$ convergence after pseudo-scaling in the case of concentration} \label{S:Lp concentration}


To complete the proof of Theorem~\ref{T : existence} it remains to prove that a normalized extremizing sequence $\{f_n\}$ obeying \eqref{E:conc freq loc} may be ``blown up'' to reveal either an extremizing sequence for extension from the moment curve $\gamma_0$, or a matched pair of such sequences, depending on the parity of the exponents $l_i$.  More precisely, we will prove the following proposition.  

\begin{proposition}\label{P:concentration}
If $\{f_n\}$ is a normalized extremizing sequence obeying \eqref{E:conc freq loc}, then there exists a subsequence along which
$$
\lim_{n \to \infty} \|e^{ix_n\cdot\gamma}S_n f_n - f_{\delta_n}\|_{L^p(\lambda_\gamma)} = 0,
$$
for some $\{x_n \in \Rd\}$. Here $f_{\delta_n}$ is as in \eqref{E:fdelta}, for extremizers $f, g$ of $\scriptE_{\gamma_0}$, obeying $|\scriptE_{\gamma_0} f| \equiv |\scriptE_{\gamma_0} g|$.  
\end{proposition}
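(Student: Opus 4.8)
The plan is to run the lower bound construction behind Theorem~\ref{T : conc energy lower bound} in reverse: a concentrating extremizing sequence, pseudo-scaled about the point(s) of concentration, must resolve into one (in the ``neither'' case) or two (in the ``even'' and ``odd'' cases) extremizing sequences for $\scriptE_{\gamma_0}$, glued together exactly as in \eqref{E:fdelta}.

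\emph{Step 1: truncate and pseudo-scale.} After folding the dilations $S_n$ into $f_n$, use \eqref{E:conc freq loc} to pass to $f_n^R := f_n\rchi_{\{\dist(t,[1])\le R\delta_n\}}\rchi_{\{|f_n|\le R\delta_n^{-1/p}\}}$, so that $\|f_n - f_n^R\|_{L^p(\lambda_\gamma)} = o_R(1)$ uniformly in $n$; hence $\|f_n^R\|_{L^p(\lambda_\gamma)}\to 1$ and $\liminf_n\|\scriptE_\gamma f_n^R\|_q\ge B_{\gamma,p} - o_R(1)$. Split $f_n^R = f_{n,+}^R + f_{n,-}^R$, with $f_{n,\pm}^R$ supported in an $O(R\delta_n)$-neighbourhood of $\pm1$ (only the $+$ piece in the ``neither'' case). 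Rescaling about $\pm1$ via \eqref{E : scriptE(a, delta) preserves L^q}, the functions $g_n^R(s):=(\delta_n\lambda_\gamma(1))^{1/p}f_{n,+}^R(1+\delta_n s)$ and $h_n^R(s):=(\delta_n\lambda_\gamma(1))^{1/p}f_{n,-}^R(-1+\delta_n s)$ satisfy $|g_n^R|,|h_n^R|\lesssim R\rchi_{[-CR,CR]}$ and $\|g_n^R\|_{L^p(\lambda_{\gamma_{(1,\delta_n)}})}^p+\|h_n^R\|_{L^p(\lambda_{\gamma_{(-1,\delta_n)}})}^p=\|f_n^R\|_{L^p(\lambda_\gamma)}^p$, while $\gamma_{(\pm1,\delta_n)}\to\gamma_0$ by \eqref{E : local approx}. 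Using the antipodal identities $T(-1)^{-1}T(1)=-D_{-1}$ ($\gamma$ odd), $T(-1)^{-1}T(1)=D_{-1}$ ($\gamma$ even), $\gamma(-1)=\pm\gamma(1)$, and a change of variables in $x$ exactly as in the proof of Theorem~\ref{T : conc energy lower bound}, one obtains, for a suitable reflection (and complex conjugation, when $\gamma$ is odd) $\breve h_n^R$ of $h_n^R$ and a vector $v$ with $v\ne0$ when $\gamma$ is odd and $v=0$ when $\gamma$ is even,
\begin{equation*}
\|\scriptE_\gamma f_n^R\|_q=\bigl\|\scriptE_{\gamma_{(1,\delta_n)}}g_n^R(x)+e^{-iD_{\delta_n^{-1}}x\cdot v}\,\scriptE_{\gamma_{(-1,\delta_n)}}\breve h_n^R(-D_{-1}x)\bigr\|_{L_x^q}+o_n(1),
\end{equation*}
with the second term absent in the ``neither'' case.

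\emph{Step 2: profile decomposition and orthogonality.} For fixed $R$, apply Proposition~\ref{P : profiles} to $\{g_n^R\}$ (curves $\gamma_{(1,\delta_n)}$) and to $\{\breve h_n^R\}$ (reflected curves $\gamma_{(-1,\delta_n)}$), producing, along a subsequence, profiles $\{\phi^{j,R}\},\{\psi^{k,R}\}\subseteq L^p$ and translations $\{x_n^{j,R}\},\{y_n^{k,R}\}\subseteq\Rd$. By conclusions (iii), (v) of Proposition~\ref{P : profiles} and Lemma~\ref{L:op convergence}, modulo $o_R(1)+o_n(1)$ the quantity above equals the $L_x^q$ norm of $\sum_j\scriptE_{\gamma_0}\phi^{j,R}(x-x_n^{j,R})+e^{-iD_{\delta_n^{-1}}x\cdot v}\sum_k\scriptE_{\gamma_0}\psi^{k,R}(-D_{-1}x-y_n^{k,R})$. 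Conclusion (ii) makes the translates within each family $L^q$-asymptotically orthogonal; organising the profiles into ``clusters'' of pairs sharing a bounded effective centre --- each cluster containing at most one $\phi$ and at most one $\psi$ --- the only genuine interaction is, within a cluster, between one $\phi$-extension and one $\psi$-extension carrying the relative modulation $e^{-iD_{\delta_n^{-1}}x\cdot v}$, $D_{\delta_n^{-1}}\to\infty$. Applying Lemma~\ref{L : limiting constant} to each such pair (and, in the ``even'' case where $v=0$, the triangle inequality together with $\gamma(-1)=\gamma(1)$) yields
\begin{equation*}
\lim_n\|\scriptE_\gamma f_n^R\|_q^q\le\Psi_{p,q}\sum_c\bigl(\|\scriptE_{\gamma_0}\phi^{c}\|_q^p+\|\scriptE_{\gamma_0}\psi^{c}\|_q^p\bigr)^{q/p}+o_R(1),
\end{equation*}
with $\Psi_{p,q}$ replaced by $2^{q/p'}$ in the ``even'' case and a single-profile analogue in the ``neither'' case.

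\emph{Step 3: extract the structure and reassemble.} Using $\|\scriptE_{\gamma_0}\phi\|_q\le B_{\gamma_0,p}\|\phi\|_{L^p}$, conclusion (iv) of Proposition~\ref{P : profiles} (so each profile mass is bounded by $\liminf_n\|g_n^R\|_{L^p}$ or $\liminf_n\|h_n^R\|_{L^p}$, with $\|g_n^R\|_{L^p}^p+\|h_n^R\|_{L^p}^p\to1$), the triangle inequality in $\ell^{\tilde p/p}$ with $\tilde p=\max(p,p')$, and the convexity bound $\sum_c m_c^q\le(\sup_c m_c)^{q-\tilde p}\sum_c m_c^{\tilde p}$ with $m_c:=(\|\phi^c\|_{L^p}^p+\|\psi^c\|_{L^p}^p)^{1/p}$ (valid since $q>\tilde p$), the right side above is $\le\Psi_{p,q}B_{\gamma_0,p}^q+o_R(1)$. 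Combined with the lower bound $B_{\gamma,p}\ge B_{\gamma,p}^{\rm{conc}}$ of Theorem~\ref{T : conc energy lower bound}, this forces $B_{\gamma,p}=B_{\gamma,p}^{\rm{conc}}$ and saturates every inequality as $R\to\infty$: exactly one cluster survives, with exactly one profile $f$ on the $+$ side and one profile $g$ on the $-$ side, both extremizers of $\scriptE_{\gamma_0}$; the equality case of Lemma~\ref{L : limiting constant} (respectively of the triangle and power-mean inequalities when $\gamma$ is even) gives $|\scriptE_{\gamma_0}f|=|\scriptE_{\gamma_0}g|$ a.e.\ and pins the surviving mass split to a maximiser $\alpha$ of $\psi_{p,q}$, so that $\|f\|_{L^p}=\|g\|_{L^p}=(1+\alpha^p)^{-1/p}$ (with $g=f$, $\|f\|_{L^p}=2^{-1/p}$, in the even case; the ``neither'' case reduces to a localised version of Proposition~\ref{P:convergence}). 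Undoing the pseudo-scaling of Step~1 and choosing $x_n$ so that the surviving modulations line up, the profiles reconstitute $e^{ix_n\cdot\gamma}S_nf_n$ as $f_{\delta_n}+o_{L^p(\lambda_\gamma)}(1)$ at the level of the weak limits of the rescaled pieces; strict convexity of $L^p(\lambda_\gamma)$, exactly as in the proof of Proposition~\ref{P:convergence}, then upgrades this to the asserted norm convergence $\|e^{ix_n\cdot\gamma}S_nf_n-f_{\delta_n}\|_{L^p(\lambda_\gamma)}\to0$.

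\emph{Main obstacle.} The crux is Steps~2--3: correctly coupling the two antipodal pieces. The modulation $e^{-iD_{\delta_n^{-1}}x\cdot v}$ has frequency tending to infinity, but the functions to which one wants to apply Lemma~\ref{L : limiting constant} are themselves limits of profile sums carrying their own internal translations to infinity, so the order of limits (the cluster bookkeeping, and the interplay between $\ell^{\tilde p}$-summability of the profile masses and the triangle inequality used to bound $\sum_c m_c^{\tilde p}$) must be handled carefully; one then has to harvest the sharp equality information from Lemma~\ref{L : limiting constant} to simultaneously identify $f$, $g$, the optimal weight $\alpha$, and the equality $B_{\gamma,p}=B_{\gamma,p}^{\rm{conc}}$. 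The ``neither'' and ``even'' cases are comparatively soft; the odd case, where Lemma~\ref{L : limiting constant} does its work, is the substantive one.
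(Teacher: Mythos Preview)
Your outline is correct and follows essentially the same route as the paper's proof. The one structural difference is packaging: the paper formalizes your ``clustering'' step as a separate joint profile decomposition (Proposition~\ref{P:joint profile}), which applies Proposition~\ref{P : profiles} to $F_n^+$ and $F_n^-$ separately, then reindexes so that the two families share a common translation sequence $\{x_n^j\}$, and finally re-runs the generalized Br\'ezis--Lieb argument directly on the sum $\sum_\nu e^{ix\cdot\xi_n^\nu}\scriptE_{\gamma_n^\nu}F_n^\nu$ to obtain the $L^q$-decoupling into paired profiles (your clusters). Your Step~2 invokes conclusions (iii),(v) of Proposition~\ref{P : profiles} as if they already yield this decoupling for the combined sum; they do not quite, since (iii) decouples each family separately and says nothing about cross terms carrying the relative modulation. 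You correctly flag this bookkeeping as the main obstacle, and the fix is exactly what the paper does: once the pairing $x_n^{j,+}=x_n^{j,-}$ is arranged, the three-term splitting \eqref{expand and apply Brezis-Lieb} works verbatim with each summand replaced by the corresponding $\sum_\nu e^{ix\cdot\xi_n^\nu}(\cdots)$, and Br\'ezis--Lieb applies to the sum. After that, your Step~3 and the paper's argument coincide (the paper's ``even'' and ``odd'' endgames are your saturated chains of inequalities, with Lemma~\ref{L : limiting constant} supplying the sharp constant and equality case in the odd case).
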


The proof of Theorem~\ref{T : existence} will be complete once Proposition~\ref{P:concentration} is proved.  

The cases $\gamma$ even/odd and $\gamma$ neither even nor odd are sufficiently different from one another that we will take up each one separately, beginning with the simplest (neither).

\begin{proof}[Proof of Proposition~\ref{P:concentration} when $\gamma$ is neither even nor odd]
The argument is an adaptation of the proof of Proposition~\ref{P:convergence}, incorporating a blowup about $t=1$.  

We may assume that the dilation $S_n$ in \eqref{E:conc freq loc} equals the identity for all $n$. We define the truncation 
$$
f_n^R := f_n \rchi_{[1- R \delta_n, 1+R \delta_n] \cap \{|f_n| \leq R \delta_n^{-1/p}\}}
$$
and the blown up version, 
$$
F_n^R(t) := (\delta_n\lambda_\gamma(1))^{\frac 1p} f_n^R (1 + \delta_n t).  
$$
Recalling the notation introduced in \eqref{D : gamma(a, delta)} and \eqref{E : scriptE(a, delta) preserves L^q}, we see that 
\begin{equation} \label{E:FnR neither}
\begin{gathered}
\|F_n^R\|_{L^p(\lambda_{\gamma_{(1,\delta_n)}})} \equiv \|f_n^R\|_{L^p(\lambda_{\gamma})} = 1-o_R(1), \\
\|\scriptE_{\gamma_{(1,\delta_n)}}F_n^R\|_q \equiv \|\scriptE_\gamma f_n^R\|_q = B_{\gamma,p}-o_R(1),
\end{gathered}
\end{equation}
where the estimates on the right hand sides hold in the limit as $n \to \infty$.  

By \eqref{E : local approx}, we may apply Proposition~\ref{P : profiles} to $\{F_n^R\}$, with 
$$
\gamma_n:=\gamma_{(1, \delta_n)}, \qquad \gamma_L = \gamma_0.
$$
Recalling that $\lambda_{\gamma_0} \equiv 1$, (so $L^p(\lambda_{\gamma_0}) = L^p$), we let $\{\phi^{R,j}\}$ denote the $L^p$ functions arising in that decomposition.  By \eqref{E:FnR neither} and $B_{\gamma,p} \geq B_{\gamma_0,p}$ (by Theorem~\ref{T : conc energy lower bound}),
\begin{align*} 
B_{\gamma_0, p}^q - o_R(1) & = \liminf_n \|\scriptE_{\gamma_{(1,\delta_n)}}F_n^R\|_q^q 
\\
& \leq \sum_{j=1}^\infty \|\scriptE_{\gamma_0} \phi^{R,j}\|_q^q \leq B_{\gamma_0, p}^q \sum_{j = 1}^{\infty} \|\phi^{R,j}\|_p^q \leq B_{\gamma_0, p}^q \max_j \|\phi_j^R\|_p^{q - \tilde p}.
\end{align*}
By conclusion (iv) of Proposition~\ref{P : profiles} and \eqref{E:FnR neither}, for each sufficiently large $R$ there must exist a unique profile $\phi^{R,j}=:\phi^R \in L^p$ satisfying $\|\phi^R\|_p = 1 - o_R(1)$. 

By virtue of the uniform support condition, $\supp F_n^R \subseteq [-R,R]$ for all $n$, and \eqref{E : local approx}, the argument leading to \eqref{E : single profile for norm} adapts to the $F_n^R$ almost without any change.  For the convenience of the reader, we summarize that argument, updating the key estimates.  By strict convexity of $L^p$, 
\begin{equation} \label{E:FnR lim wk}
\limsup_n \|F_n^R - e^{-ix_n^R\cdot \gamma_n}\phi^R\|_p = o_R(1),
\end{equation}
so by \eqref{E:conc freq loc},
$$
\sup_{R_1,R_2 \geq R} \limsup_n \|e^{-ix_n^{R_1} \cdot \gamma_n}\phi^{R_1} - e^{-ix_n^{R_2}\cdot \gamma_n}\phi^{R_2}\|_p = o_R(1).
$$
By stationary phase and \eqref{E : local approx}, multiplication by $e^{i(x_n^{R_1}-x_n^{R_2})\cdot \gamma_n}$ tends weakly to zero if $|x_n^{R_1}-x_n^{R_2}| \to \infty$, and we obtain a contradiction analogously with \eqref{E:xnR-xnR'}.  Thus, after passing to a subsequence, $x_n^{R_1}-x_n^{R_2}$ converges for all sufficiently large $R_1,R_2$, so multiplication by $e^{i(x_n^{R_1}-x_n^{R_2})\cdot\gamma_n}$ converges in the strong operator topology on $L^p$.  By modifying $\phi^R$, we may thus upgrade \eqref{E:FnR lim wk} to 
$$
\lim_{n \to \infty} \|e^{ix_n\cdot \gamma_n}F_n^R - \phi^R\|_p = o_R(1).
$$
Having removed the dependence of the modulation on $R$, we may eliminate it (i.e., set $x_n \equiv 0$) by an initial modulation of $f_n$.  Moreover, by \eqref{E:conc freq loc}, $\{\phi^R\}$ is Cauchy in $R$, and hence as $R \to \infty$, converges in $L^p$ to some $f$ with $\|f\|_p = 1$. Undoing the pseudo-scaling by $\delta_n$ and using \eqref{E:conc freq loc} to return to use of $f_n$, 
$$
\lim_n \|f_n-f_{\delta_n}\|_{L^p(\lambda_\gamma)} = 0,
$$
with $f_{\delta_n}$ derived from $f$ as in \eqref{E:fdelta}. By \eqref{E:FnR neither}, we have that $\|\scriptE_{\gamma_0}f\|_q = B_{\gamma,p}$. Since $B_{\gamma_0,p} \leq B_{\gamma, p}$ (by Theorem~\ref{T : conc energy lower bound}), $f$ is an extremizer for $\scriptE_{\gamma_0}$ and $B_{\gamma_0,p} = B_{\gamma, p}$. This completes the proof of Proposition~\ref{P:concentration}, and thus of Theorem~\ref{T : existence} when $\gamma$ is neither odd nor even.
\end{proof}

We now turn to the cases when $\gamma$ is either even or odd, as the argument initially starts in the same way for both cases.  

\begin{proof}[Proof of Proposition~\ref{P:concentration} for $\gamma$ even/odd]
In the case that $\gamma$ is either even or odd, we may once again take the dilations $S_n$ in \eqref{E:conc freq loc} to equal the identity. We define 
\begin{align*}
f_n^R &:= f_n \rchi_{\{\dist(\cdot,[1])<R\delta_n, |f_n|<R\delta_n^{-1/p}\}}, \quad
F_n^{R,\pm}(s):=(\delta_n \lambda_\gamma(1))^{\frac 1p} (f_n^R \rchi_{I^{\pm}}) (\pm(1+\delta_ns)) 
\end{align*}
where $I^+ := (0, \infty)$ and $I^- := (- \infty, 0)$. Therefore,
\begin{equation} \label{E:fnR in terms of FnR}
f_n^R(t) = (\delta_n\lambda_\gamma(1))^{-1/p}\bigl[F_n^{R,+}\bigl(\tfrac{t-1}{\delta_n}\bigr) + F_n^{R,-}\bigl(\tfrac{-(t+1)}{\delta_n}\bigr)\bigr].  
\end{equation}
As in \eqref{E : scriptE(a, delta) preserves L^q} and further elementary manipulations,
\begin{equation}\label{E:EFnRpm}
\|\scriptE_\gamma f_n^R\|_q = \|\sum_{\nu \in \{+,-\}} e^{ix\cdot\xi_n^\nu}\scriptE_{\gamma_n^\nu}F_n^{R,\nu}(x)\|_{L^q_x},
\end{equation}
where
\begin{equation} \label{E:xin gamman}
\xi_n^{\pm}:=
\begin{cases}
0, &\text{$\gamma$ even;}\\
\pm D_{\delta_n^{-1}}T(1)^{-1}\gamma(1), &\text{$\gamma$ odd;}
\end{cases}
\qquad 
\gamma_n^{\pm}:=
\begin{cases}
\gamma_{(1,\delta_n)}, & \text{$\gamma$ even;}\\
\pm\gamma_{(1,\delta_n)}, &\text{$\gamma$ odd.}
\end{cases} 
\end{equation}
We note that $\gamma_n^\nu \to \gamma_0^\nu$ as $n \to \infty$, where
$$
\gamma_0^\nu:=\gamma_0, \:\text{if $\gamma$ is even; and} \: \gamma_0^\nu:=\nu\gamma_0, \:\text{if $\gamma$  is odd.}
$$ 
This leads us to consider the following generalization of Proposition~\ref{P : profiles}, which accommodates simultaneous concentration at antipodal points.  

\begin{proposition} \label{P:joint profile}
Let $q=\tfrac{d^2+d}2 p' > p$, $\gamma:\R \to \R^d$ an even or odd monomial, $R > 0$, and $\delta_n \searrow 0$.  Assume that the measurable sequences $\{F_n^\pm\}$ obey $|F_n^\pm| \leq R\rchi_{[-R,R]}$.  Then there exist $\{x_n^j \in \Rd\}$ and $\{\phi^{j,\pm}\in L^p\}$ such that along a subsequence, the following hold with 
$$
R_n^{J,\nu}:=F_n^\nu - \sum_{j=1}^J e^{-ix_n^j \cdot \gamma_n^\nu}\phi^{j,\nu}, \qquad \nu \in \{+,-\}.
$$
\begin{enumerate}[\rm(i')]
\item $\lim_{n \to \infty} \|x_n^j - x_n^{j'}\| = \infty$, $j \neq j'$;
\item  $e^{ix_n^j \cdot \gamma_n^\nu} F_n^\nu \rightharpoonup \phi^{j,\nu}$, weakly for all $j\in\N$, $\nu \in \{\pm\}$;
\item  $\lim_{n \to \infty} \bigl(\|\sum_{\nu \in \{\pm\}} e^{ix\cdot\xi_n^\nu}\scriptE_{\gamma_n^\nu}F_n^{\nu}(x)\|_{L^q_x}^q - \sum_{j=1}^J \|\sum_{\nu \in \{\pm\}}e^{ix\cdot\xi_n^\nu}\scriptE_{\gamma_0^\nu}\phi^{j,\nu}(x)\|_{L^q_x}^q- \|\sum_{\nu \in \{\pm\}} e^{ix\cdot\xi_n^\nu}\scriptE_{\gamma_n^\nu}R_n^{\nu}(x)\|_{L^q_x}^q\bigr) = 0$;
\item $\sum_{j=1}^\infty ( \|\phi^{j,+}\|_p^p+\|\phi^{j,-}\|_p^p)^{\tilde p/p} \leq \liminf (\|F_n^+\|_p^p+ \|F_n^-\|_p^p)^{\tilde p/p}$, \quad $\tilde p : = \max (p, p')$;
\item $\lim_{J \to \infty} \limsup_{n \to \infty} \|\scriptE_{\gamma_n}R_n^{J,\pm}\|_q = 0$.
\end{enumerate}
\end{proposition}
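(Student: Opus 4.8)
The plan is to transcribe the proof of Proposition~\ref{P : profiles} into the product setting, viewing the pair $\mathbf F_n:=(F_n^+,F_n^-)$ as a single element of the Banach space $L^p(\lambda_{\gamma_n^+})\oplus_p L^p(\lambda_{\gamma_n^-})$, on which a common spatial translation $x\mapsto x+x_n$ acts isometrically — it sends $\mathbf F\mapsto(e^{ix_n\cdot\gamma_n^+}F^+,e^{ix_n\cdot\gamma_n^-}F^-)$ — and for which the role of the extension operator is played by $\mathbf E_n\mathbf F:=\sum_{\nu}e^{ix\cdot\xi_n^\nu}\scriptE_{\gamma_n^\nu}F^\nu$, bounded into $L^q$ uniformly in $n$ by Theorem~\ref{T:extn polynomial}. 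Recall $\gamma_n^\nu\to\gamma_0^\nu$ and $\lambda_{\gamma_0^\nu}\equiv1$. As in the proof of Proposition~\ref{P : profiles}, I would first treat $p=2$, $q=q_2:=d^2+d$, and then deduce the general case.

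First I would establish the joint analogue of Lemma~\ref{L : positive weak lim}: if $\|F_n^\pm\|_{L^2(\lambda_{\gamma_n^\pm})}\le D$ and $\|\mathbf E_n\mathbf F_n\|_{q_2}\ge\eps$ for all $n$, then along a subsequence there is a common $x_n\in\Rd$ with $e^{ix_n\cdot\gamma_n^\nu}F_n^\nu\rightharpoonup\phi^\nu$ in $L^2$ and $(\|\phi^+\|_2^2+\|\phi^-\|_2^2)^{1/2}\gtrsim\eps(\eps/D)^C$. Since $\|\mathbf E_n\mathbf F_n\|_{q_2}\le\|\scriptE_{\gamma_n^+}F_n^+\|_{q_2}+\|\scriptE_{\gamma_n^-}F_n^-\|_{q_2}$, one of the two terms is $\ge\eps/2$ infinitely often; restricting to that single piece (and using $\scriptE_{-\gamma}g(\cdot)=\scriptE_{\gamma}g(-\cdot)$) the proof of Lemma~\ref{L : positive weak lim} applies verbatim: Lemma~\ref{L : large scriptE at a point}, Heine--Borel, and $\gamma_n^\nu\to\gamma_0^\nu$ produce a fixed interval $\tau$ of positive length and a point $x_n$ with $|\scriptE_{\gamma_n^\nu}(e^{ix_n\cdot\gamma_n^\nu}F_n^\nu\rchi_\tau)(0)|\gtrsim\lambda_{\gamma_0^\nu}(\tau)^{1/2}\eps(\eps/D)^C$, whence the corresponding weak limit $\phi^\nu$ obeys $\|\phi^\nu\|_2\gtrsim\eps(\eps/D)^C$ by H\"older. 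With this in hand the $p=2$ decomposition is produced exactly as in Proposition~\ref{P : profiles}: iterate, using the Pythagorean identity in $L^2\oplus_2 L^2$ for the energy decoupling (which gives (iv') with $\tilde p=2$, regardless of parity) and the generalized Br\'ezis--Lieb lemma \cite{Frank_etl_2016} for (iii'); the failure of (ii') is excluded exactly as before (if $x_n^j-x_n^{j'}$ converges for the least such $j$ and some $j'<j$, then $\phi^j=(\phi^{j,+},\phi^{j,-})$ must vanish, contradicting the choice of $x_n^j$); and (v') follows from the quantitative lower bound above.

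The delicate point, and the one I expect to be the main obstacle, is the Br\'ezis--Lieb step (iii') when $\gamma$ is odd. Writing $F_n^\nu=e^{-ix_n^1\cdot\gamma_n^\nu}\phi^{1,\nu}+R_n^{1,\nu}$, one has $\mathbf E_n\mathbf F_n=\mathcal A_n+\mathbf E_n\mathbf R_n^1$ with $\mathcal A_n(x)=\sum_\nu e^{ix\cdot\xi_n^\nu}\scriptE_{\gamma_n^\nu}\phi^{1,\nu}(x-x_n^1)$, where $\mathbf E_n\mathbf R_n^1\to0$ a.e.\ (from the weak convergence and the uniform compact supports) and $\scriptE_{\gamma_n^\nu}\phi^{1,\nu}\to\scriptE_{\gamma_0^\nu}\phi^{1,\nu}$ in $L^q$ by Lemma~\ref{L:op convergence}. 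When $\gamma$ is even, $\xi_n^\pm=0$ and $\gamma_n^\pm=\gamma_{(1,\delta_n)}$, so translation by $x_n^1$ is an $L^q$-isometry and renders $\mathcal A_n$ asymptotically the fixed function $\scriptE_{\gamma_0}(\phi^{1,+}+\phi^{1,-})$; the argument for (iii') is then that of Proposition~\ref{P : profiles} applied to $F_n^++F_n^-$. When $\gamma$ is odd, $\xi_n^-=-\xi_n^+=-D_{\delta_n^{-1}}T(1)^{-1}\gamma(1)$ escapes to infinity and translation by $x_n^1$ introduces the $n$-dependent relative phase $e^{2ix_n^1\cdot\xi_n^+}$ between the two antipodal pieces, so $\mathcal A_n$ no longer converges a.e. Here I would peel off one profile at a time and, after undoing the translation $x_n^j$ and factoring out $e^{ix\cdot\xi_n^+}$ in each antipodal component, appeal to the oscillation/averaging lemma \cite[Lemma 5.2]{Allaire92} — exactly as in Lemma~\ref{L : limiting constant} — to show that in the limit the two antipodal components decouple from each other and from the a.e.-vanishing remainder; together with the Br\'ezis--Lieb lemma this yields (iii'), and the constant phases $e^{ix_n^j\cdot\xi_n^+}$ are harmless because they only shift the internal $\theta$-average produced by that lemma. (As stressed in the introduction, for small $q$ there is no fixed $L^q$ dominating function for the $\scriptE_{\gamma_n^\nu}\phi^{j,\nu}$, so one cannot simply invoke dominated convergence; the averaging lemma is what replaces it.)

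Finally, the passage to general $p\ne2$ is the same reduction as in Proposition~\ref{P : profiles}: one keeps the profiles $\phi^{j,\nu}$ and sequences $x_n^j$ produced in the $p=2$ case; (i') and (ii') are immediate, (iii') follows by the identical Br\'ezis--Lieb argument, and (v') follows from (iii') for a suitable $q_1$ with $q$ between $q_1$ and $q_2$ by H\"older. For (iv') one introduces the smoothing maps $\pi_n^{j,\nu}f:=a*(b\,e^{ix_n^j\cdot\gamma_n^\nu}f)$ acting in each antipodal component, reduces by complex interpolation \cite{Stein56} and duality to the bound $\limsup_n\|(P_n^J)^\ast\|_{\ell^2(L^2\oplus_2 L^2)\to L^2\oplus_2 L^2}\le1$, and concludes via Young's inequality and a Schur-test argument, using (i') and $\lambda_{\gamma_0^\nu}\equiv1$ to obtain $\|\pi_n^{j,\nu}(\pi_n^{j',\nu})^\ast\|_{L^2\to L^2}\to0$ for $j\ne j'$; the $+$ and $-$ components never interact in this computation, so it is that of Proposition~\ref{P : profiles} carried out twice.
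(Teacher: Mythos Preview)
Your approach is correct but more elaborate than the paper's. The paper does not rebuild the profile decomposition in the product space; it simply applies Proposition~\ref{P : profiles} \emph{separately} to $\{F_n^+\}$ and $\{F_n^-\}$, obtaining profiles $\phi^{j,\pm}$ and parameter sequences $\{x_n^{j,\pm}\}$, and then performs a matching/reordering step: after a subsequence, for each $j$ either $x_n^{j,+}-x_n^{j',-}$ converges for a unique $j'$ or diverges for all $j'$; one reindexes (inserting zero profiles as needed) so that $x_n^{j,+}-x_n^{j,-}$ converges for every $j$, and then absorbs the limiting phase into $\phi^{j,-}$ to achieve $x_n^{j,+}=x_n^{j,-}=:x_n^j$. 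Conclusions (i'), (ii'), (v') are then inherited for free, (iv') follows from the triangle inequality in $\ell^{\tilde p/p}$ combined with (iv) of Proposition~\ref{P : profiles} in each sign, and (iii') follows by exactly the same Br\'ezis--Lieb argument as before. In particular, the odd case is \emph{not} more delicate: the generalized Br\'ezis--Lieb lemma \cite[Lemma~3.1]{Frank_etl_2016} only requires the profile piece $\sum_\nu e^{ix\cdot\xi_n^\nu}\scriptE_{\gamma_0^\nu}\phi^{1,\nu}$ to be \emph{dominated} by a fixed $L^q$ function (here $\sum_\nu|\scriptE_{\gamma_0^\nu}\phi^{1,\nu}|$), not to converge a.e., so the oscillating phases $e^{ix\cdot\xi_n^\nu}$ cause no trouble and the Allaire averaging lemma is not needed at this stage (it enters only later, via Lemma~\ref{L : limiting constant}, when one actually evaluates the limits of the $n$-dependent profile norms in (iii')). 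Your route---redoing the $p=2$ iteration, the interpolation/Schur argument for (iv'), etc., in $L^p\oplus_p L^p$---would work, but it reproves Proposition~\ref{P : profiles} rather than invoking it.
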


\begin{proof}[Proof of Proposition~\ref{P:joint profile}]
We apply Proposition~\ref{P : profiles} to each $F_n^{\pm}$ separately, obtaining profiles $\{\phi^{j,\pm}\}$ and sequences $\{x_n^{j,\pm}\}$ obeying the conclusions of that proposition, with the superscripts $\nu \in \{\pm\}$.  Conclusions (i), (ii), and (v) would immediately yield (i'), (ii'), and (v') if we had $x_n^{j,+}=x_n^{j,-}$ for all $n,j$, and deducing this (after a reordering and modification of the profiles) will be our first step.  

After passing to a subsequence, we may assume that for any pair $j,j'$, either $x_n^{j,+}-x_n^{j',-}$ converges or $|x_n^{j,+}-x_n^{j',-}|\to \infty$.  Given $j$, the case of convergence is only possible for at most one $j'$ (by conclusion (i)).  By reordering the profiles and inserting zeros, we may assume that $x_n^{j,+}-x_n^{j,-}$ converges for all $j$ and $|x_n^{j,\nu}-x_n^{j',\nu'}| \to \infty$ for all $j \neq j'$ and $\nu,\nu' \in \{+,-\}$.  Indeed, let $\scriptJ \subseteq \Z_{\geq 0}^2$ equal the set of all $(j,j')$ such that $x_n^{j,+}-x_n^{j',-}$ converges, all $(j,0)$ such that $|x_n^{j,+}-x_n^{j',-}| \to \infty$ for all $j'$, and all $(0,j')$ such that $|x_n^{j,+}-x_n^{j',-}| \to \infty$ for all $j$.  To $(j,j') \in \scriptJ$, we associate the pair $(\phi^{j,+},\phi^{j',-})$, where $\phi^{0,\pm}:=0$.  By conclusion (iii) of the original proposition, we may order the elements of $\scriptJ \ni (j_k,j_k')$ so that $\|\scriptE_{\gamma_0}\phi^{j_k,+}\|_q+\|\scriptE_{\gamma_0}\phi^{j_k,-}\|_q$ is decreasing in $k$.  Now we reindex (i.e., set $\tilde\phi^{k,\pm}:=\phi^{j_k,\pm}$ and then erase the tildes).  (See also \cite{Tautges}.)  Since $x_n^{j,+}-x_n^{j,-}$ converges, multiplication by $e^{-i(x_n^{j,-}-x_n^{j,+})\cdot \gamma_n^-}$ converges in the strong operator topology on $L^p$, so by modifying $\phi^{j,-}$ to absorb this limit, we may assume that $x_n^{j,+}=x_n^{j,-} =:x_n^j$ for all $n,j$, as desired.  It remains to establish conclusions (iii') and (iv').    

From the triangle inequality in $l^{\frac{\tilde p}{p}}$ (recall that $\tilde p \geq p$), then conclusion (iv) of Proposition~\ref{P : profiles}, 
\begin{align*}
\bigl(\sum_{j=1}^J \bigl(\|\phi^{j,+}\|_p^p+\|\phi^{j,-}\|_p^p\bigr)^{\frac{\tilde p}{p}}\bigr)^{\frac{p}{\tilde p}}
&\leq \bigl(\sum_{j=1}^J \|\phi^{j,+}\|_p^{\tilde p}\bigr)^{\frac{p}{\tilde p}} +\bigl(\sum_{j=1}^J \|\phi^{j,-}\|_p^{\tilde p}\bigr)^{\frac{p}{\tilde p}} 
\\
& \leq \liminf \|F_n^+\|_p^p+\|F_n^-\|_p^p,
\end{align*}
so conclusion (iv') holds.  

Finally, conclusion (iii') holds by precisely the same argument as that given for conclusion (iii) in Proposition~\ref{P : profiles}. For the convenience of the reader we record the updated version of \eqref{expand and apply Brezis-Lieb}, which is 
\begin{align*}
\sum_{\nu \in \{+,-\}} e^{ix\cdot\xi_n^\nu}\scriptE_{\gamma_n^\nu}F_n^{\nu}(x)
& =
\bigl[\sum_{\nu \in \{+,-\}} e^{ix\cdot\xi_n^\nu}\scriptE_{\gamma_0^\nu}\phi^{1,\nu}(x) \bigr]
\\
& + \bigl[\sum_{\nu \in \{+,-\}} e^{ix\cdot\xi_n^\nu}\scriptE_{\gamma_n^\nu}(e^{ix_n^1\cdot\gamma_n^\nu}F_n^\nu-\phi^{1,\nu})(x) \bigr]
\\
& + \bigl[\sum_{\nu \in \{+,-\}} e^{ix\cdot\xi_n^\nu}(\scriptE_{\gamma_n^\nu}\phi^{1,\nu}(x)-\scriptE_{\gamma_0^\nu}\phi^{1,\nu}(x)) \bigr].
\end{align*}
The first term in brackets on the right is bounded by a fixed $L^q$ function, the second converges to zero pointwise, and the third converges to zero in $L^q$ by Lemma~\ref{L:op convergence}. Applying the generalized Brezis-Lieb lemma to the above decomposition as in conclusion (iii) of Proposition~\ref{P : profiles} finishes the proof.   
\end{proof}

With Proposition~\ref{P:joint profile} proved, we return to the main thread of the proof of Proposition~\ref{P:concentration} in the even/odd case.

The lower bound in Theorem~\ref{T : conc energy lower bound} and our hypotheses that $\{f_n\}$ is $L^p(\lambda_\gamma)$-normalized, extremizing, and obeys \eqref{E:conc freq loc} and \eqref{E:EFnRpm} yield
\begin{equation} \label{E:EFnRpm lb}
(B_{\gamma,p}^{\rm{conc}})^q - o_R(1) = \liminf_{n \to \infty} \|\sum_{\nu \in \{+,-\}} e^{ix\cdot\xi_n^\nu}\scriptE_{\gamma_n^\nu}F_n^{R,\nu}(x)\|_{L^q_x}^q.  
\end{equation}
Applying Proposition~\ref{P:joint profile} to the $F_n^{R,\pm}$, we obtain profiles $\{\phi^{j,R,\pm} \in L^p\}$ and points $\{x_n^{j,R} \in \R^d\}$ which, by virtue of \eqref{E:EFnRpm lb}, conclusions (iii'), and (v') obey
\begin{equation} \label{E:Ephipm lb}
(B_{\gamma,p}^{\rm{conc}})^q - o_R(1)
\leq
\sum_{j=1}^\infty \liminf_{n \to \infty} \|\sum_{\nu \in \{+,-\}}e^{ix\cdot\xi_n^\nu}\scriptE_{\gamma_0^\nu}\phi^{j,R,\nu}(x)\|_{L^q_x}^q.
\end{equation}

Separately considering the cases $\gamma$ even and $\gamma$ odd leads to a dramatic simplification of \eqref{E:Ephipm lb}, and our argument now bifurcates.  

When $\gamma$ is even, \eqref{E:Ephipm lb} states that 
\begin{equation} \label{E:Ephipm lb e}
2^{\frac{q}{p'}}B_{\gamma_0,p}^q - o_R(1) \leq \sum_{j=1}^\infty\|\scriptE_{\gamma_0}(\phi^{j,R,+}+\phi^{j,R,-})\|_q^q.
\end{equation}
We may use continuity of $\scriptE_{\gamma_0}$, the triangle inequality, and H\"older's inequality to estimate a single summand on the right hand side of \eqref{E:Ephipm lb e}:
\begin{equation} \label{E:phiR extremizing e}
\begin{aligned}
&\|\scriptE_{\gamma_0}(\phi^{j,R,+}+\phi^{j,R,-})\|_q 
\leq B_{\gamma_0,p}\|\phi^{j,R,+}+\phi^{j,R,-}\|_p\\
&\qquad \leq B_{\gamma_0,p}\bigl(\|\phi^{j,R,+}\|_p + \|\phi^{j,R,-}\|_p\bigr)
\leq B_{\gamma_0,p}2^{\frac1{p'}}\bigl(\|\phi^{j,R,+}\|_p^p + \|\phi^{j,R,-}\|_p^p\bigr)^{\frac1p}.
\end{aligned}
\end{equation}
Inserting this upper bound into \eqref{E:Ephipm lb e} and using H\"older's inequality and $q>\tilde p$, then conclusion (iv') and the hypothesis that $f_n$ is $L^p(\lambda_\gamma)$-normalized,
\begin{align*}
&2^{\frac{q}{p'}}B_{\gamma_0,p}^q - o_R(1)
\leq 2^{\frac{q}{p'}}B_{\gamma_0,p}^q \sum_j \bigl(\|\phi^{j,R,+}\|_p^p + \|\phi^{j,R,-}\|_p^p\bigr)^{\frac qp}\\
&\qquad \leq 2^{\frac{q}{p'}}B_{\gamma_0,p}^q \sup_j \bigl(\|\phi^{j,R,+}\|_p^p + \|\phi^{j,R,-}\|_p^p\bigr)^{\frac{q-\tilde p}p} \limsup_{n \to \infty} (\|F_n^{R,+}\|_p^p+ \|F_n^{R,-}\|_p^p)^{\tilde p/p}\\
&\qquad\leq 2^{\frac{q}{p'}}B_{\gamma_0,p}^q \sup_j \bigl(\|\phi^{j,R,+}\|_p^p + \|\phi^{j,R,-}\|_p^p\bigr)^{\frac{q-\tilde p}p}.  
\end{align*}
Thus, for each $R$, there exists a large profile (whose superscript $j$ we omit) such that
$$
1-o_R(1) \leq \|\phi^{R,+}\|_p^p + \|\phi^{R,-}\|_p^p \leq \liminf \|F_n^{R,+}\|_p^p+ \|F_n^{R,-}\|_p^p \leq 1.
$$
Thus, 
$$
\liminf \|F_n^{R,\nu}\|_p - o_R(1) \leq  \|\phi^{R,\nu}\|_p \leq \liminf \|F_n^{R,\nu}\|_p, \qquad \nu \in\{\pm\},
$$
and so by exactly the same argument as in the ``neither'' case, we obtain a single ($R$ independent) sequence $\{x_n \in \Rd\}$ such that
\begin{equation} \label{E:FnR phinR e}
\lim_{n \to \infty} \|e^{ix_n\cdot\gamma_n^\nu}F_n^{R,\nu} - \phi^{R,\nu}\|_p = o_R(1), \quad \text{as $R \to \infty$.}
\end{equation}
By modulating the original sequence $\{f_n\}$, we may assume that $x_n =0$ for all $n$.  
As in the ``neither'' case, we may use \eqref{E:conc freq loc} and the triangle inequality to show that there exist $\phi^\pm \in L^p$ such that $\phi^{R,\nu} \to \phi^\nu$ in $L^p$ as $R \to \infty$, for $\nu \in \{\pm\}$.  Revisiting \eqref{E:Ephipm lb e} and \eqref{E:phiR extremizing e}, equality must hold at each step of the chain of inequalities
\begin{align*}
&2^{\frac{1}{p'}}B_{\gamma_0,p}^q \leq \|\scriptE_{\gamma_0}(\phi^{+}+\phi^{-})\|_q 
\leq B_{\gamma_0,p}\|\phi^{+}+\phi^{-}\|_p\\
&\qquad \leq B_{\gamma_0,p}\bigl(\|\phi^{+}\|_p + \|\phi^{-}\|_p\bigr)
\leq B_{\gamma_0,p}2^{\frac1{p'}}\bigl(\|\phi^{+}\|_p^p + \|\phi^{-}\|_p^p\bigr)^{\frac1p} = 2^{\frac1{p'}}B_{\gamma_0,p}.
\end{align*}
Taking the inequalities in order, $\phi^++\phi^-$ is an extremizer for $\scriptE_{\gamma_0}$, $\phi^-$ is a constant nonnegative multiple of $\phi^+$ (or vice versa), and $\|\phi^-\|_p=\|\phi^+\|_p$.  Thus $\phi^+ = \phi^- =:\phi$, an extremizer of $\scriptE_{\gamma_0}$.  We may now upgrade \eqref{E:FnR phinR e} to 
$$
\lim_{R \to \infty} \lim_{n \to \infty} \|F_n^{R,\nu}-\phi^\nu\|_p = 0,
$$
where recall the reduction (stated below \eqref{E:FnR phinR e}) to $x_n \equiv 0$.  Returning to \eqref{E:conc freq loc}, and looking to \eqref{E:fnR in terms of FnR} for assistance, we see that after rescaling, we do indeed have the expected $L^p$ convergence $\lim_{n \to \infty} \|f_n-f_{\delta_n}\|_p=0$, with $f:=\phi$ and $f_\delta$ as in \eqref{E:fdelta}.  

The case $\gamma$ odd concludes analogously, as we will now see. In this case, recalling \eqref{E : define Psi_p,q},  \eqref{E:Ephipm lb} may be restated as (observe that $\scriptE_{- \gamma_0} g = \overline{\scriptE_{\gamma_0}\overline{g}}$)
\begin{equation} \label{E:Ephipm lb o}
\Psi_{p,q}B_{\gamma_0,p}^q - o_R(1) \leq \sum_{j=1}^\infty \liminf_{n \to \infty} \|e^{2ix\cdot \xi_n^+}\scriptE_{\gamma_0}\phi^{j,R,+} + \overline{\scriptE_{\gamma_0}\overline{\phi^{j,R,-}}}\|_q^q.  
\end{equation}
Recalling Lemma~\ref{L : limiting constant} and expanding $x \cdot \xi_n^+ = (D_{\delta_n^{-1}} x)\cdot T(1)^{-1}\gamma(1)$, we may estimate a summand from the right hand side of \eqref{E:Ephipm lb o}:
$$
\liminf_{n \to \infty} \|e^{2ix\cdot \xi_n^+}\scriptE_{\gamma_0}\phi^{j,R,+} + \overline{\scriptE_{\gamma_0}\overline{\phi^{j,R,-}}}\|_q^q \leq \Psi_{p,q}B_{\gamma_0,p}^q (\|\phi^{j,R,+}\|_p^p + \|\phi^{j,R,-}\|_p^p)^{\frac qp},
$$
and arguing precisely as in the ``even'' case, we use $q>\tilde p$ to deduce existence of a large pair $\phi^{R,\pm}$, for each $R$.  Moreover, each $\{\phi^{R,\pm}\}$ is Cauchy and hence $L^p$ convergent as $R\to\infty$, with limit $\phi^\pm$.   We now have equality at each stage of the inequalities
\begin{align*}
&(\Psi_{p,q})B_{\gamma_0,p}^q = \liminf_{n \to \infty} \|e^{2ix\cdot \xi_n^+}\scriptE_{\gamma_0}\phi^{+} + \overline{\scriptE_{\gamma_0}\overline{\phi^{-}}}\|_q^q  \\
&\qquad
 \leq (\Psi_{p,q}) (\|\scriptE_{\gamma_0}\phi^{+}\|_q^p + \|\scriptE_{\gamma_0}\phi^{-}\|_q^p)^{\frac qp} 
\leq (\Psi_{p,q})B_{\gamma_0,p}^q(\|\phi^{+}\|_p^p + \|\phi^{-}\|_p^p)^{\frac qp}.  
\end{align*}
Considering the case of equality for each inequality in turn, (recalling Lemma~\ref{L : limiting constant}) $|\scriptE_{\gamma_0}\phi^+| = \alpha |\scriptE_{\gamma_0}\overline{\phi^-}|$ a.e.\ (or vice versa), for $\alpha \in [0,1]$ a maximum point of $\psi_{p,q}$, with $\phi^\pm$ extremal for $\scriptE_{\gamma_0}$.  Letting $f:=\phi^+$ and $g:=\overline{\phi^-}$ yields $\lim_{n \to \infty} \|f_n-f_{\delta_n}\|_p = 0$ (after modulation).  
\end{proof}
 




\bibliographystyle{plain}
\bibliography{res_monomial_curve}


\end{document}